\def\xkd{x_k^{\delta}}
\def\xk+1d{x_{k+1}^{\delta}}
\newtheorem{ipotesi}{Assumption}[section]
\newcommand{\beqn}[1]{\begin{equation}\label{#1}}
\newcommand{\eeqn}{\end{equation}}
\newcommand{\req}[1]{(\ref{#1})}
\newcounter{algo}[section]
\renewcommand{\thealgo}{\thesection.\arabic{algo}}
\newcommand{\algo}[3]{\refstepcounter{algo}
\begin{center}\begin{figure}[htbf]
\framebox[\textwidth]{
\parbox{0.95\textwidth} {\vspace{\topsep}
{\bf Algorithm \thealgo : #2}\label{#1}\\
\vspace*{-\topsep} \mbox{ }\\
{#3} \vspace{\topsep} }}
\end{figure}\end{center}}
\newcommand{\yd}{y^{\delta}}
\newcommand{\Fdk}{F(\xkd)-y^{\delta}}
\newcommand{\Jdk}{J(\xkd)}
\def\mklm{m_k^{\mathrm{LM}}}
\def\mktr{m_k^{\mathrm{TR}}}
\newcommand{\comment}[1]{}
\newcommand{\z}{\phantom{0}}
\begin{document}
%
%
\newpage

\title{On an adaptive regularization for ill-posed nonlinear systems
and its trust-region implementation
\thanks{Work partially supported by INdAM-GNCS, under the 2015 Projects 
``Metodi di regolarizzazione  per problemi di ottimizzazione
e applicazioni''}}
\author{S. Bellavia\footnotemark[2] and
B. Morini\footnotemark[2] and E. Riccietti \footnotemark[3]}

\renewcommand{\thefootnote}{\fnsymbol{footnote}}
\footnotetext[2]{Dipartimento  di Ingegneria Industriale, Universit\`a di Firenze,
viale G.B. Morgagni 40,  50134 Firenze,  Italia,
stefania.bellavia@unifi.it, benedetta.morini@unifi.it}

\renewcommand{\thefootnote}{\fnsymbol{footnote}}
\footnotetext[2]{Dipartimento  di Matematica e Informatica ``Ulisse Dini'', Universit\`a di Firenze,
viale G.B. Morgagni 67a,  50134 Firenze,  Italia,
elisa.riccietti@unifi.it}

\maketitle{}

\begin{abstract}
In this paper we address the stable numerical solution of nonlinear ill-posed systems by a trust-region 
method. We show that an appropriate choice 
of the trust-region radius gives rise to a procedure that has the
potential to approach a solution of the unperturbed system. 
This regularizing property  is shown theoretically and validated numerically.
\end{abstract}

{\small {\bf Keywords:} Ill-posed systems of  nonlinear equations,  regularization, nonlinear stepsize control,
trust-region methods.}

\section{Introduction}
Nonlinear systems modeling inverse problems are typically  ill-posed, in the sense that 
their solutions do not depend continuously on the data and their data are
affected by noise \cite{dsk, kns,vogel_book}.  In this work
we focus on  the stable  approximation of a solution
of  these problems. Procedures in the classes of Levenberg-Marquardt and trust-region
methods are discussed,  and  a  suitable version  of  
trust-region algorithm is shown to have regularizing properties both theoretically and numerically.
The underlying motivation for our study is twofold: most of the practical methods in literature  
have been designed for well-posed systems, see e.g., \cite{cgt, nw}, and thus are
unsuited in the context of  inverse problems; adaptation of existing procedures for handling ill-posed
problems,  carried out in the seminal papers \cite{gpp,hanke, hanke1,k,vogel,wy}, deserves further theoretical and numerical insights.
 
Let  
\begin{equation}\label{prob}
F(x)=y,
\end{equation}
with $F:\mathbb{R}^n\rightarrow \mathbb{R}^n$  continuously differentiable, be obtained from 
the discretization of a problem modeling an inverse problem. 
It is realistic to have  noisy data $y^\delta$ at disposal, satisfying
\begin{equation}\label{yd}
||y-y^\delta||_2\le \delta,
\end{equation}
for some positive  $\delta$.
Thus, in practice it is necessary to solve a problem  of the form 
\beqn{prob_noise}
F(x)=y^\delta,
\eeqn
and, due to ill-posedeness,  possible  solutions
may be arbitrarily far from those of the original problem.

In   \cite{hanke,hanke1}, Hanke supposed that 
an initial guess,   close enough to some  solution $x^\dagger$ of \req{prob}, is available.
Then, he proposed a {\em regularizing} Levenberg-Marquardt procedure which is able to compute 
a stable approximation $x_{k_*}^\delta$ to $x^\dagger$ or to some other solution of 
the unperturbed problem (\ref{prob})  close to $x^\dagger$. 
This task is achieved through a nonlinear stepsize control in the Levenberg-Marquardt procedure 
and the discrepancy principle as the stopping criterion, so that 
the iterative process is stopped at the iteration $k_*$ satisfying 
\begin{equation}\label{discrepance}
\|y^\delta-F(x_{k_*}^\delta)\|_2\le \tau \delta <\|y^\delta-F(x_{k}^\delta)\|_2,  \;\;\;0\le k< k_*,
\end{equation}
with $\tau>1$ appropriately chosen \cite{mor}.
Remarkably $x_{k_*}^\delta$ converges to a solution of (\ref{prob}) as $\delta$ tends to zero.

Further regularizing iterative methods have been proposed, including first-order methods and Newton-type methods.
Analogously to the Levenberg-Marquardt procedure proposed by Hanke,   instead of promoting  convergence to a solution of 
(\ref{prob_noise}), they form approximations
of increasing accuracy to  some solution of the unperturbed problem (\ref{prob}) 
until the discrepancy principle (\ref{discrepance}) is met. We refer to  \cite{dsk, kns} for the description and analysis of such methods.

The above mentioned regularizing  Levenberg-Marquardt method belongs to  the unifying framework of 
nonlinear stepsize control algorithms  for unconstrained optimization developed by Toint \cite{t}
and including trust-region methods  \cite{cgt}.
Therefore, elaborating on original ideas  by Hanke, we introduce and analyze a 
regularizing variant of the trust-region method. 
The main feature of our variant is the rule for selecting the trust-region radius which guarantees two properties.
First, it guarantees the same regularizing properties as the method by Hanke.
Second, as for standard trust-region procedure, it enforces a monotonic decrease  of
the value of  the function
\begin{equation}\label{mq}
 \Phi(x)=\frac{1}{2}\| y^\delta-F(x)\|_2^2,
\end{equation} 
at the iterates $\xkd$.
Convergence properties  are enhanced with respect to 
the regularizing Levenberg-Marquardt procedure in the following respects.
With exact data,  if  there exists an accumulation point  of the iterates
which solves  \req{prob},  then  any accumulation point of the sequence  solves \req{prob}.
With  noisy data, the methods have the potential to satisfy the discrepancy
principle \req{discrepance}. 
As for standard trust-region methods, these properties can be enhanced independently of the closeness of the initial guess 
to a solution of (\ref{prob}).

Our contribution  covers theoretical and practical aspects of the method proposed.
From a theoretical point of view, we propose the use of a trust-region radius
converging to zero as $\delta $ tends to zero. Trust-region methods with this distinguishing feature
have been proposed in several papers, see \cite{fan1, fan2,fan3, zw}, but none of such works 
was  either devised for ill-posed problems or applied to them; thus,
our study offers new insights on the potential of this choice for the trust-region radius.
Moreover, we have made a first attempt toward global convergent methods for 
ill-posed problems; to our knowledge, this topic has been considered only 
in a multilevel approach  proposed by  Kaltenbacher \cite{k}. 
Finally, local convergence analysis has been carried out  without the assumption (commonly made in literature) on 
the boundness of the inverse of the Jacobian $J$ of $F$, since it may not be fulfilled in the situation of ill-posedeness.
Taking into account that the standard convergence analysis of  trust-region  methods  always requires
the invertibility of $J$, our results  represent a progress in the theoretical investigation of convergence.
Concerning  numerical aspects,
we  discuss   an implementation of the regularizing trust-region method, 
and test its ability to approximate a solution of \req{prob}  in presence of noise.  Comparison 
with  a standard  trust-region scheme highlights the impact of the proposed trust-region radius choice on regularization.

The paper is organized as follows. In \S \ref{RLM} we describe the main features of  
the regularizing Levenberg-Marquardt method proposed by Hanke. 
In  \S \ref{defTR} we introduce our regularizing version  of the  trust-region methods 
and in \S \ref{local} we study the  local convergence properties. A comparative numerical analysis 
of all the procedures studied is done in \S \ref{exps}.

{\bf Notations.} We indicate the iterates of the procedures analyzed as $\xkd$; if the data are exact, $x_k$
may be used in alternative to $\xkd$. 
By $x_0^\delta=x_0$ we denote an initial guess which may incorporate a-priori knowledge of an exact solution.
The symbol $\|\cdot\|$ indicates the Euclidean norm.  The Jacobian matrix of $F$ is denoted as $J$.

\section{Regularizing Levenberg-Marquardt method  for ill-posed problems}\label{RLM}
We describe the regularizing version of the 
Levenberg-Marquardt method proposed  in \cite{hanke}
for solving (\ref{prob_noise}),
and analyze some issues for its practical implementation.

At  $k$-th iteration  of the Levenberg-Marquardt, 
given $\xkd\in \mathbb{R}^n$ and $\lambda_k>0$, let
\begin{equation}\label{lm}
\mklm(p)  =\frac{1}{2}\|\Fdk+\Jdk p\|^2+\frac{1}{2}\lambda_k \|p\|^2,
\end{equation}
be a quadratic model  around  $\xkd $ for  the function $\Phi$ in (\ref{mq}),  see \cite{l,m}.
The step  $p_k$ taken  minimizes $\mklm$, and  $x_{k+1}^\delta=\xkd+p_k$.
We observe that,   if $p(\lambda)$ is the solution of
\begin{equation}\label{system_1}
(B_k+\lambda I)p(\lambda) =-g_k,
\end{equation}
with $B_k=\Jdk^T\Jdk$ and $g_k=\Jdk^T(\Fdk)$, then  $p_k=p(\lambda_k)$.

If problem \req{prob_noise} is  ill-posed,
and  the scalars $\lambda_k$  are limited to promote convergence of  procedure, see 
\cite{m},  then  the solution of   \req{prob} may be significantly  misinterpreted 
\cite{g,kns, vogel_book}.  The regularizing Levenberg-Marquardt method \cite{hanke}  
attempts to approximate solutions of \req{prob}  by choosing
$\lambda_k$  as  the solution  $\lambda_k^q$  of the nonlinear scalar equation
\begin{equation} \label{seculare_q}
 \|\Fdk +\Jdk p(\lambda)\|= q \|\Fdk\|,
\end{equation}
for some fixed $q\in (0, 1) $.
Under suitable assumptions   discussed below,  $\lambda_k^q$ is uniquely
determined from (\ref{seculare_q}).

As for (\ref{seculare_q}), 
it is useful to establish relations between  
$\lambda$, $\|p(\lambda)\|$ and  $\|\Fdk+ \Jdk p(\lambda) \|$.
\vskip 5pt
\begin{lemma}\label{Sigma_decomp} \cite[Lemma 4.2]{bcgmt}
Suppose $\|g_k\|\neq 0$ and let $p(\lambda)$ be the minimum norm solution of 
\req{system_1} with  $\lambda \geq 0$.
Suppose furthermore that $\Jdk$ is of rank $\ell$ and its singular-value
decomposition  is given by $U_k \Sigma_k V_k^T$ where
$\Sigma_k$ is the diagonal matrix with entries  $\varsigma_1,\ldots,\varsigma_{n}$ on the diagonal. 
Then, denoting $r=(r_1,r_2,\dots,r_n)^T= U_k^T(\Fdk)$, we have that 
\begin{eqnarray}
& & \|p(\lambda)\|^2 
 =  \sum_{i=1}^\ell \frac{\varsigma_i^2 r_i^2}{(\varsigma_i^2 + \lambda)^2} ,
\label{Sigmanorms1}  \\
& & \|\Fdk +\Jdk p(\lambda) \|^2
=  \sum_{i=1}^\ell \frac{\lambda^2 r_i^2}{(\varsigma_i^2 + \lambda)^2}
  +\sum_{i=\ell+1}^n r_i^2.\label{Sigmanorms2} 
\end{eqnarray}
\end{lemma}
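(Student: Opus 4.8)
The plan is to diagonalize the linear system \req{system_1} by means of the singular-value decomposition and then read off both norms componentwise. First I would substitute $\Jdk = U_k \Sigma_k V_k^T$ to obtain $B_k = \Jdk^T\Jdk = V_k \Sigma_k^2 V_k^T$ and $g_k = \Jdk^T(\Fdk) = V_k \Sigma_k r$, where $r = U_k^T(\Fdk)$ and I use that $\Sigma_k$ is square diagonal, so $\Sigma_k^T = \Sigma_k$. Introducing the orthogonal change of variables $q = V_k^T p(\lambda)$ and multiplying \req{system_1} on the left by $V_k^T$, the coefficient matrix becomes $\Sigma_k^2 + \lambda I$, which is diagonal. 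The system then decouples into the scalar equations $(\varsigma_i^2 + \lambda)\, q_i = -\varsigma_i r_i$ for $i = 1, \ldots, n$.

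Next I would solve these scalar equations. For $i = 1, \ldots, \ell$ the singular values are positive, hence $\varsigma_i^2 + \lambda > 0$ for every $\lambda \geq 0$ and $q_i = -\varsigma_i r_i/(\varsigma_i^2 + \lambda)$ is uniquely determined. For $i = \ell+1, \ldots, n$ one has $\varsigma_i = 0$, so when $\lambda > 0$ the equation forces $q_i = 0$, whereas when $\lambda = 0$ it reads $0 = 0$ and $q_i$ is free; here the minimum-norm hypothesis on $p(\lambda)$ is what selects $q_i = 0$. In every case the null-space components of $q$ vanish.

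Finally I would compute the two norms using orthogonality. Since $V_k$ is orthogonal, $\|p(\lambda)\|^2 = \|q\|^2 = \sum_{i=1}^\ell \varsigma_i^2 r_i^2/(\varsigma_i^2+\lambda)^2$, which is \req{Sigmanorms1}. For the residual I would observe that $\Jdk p(\lambda) = U_k \Sigma_k q$, so $U_k^T(\Fdk + \Jdk p(\lambda)) = r + \Sigma_k q$, whose $i$-th entry equals $\lambda r_i/(\varsigma_i^2+\lambda)$ for $i \leq \ell$ and $r_i$ for $i > \ell$; taking the squared Euclidean norm and using that $U_k$ is orthogonal yields \req{Sigmanorms2}.

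The only delicate point is the rank-deficient case: when $\lambda = 0$ and $\ell < n$ the system \req{system_1} is singular and admits infinitely many solutions, so the identities are meaningful precisely for the minimum-norm solution, and one must check that this choice is exactly what annihilates the components $q_{\ell+1}, \ldots, q_n$. Everything else is a routine diagonal computation.
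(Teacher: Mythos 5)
Your proof is correct and coincides with the standard argument: the paper does not prove this lemma at all but simply quotes it from \cite[Lemma 4.2]{bcgmt}, and the proof given there is precisely your SVD diagonalization of \req{system_1}, including the observation that for $\lambda=0$ the minimum-norm requirement is what forces the null-space components $q_{\ell+1},\dots,q_n$ to vanish. Nothing is missing.
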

\vskip 5pt \noindent
Using this result,  the solution of (\ref{seculare_q}) is characterized as follows. 

\vskip 5pt
\begin{lemma}\label{sol_cond_q}
Suppose $\|g_k\|\neq 0$. Let $p(\lambda)$ be the minimum norm solution of 
\req{system_1} with  $\lambda \geq 0$,  $\mathcal{R}(\Jdk)^\bot$ be 
the orthogonal complement of  the range  $\mathcal{R}(\Jdk)$ of $\Jdk$,
and $P_k^\delta$ be the orthogonal projector onto $\mathcal{R}(\Jdk)^\bot$. 
Then
\begin{description}
\item{(i)}  Equation \req{seculare_q} is  not solvable if $\|P_k^\delta (\Fdk)\| > q\|\Fdk\|$.
\item{(ii)} If 
\begin{equation}\label{gamma_libro}
\|\Fdk+\Jdk(x^\dagger-\xkd)\|\le \frac{q}{\theta_k}\|\Fdk\|,
\end{equation}
for some $\theta_k>1$, then equation \req{seculare_q} has a unique solution  
$ \lambda_k^q $ such that
\begin{equation}\label{bound_lambda}
\lambda_{k}^{q}\in  \left( 0, \frac{q}{1-q}\|B_k\| \right].
\end{equation}
\end{description}
\end{lemma}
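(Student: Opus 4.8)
The plan is to reduce both parts to a monotonicity analysis of the scalar function
$$\phi(\lambda) := \|\Fdk + \Jdk p(\lambda)\|^2 = \sum_{i=1}^\ell \frac{\lambda^2 r_i^2}{(\varsigma_i^2+\lambda)^2} + \sum_{i=\ell+1}^n r_i^2,$$
whose closed form is supplied by \req{Sigmanorms2} in Lemma \ref{Sigma_decomp}. First I would record its two limiting values. Since $U_k$ is orthogonal, $\|r\|^2=\|\Fdk\|^2$, so $\lambda\to\infty$ gives $\phi(\lambda)\to\|\Fdk\|^2$; letting $\lambda\to 0^+$ kills the first sum and leaves the tail $\sum_{i=\ell+1}^n r_i^2$, which I would identify with $\|P_k^\delta(\Fdk)\|^2$ by observing that the left singular vectors $u_{\ell+1},\dots,u_n$ span $\mathcal{R}(\Jdk)^\bot$. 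Differentiating each summand shows $\frac{d}{d\lambda}\frac{\lambda^2}{(\varsigma_i^2+\lambda)^2}=\frac{2\lambda\varsigma_i^2}{(\varsigma_i^2+\lambda)^3}>0$ for $\lambda>0$, and because $\|g_k\|^2=\sum_{i=1}^\ell \varsigma_i^2 r_i^2\neq 0$ forces some $r_i\neq 0$ with $i\le\ell$, the map $\phi$ is a strictly increasing continuous bijection from $(0,\infty)$ onto $\left(\|P_k^\delta(\Fdk)\|^2,\ \|\Fdk\|^2\right)$.

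With this established, both parts become statements about where the target value $q^2\|\Fdk\|^2$ sits. For (i), the hypothesis $\|P_k^\delta(\Fdk)\|>q\|\Fdk\|$ says $q^2\|\Fdk\|^2<\phi(0^+)$, so monotonicity keeps $\phi(\lambda)>q^2\|\Fdk\|^2$ for every $\lambda>0$ and \req{seculare_q} admits no root. For the existence and uniqueness in (ii), the key observation is that $\Jdk(x^\dagger-\xkd)$ lies in $\mathcal{R}(\Jdk)$, so the projector annihilates it and $\|P_k^\delta(\Fdk)\|=\|P_k^\delta(\Fdk+\Jdk(x^\dagger-\xkd))\|\le\|\Fdk+\Jdk(x^\dagger-\xkd)\|$. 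Combined with \req{gamma_libro} and $\theta_k>1$ this gives $\|P_k^\delta(\Fdk)\|\le\frac{q}{\theta_k}\|\Fdk\|<q\|\Fdk\|$, placing $q^2\|\Fdk\|^2$ strictly inside the range of $\phi$; the intermediate value theorem together with strict monotonicity then delivers a unique root $\lambda_k^q$.

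The bound \req{bound_lambda} is the step I expect to require the most care, and I would obtain it by one well-chosen evaluation rather than by solving for $\lambda_k^q$. Since $\varsigma_i^2\le\|B_k\|$ for all $i$ and $\lambda/(\varsigma_i^2+\lambda)$ grows as $\varsigma_i$ shrinks, I can bound below
$$\phi(\lambda)\ge\frac{\lambda^2}{(\|B_k\|+\lambda)^2}\sum_{i=1}^\ell r_i^2+\sum_{i=\ell+1}^n r_i^2.$$
Setting $\Lambda=\frac{q}{1-q}\|B_k\|$ makes $\|B_k\|+\Lambda=\|B_k\|/(1-q)$, so $\Lambda/(\|B_k\|+\Lambda)=q$ and the right-hand side becomes $q^2\sum_{i=1}^\ell r_i^2+\sum_{i=\ell+1}^n r_i^2\ge q^2\|\Fdk\|^2$. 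Hence $\phi(\Lambda)\ge q^2\|\Fdk\|^2=\phi(\lambda_k^q)$, and monotonicity forces $\lambda_k^q\le\Lambda$, which is exactly \req{bound_lambda}. The crux of the whole argument is recognizing that the threshold $\frac{q}{1-q}\|B_k\|$ is precisely the value at which the worst-case ratio $\Lambda/(\|B_k\|+\Lambda)$ equals $q$.
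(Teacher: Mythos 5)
Your proof is correct and follows essentially the same route as the paper: both parts rest on the limits and strict monotonicity of $\|\Fdk+\Jdk p(\lambda)\|$ supplied by Lemma \ref{Sigma_decomp}, together with the observation that $\|P_k^\delta(\Fdk)\|\le\|\Fdk+\Jdk(x^\dagger-\xkd)\|$, so that \req{gamma_libro} places $q\|\Fdk\|$ strictly inside the attainable range and the intermediate value theorem gives a unique positive root. The only difference is in deriving \req{bound_lambda}: the paper uses the identity $\Fdk+\Jdk p(\lambda)=\lambda(\Jdk\Jdk^T+\lambda I)^{-1}(\Fdk)$ and solves the resulting inequality $q\ge\lambda_k^q/(\|B_k\|+\lambda_k^q)$ for $\lambda_k^q$, whereas you evaluate the SVD expression at $\Lambda=\frac{q}{1-q}\|B_k\|$ and invoke monotonicity --- the same worst-case estimate $\varsigma_i^2\le\|B_k\|$, packaged differently.
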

\begin{proof}
$(i)$ Equation (\ref{Sigmanorms2})  implies 
\begin{eqnarray*}
& & \lim_{\lambda \rightarrow 0} \|\Fdk +\Jdk p(\lambda) \|=\|P_k^\delta (\Fdk)\| , \\
& & \lim_{\lambda \rightarrow \infty} \|\Fdk +\Jdk p(\lambda) \|=\|\Fdk\|.
\end{eqnarray*}
Thus, since  $\|\Fdk +\Jdk p(\lambda) \|$ is  
monotonically increasing as a function of $\lambda$, 
we conclude that \req{seculare_q} does not admit solution if  
$\|P_k^\delta (\Fdk)\| > q\|\Fdk\|$.

$(ii)$ Trivially  $\|P_k^\delta (\Fdk)\|\le \|\Fdk+\Jdk(x-\xkd)\| $, for any $x$.
Hence,  for the monotonicity of $\|\Fdk +\Jdk p(\lambda) \|$,
if  \req{gamma_libro} holds, then equation  \req{seculare_q} admits a solution which
is positive and unique.  Finally, 
$$
\Fdk +\Jdk p(\lambda)=\lambda(\Jdk\Jdk^T+\lambda I)^{-1}(\Fdk),
$$
see e.g., \cite[Proposition 2.1]{hanke},  and by  \req{seculare_q}
\begin{eqnarray*}
q \|\Fdk\|
&=& \lambda_{k}^{q}\|(\Jdk\Jdk^T+\lambda_{k}^{q} I)^{-1}(\Fdk)\|\\
&\ge& \frac{\lambda_{k}^{q} }{\|\Jdk\|^2+\lambda_{k}^{q}} \|\Fdk\|,
\end{eqnarray*}
which yields \req{bound_lambda}.
\end{proof} 
\vskip 5pt

In \cite{hanke}, the   analysis of  resulting  Levenberg-Marquardt method   was made   under 
the subsequent assumptions  on the solvability of the problem \req{prob},
the Taylor remainder of  $F$, and the vicinity of
the initial guess $x_0$ to some  
solution $x^\dagger$ of \req{prob}. 
 \begin{ipotesi}\label{ALM} 
Given an initial guess $x_0$, there exist  positive $\rho$  and $c$ such that  
system (\ref{prob}) is solvable in  $ B_{\rho} (x_0)$, and 
\begin{equation} \label{condfond}
\quad  \|F(x)-F(\tilde x)-J(x)(x-\tilde x)\|\le c \|x-\tilde x\|\, \|F(x)-F(\tilde x)\|, \;\;\; x,\tilde x \in B_{2\rho} (x_0).
\end{equation}
\end{ipotesi}
\begin{ipotesi} \label{A5} Let $x_0$,  $c$ and  $\rho$ as in Assumption \ref{ALM}, 
$x^\dagger$ be a solution of \req{prob} and $x_0$ satisfy 
\begin{eqnarray}
\|x_0-x^\dagger\|&<& \min\left\{\frac{q}{c},  \rho\right \},   \hspace*{43pt}\mbox{if}\;\;\;\;\delta=0,\label{loc1}\\
\|x_0-x^\dagger\|& <&  \min\left\{\frac{q\tau-1}{c(1+\tau)},  \rho \right\}, \;\;\;\;\mbox{if}\;\;\;\;\delta>0 \label{loc2}
\end{eqnarray}
where $\tau>1/q$.
\end{ipotesi}
\vskip5pt
From Assumption \ref{ALM} it follows  that 
inequality (\ref{gamma_libro}) is satisfied for any $x_k^\delta$ belonging to  $ B_{2\rho} (x_0)$ 
and consequently there  exists a solution to (\ref{seculare_q}),  
see \cite[Theorems 2.2, 2.3]{hanke}.

 Under Assumptions \ref{ALM} and \ref{A5},  the approximations $x_{k^*}^\delta$ 
generated by the Levenberg-Marquardt method satisfy (\ref{discrepance}) and converge
to a solution of (\ref{prob}) as $\delta$ tends to zero.
\vskip 5pt
\begin{theorem}\label{conv_lm} 
Let Assumptions  \ref{ALM} and \ref{A5} hold and $ \xkd $ be the 
Levenberg-Marquardt  iterates  determined by using  \req{seculare_q}. 
For noisy data, suppose 
$k<k_*$  where $k_*$ is defined in \req{discrepance}. 
Then,   any iterate $x_k^\delta$ belongs to $B_{2\rho}(x_0)$. 
With exact data,   the sequence $\{x_k\}$ converges to a solution of \req{prob}.
With noisy data, the stopping criterion \req{discrepance} is satisfied after a finite number $k_*$ of iterations
and $\{x_{k^*}^\delta\}$ converges to a solution of \req{prob} as $\delta$ goes to zero.
\end{theorem}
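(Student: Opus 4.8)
The plan is to reduce all three assertions to a single \emph{fundamental inequality} controlling the error $e_k=\xkd-x^\dagger$. Throughout I write $r_k=\Fdk+\Jdk p_k$ for the linearized residual, so that the secular equation \req{seculare_q} reads $\|r_k\|=q\|\Fdk\|$. The starting point is the normal‑equation form \req{system_1}: from $(B_k+\lambda_k^q I)p_k=-g_k$ I obtain $\Jdk^T r_k=-\lambda_k^q p_k$, hence $p_k=-(\lambda_k^q)^{-1}\Jdk^T r_k$. This converts the inaccessible inner product $\langle e_k,p_k\rangle$ into $-(\lambda_k^q)^{-1}\langle \Jdk e_k,r_k\rangle$, a quantity expressed through residuals rather than the unknown error.

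The key step is to replace $\Jdk e_k$ by data via the tangential cone condition \req{condfond}. Setting $w_k=F(x^\dagger)-F(\xkd)-\Jdk(x^\dagger-\xkd)$, condition \req{condfond} (valid once $\xkd\in B_{2\rho}(x_0)$) gives $\|w_k\|\le c\|e_k\|\,\|F(\xkd)-y\|$, so that $\Jdk e_k=\Fdk+(y^\delta-y)+w_k$. Substituting this and using a short algebra based on \req{system_1} and the secular relation (which yields the identity $\langle \Fdk,r_k\rangle=q^2\|\Fdk\|^2+\lambda_k^q\|p_k\|^2$), I expect to reach
$$\|e_{k+1}\|^2-\|e_k\|^2\le -\|p_k\|^2-\frac{2q}{\lambda_k^q}\|\Fdk\|\Big(q\|\Fdk\|-\|y^\delta-y\|-c\|e_k\|\,\|F(\xkd)-y\|\Big).$$
In the exact case $y^\delta=y$ the bracket reduces to $\|\Fdk\|(q-c\|e_k\|)$, positive whenever $c\|e_k\|<q$; in the noisy case, using $\|\Fdk\|>\tau\delta$ for $k<k_*$ and $\|y^\delta-y\|\le\delta$, the bracket is bounded below by a positive multiple of $\|\Fdk\|$ exactly when $c\|e_k\|<(q\tau-1)/(\tau+1)$. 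These thresholds are precisely \req{loc1} and \req{loc2}, so an induction on $k$ shows $\|e_k\|$ strictly decreases while the residual is nonzero; since then $\|e_k\|\le\|e_0\|<\rho$ and $\|x^\dagger-x_0\|<\rho$, each iterate satisfies $\xkd\in B_{2\rho}(x_0)$, which both closes the induction and proves the first assertion.

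For exact data, summing the fundamental inequality gives $\sum_k\|p_k\|^2<\infty$ and $\sum_k(\lambda_k^q)^{-1}\|F(x_k)-y\|^2<\infty$; since the iterates lie in a bounded set, $\|J\|$ is bounded, so \req{bound_lambda} bounds $\lambda_k^q$ from above and forces $\|F(x_k)-y\|\to0$. It remains to upgrade this monotone, residual‑vanishing sequence to a convergent one, and I expect this Cauchy argument to be the main obstacle: for $j\ge l\ge k$ one writes $\|x_j-x_k\|\le\|x_j-x_l\|+\|x_k-x_l\|$, expands each $\|x_m-x_l\|^2$ about $x^\dagger$, and controls the cross terms $\langle e_m,p_i\rangle$ by the same residual estimates as above, choosing $l$ to minimize the residual over $k\le m\le j$; monotonicity of $\|e_k\|$ together with $\|F(x_k)-y\|\to0$ then drives the bound to $0$, so $\{x_k\}$ is Cauchy and its limit $x_*$ satisfies $F(x_*)=y$ by continuity.

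For noisy data, the strict decrease with the lower bound $\|\Fdk\|>\tau\delta$ shows each step reduces $\|\xkd-x^\dagger\|^2$ by at least a fixed positive multiple of $\delta^2$ (again using the upper bound on $\lambda_k^q$); since the error is nonnegative, only finitely many such steps occur, so $k_*<\infty$ and \req{discrepance} holds. Finally, to prove $x_{k_*}^\delta$ converges to a solution as $\delta\to0$, I would combine the exact‑data limit with a stability argument: along any sequence $\delta\to0$, either $k_*(\delta)$ stays bounded — whence continuity of $F$ and of the iteration map give convergence to an exact iterate of zero residual — or $k_*(\delta)\to\infty$, in which case the uniform Cauchy estimate of the previous paragraph, applied to the noisy iterates and passed to the limit, forces $x_{k_*}^\delta\to x_*$. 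This regularization‑limit step and the Cauchy estimate are the two genuinely delicate points; the remaining manipulations are the routine algebra sketched above.
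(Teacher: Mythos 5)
Your proposal is correct and takes essentially the same route as the paper, whose own proof of this theorem is simply the citation ``See \cite{hanke}, Theorem 2.2 and Theorem 2.3'': your fundamental inequality is precisely Hanke's monotonicity estimate --- the same one the paper later restates as \req{monoton_1_new} with $\theta_k=q/(c\|x_k-x^\dagger\|)$ (exact data) and as \req{monoton_1_new_noise} with $\theta_k=q\tau/(1+c(1+\tau)\|x^\dagger-x_k^\delta\|)$ (noisy data), citing \cite[Proposition 4.1]{kns}. Your induction keeping the iterates in $B_{2\rho}(x_0)$, the summability and minimal-residual Cauchy argument, the $O(\delta^2)$-per-step decrease giving $k_*<\infty$, and the bounded/unbounded $k_*(\delta)$ dichotomy for the limit $\delta\to 0$ are exactly the steps of Hanke's proof.
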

\begin{proof}
See  \cite{hanke}, Theorem 2.2 and Theorem 2.3.
\end{proof}
\vskip 5pt

Let us focus on a specific issue concerning the implementation of the method which, 
to our knowledge,   has not been addressed either in \cite{hanke} or in related papers.
The   numerical solution of (\ref{seculare_q}) requires the application of a root-finder method
and Newton method  is the most efficient procedure,   though in general it requires
the knowledge of an accurate approximation to the solution. On the other hand,
nonlinear equations which are  monotone and convex (or concave) 
on some interval  containing the root are particularly suited to an application of Newton method,
see e.g. \cite[Theorem  4.8]{henrici}. Equation (\ref{seculare_q}) does not have such properties
but it can be replaced  by an equivalent equation  with   strictly decreasing  and concave function in $[\lambda_k^q, \infty)$; 
thus, Newton method applied to the reformulated equation converges globally to
$\lambda_k^q$ whenever the initial guess overestimates such a root.
\vskip 5pt
\begin{lemma}\label{sec_cond_q}
Suppose $\|\Fdk\|\neq 0$, and that (\ref{seculare_q}) has   positive solution $\lambda_k^q$. Let
\begin{equation}\label{sec_new}
\psi(\lambda)=\frac{\lambda}{\|\Fdk +\Jdk p(\lambda) \|}-\frac{\lambda}{q\|\Fdk\|} =0.
\end{equation}
Then, Newton method applied to \req{sec_new}
converges monotonically and globally to the  root $\lambda_k^q$ of (\ref{seculare_q}) 
for any initial guess in the interval $[\lambda_k^q, \infty)$.
\end{lemma}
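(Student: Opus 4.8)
The plan is to verify that $\psi$ satisfies the hypotheses of the classical global-convergence criterion for Newton's method quoted in \cite[Theorem 4.8]{henrici}: on the interval $[\lambda_k^q,\infty)$ the function $\psi$ should be strictly decreasing, concave, and should vanish exactly at $\lambda_k^q$. Once these three facts are established, starting Newton's method at any $\lambda\ge\lambda_k^q$ produces iterates that decrease monotonically and stay bounded below by $\lambda_k^q$: because the graph of a concave function lies below each of its tangents, the Newton iterate never crosses the root, so the iterates form a monotone sequence in $[\lambda_k^q,\infty)$ converging to the unique zero $\lambda_k^q$.

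First I would record that $\psi$ has the same positive zero as \req{seculare_q}. Writing $h(\lambda)=\|\Fdk+\Jdk p(\lambda)\|$ and $c=q\|\Fdk\|>0$, one has $\psi(\lambda)=\lambda\,(c-h(\lambda))/(c\,h(\lambda))$, so for $\lambda>0$ the sign of $\psi$ agrees with that of $c-h(\lambda)$. By Lemma \ref{Sigma_decomp} the map $h$ is strictly increasing with $h(\lambda_k^q)=c$; hence $\psi>0$ on $(0,\lambda_k^q)$, $\psi(\lambda_k^q)=0$, and $\psi<0$ on $(\lambda_k^q,\infty)$.

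The core of the argument is concavity. Since $\psi''=s''$ for $s(\lambda)=\lambda/h(\lambda)$, it suffices to show $s$ is concave. Put $g(\lambda)=h(\lambda)^2/\lambda^2$, so that $s=g^{-1/2}$; from \req{Sigmanorms2},
\[
g(\lambda)=\sum_{i=1}^{\ell}\frac{r_i^2}{(\varsigma_i^2+\lambda)^2}+\frac{1}{\lambda^2}\sum_{i=\ell+1}^{n} r_i^2,
\]
which is a sum of terms $a_j/(b_j+\lambda)^2$ with $a_j\ge0$ and $b_j\ge0$. A direct differentiation gives $s''=g^{-5/2}\big(\tfrac34(g')^2-\tfrac12\,g\,g''\big)$, so that $s''\le0$ is equivalent to $g\,g''\ge\tfrac32(g')^2$. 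Writing $u_j=b_j+\lambda>0$ and expanding, one finds
\[
g\,g''-\tfrac32(g')^2=3\sum_{j,k}\frac{a_j a_k\,(u_j-u_k)^2}{u_j^4\,u_k^4}\ge0.
\]
This identity is the one computation I expect to be the main obstacle; it follows after symmetrizing the double sum over index pairs $(j,k)$, and the point is precisely that the cross terms reassemble into a sum of squares, so the inequality holds unconditionally. Consequently $s$, and hence $\psi$, is concave on $(0,\infty)$.

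Finally, strict monotonicity on $[\lambda_k^q,\infty)$ follows from concavity and the sign pattern: if $\psi'(\lambda_k^q)\ge0$, then the tangent inequality $\psi(\lambda)\le\psi'(\lambda_k^q)(\lambda-\lambda_k^q)$ would give $\psi\le0$ on $(0,\lambda_k^q)$, contradicting $\psi>0$ there; thus $\psi'(\lambda_k^q)<0$, and since $\psi'$ is non-increasing we obtain $\psi'<0$ on all of $[\lambda_k^q,\infty)$. With $\psi$ strictly decreasing, concave, and vanishing only at $\lambda_k^q$, the quoted Newton criterion applies and yields the asserted monotone global convergence.
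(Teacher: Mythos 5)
Your proof is correct, and it shares the paper's overall skeleton: recast \req{seculare_q} as $\psi(\lambda)=0$, show $\psi$ is concave on $(0,\infty)$ and strictly decreasing on $[\lambda_k^q,\infty)$, and then invoke the Newton convergence criterion of \cite[Theorem 4.8]{henrici}. However, both key ingredients are established by genuinely different arguments. For concavity, the paper writes $\lambda/\|\Fdk+\Jdk p(\lambda)\|$ in the form \req{rapl} and simply cites \cite[Lemma 2.1]{cgt_ima}; you instead give a self-contained proof, reducing concavity of $s=g^{-1/2}$ to the inequality $g\,g''\ge\tfrac{3}{2}(g')^2$ and verifying it via the symmetrized identity $g\,g''-\tfrac{3}{2}(g')^2=3\sum_{j,k}a_ja_k(u_j-u_k)^2/(u_j^4u_k^4)$, which I checked and is exact (coefficient included), so your computation fully replaces the external reference. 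For the strict decrease, the paper differentiates $\psi$ and uses the secular equation to obtain the explicit value $\psi'(\lambda_k^q)=-\frac{(\lambda_k^q)^2}{\|\Fdk+\Jdk p(\lambda_k^q)\|^3}\sum_{i=1}^{\ell}\frac{r_i^2\varsigma_i^2}{(\varsigma_i^2+\lambda_k^q)^3}<0$; you avoid any differentiation at the root by a softer argument: since $\psi>0$ on $(0,\lambda_k^q)$ (from monotonicity of $\|\Fdk+\Jdk p(\lambda)\|$) and $\psi(\lambda_k^q)=0$, concavity forces $\psi'(\lambda_k^q)<0$, whence $\psi'<0$ on $[\lambda_k^q,\infty)$. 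Your route is more elementary and self-contained; the paper's is shorter on the page (given the citation) and yields the explicit derivative at the root, which carries quantitative information about the Newton iteration. One minor remark: your claim that $\|\Fdk+\Jdk p(\lambda)\|$ is strictly increasing tacitly uses $\|g_k\|\neq 0$ (the hypothesis of Lemma \ref{Sigma_decomp}, needed so that some $r_i\neq 0$ with $i\le\ell$), which is not listed among the hypotheses of this lemma; the paper's own proof relies on the same implicit assumption, so this is an inherited imprecision rather than a new gap.
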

\begin{proof}
Trivially, solving   (\ref{seculare_q}) is equivalent to finding the  positive  root of the equation
(\ref{sec_new}). We  now show that  $\psi(\lambda)$ is  strictly decreasing in $[\lambda_k^q, \infty)$ and concave on $(0, \infty)$. By (\ref{Sigmanorms2}),  
\begin{equation}\label{rapl}
\frac{\lambda}{\|\Fdk +\Jdk p(\lambda) \|}= \left (\, \sqrt{\sum_{i=1}^l \left(\frac{r_i}{\zeta_i^2+\lambda}\right)^2+
\sum_{i=l+1}^n \left(\frac{r_i}{\lambda}\right)^2} \, \right)^{-1},
\end{equation}
and this function is concave on $(0, \infty)$, cfr. \cite[Lemma 2.1]{cgt_ima}. Thus,  $\psi$ is concave   on $(0, \infty)$
and trivially $\psi'(\lambda)$ is strictly decreasing. 

Now we show that $\psi'(\lambda_k^q)$ is negative; thus, using the monotonicity of $\psi'(\lambda)$, we 
get  that  $\psi(\lambda)$ is strictly decreasing in $[\lambda_k^q, \infty)$. Differentiation of $\psi(\lambda)$ and 
(\ref{seculare_q}) give 
\begin{eqnarray*}
\psi'(\lambda_k^q) &=&  \frac{(\lambda_k^q)^3}{\|\Fdk +\Jdk p(\lambda_k^q) \|^3}  
\left(\sum_{i=1}^l \frac{r_i^2}{(\zeta_i^2+\lambda_k^q)^3}+ \sum_{i=l+1}^n \frac{r_i^2}{(\lambda_k^q)^3} \right)
\, -\, \frac{1}{q\|\Fdk\|}
\\ &=& \frac{(\lambda_k^q)^2}{\|\Fdk +\Jdk p(\lambda_k^q) \|^3} 
\left (\,  \sum_{i=1}^l \frac{r_i^2\lambda_k^q}{(\zeta_i^2+\lambda_k^q)^3}+ \sum_{i=l+1}^n \left(\frac{r_i}{\lambda_k^q} \right)^2  
-\frac{\|\Fdk +\Jdk p(\lambda_k^q) \|^2}{(\lambda_k^q)^2}\, \right).
\end{eqnarray*}
Moreover, using (\ref{rapl}), it holds
\begin{eqnarray*}
\psi'(\lambda_k^q) &=& \frac{(\lambda_k^q)^2}{\|\Fdk +\Jdk p(\lambda_k^q) \|^3} 
\left (\,   \sum_{i=1}^l \frac{r_i^2\lambda_k^q}{(\zeta_i^2+\lambda_k^q)^3}
- \sum_{i=1}^l \left(\frac{r_i}{\zeta_i^2+\lambda_k^q}\right)^2 \, \right) \\ 
&=& -
\frac{(\lambda_k^q)^2}{\|\Fdk +\Jdk p(\lambda_k^q) \|^3} 
  \sum_{i=1}^l \frac{r_i^2\zeta_i^2}{(\zeta_i^2+\lambda_k^q)^3}, \\
\end{eqnarray*}
i.e. $\psi'(\lambda_k^q)$ is negative. 

The  claimed convergence  of Newton method 
follows from  results  on univariate concave functions  given in \cite[Theorem  4.8]{henrici}.
\end{proof} 
\vskip 5pt
For the practical evaluation of $\psi(\lambda)$ and $\psi'(\lambda)$ we refer to \cite{cgt,more}.

Since  (\ref{seculare_q}) may have not solution,
Hanke  observed that  it may be replaced by 
\begin{equation}\label{RQ}
 \|F(\xkd)-y^\delta +J(\xkd) p_k\| \ge   q \|F(\xkd)-y^{\delta}\|,
\end{equation}
later denoted as the {\em  q-condition}, \cite[Remark p. 6]{hanke} but 
this criterion  was not analyzed or employed in numerical computation.
Since our aim is to tune $\lambda_k$ in view of global convergence, while   
preserving regularizing properties, in the next section  we allow more flexibility in  
its selection and  design a trust-region method based on  condition (\ref{RQ}).
 
\section{A regularizing trust-region method}\label{defTR}
Trust-region methods are globally convergent approaches where 
the stepsize between two successive iterates is determined via a
nonlinear control mechanism \cite{cgt}.
At a generic iteration $k$ of a trust-region method,  the step 
$p_k$  solves
\begin{equation}\label{TR}
\begin{array}{l}
\displaystyle \min_p \mktr(p)=\frac{1}{2}\|\Fdk+\Jdk p\|^2,\\
\mbox { s.t. } \|p\|\le \Delta_k,  
\end{array}
\end{equation}
where $\Delta_k$ is a given positive trust-region radius.
If $\|g_k\|\neq 0$ then $p_k$ solves \req{TR} if and only if 
it satisfies \req{system_1} for some nonnegative  $\lambda_k$  such that 
\begin{equation}\label{sol_tr}
\lambda_k(\|p_k\|-\Delta_k)=0.
\end{equation}
Therefore, whenever the minimum norm solution $p^+$ of 
$$
B_kp=-g_k,
$$
satisfies  $\|p^+\|\le \Delta_k$, then  the scalar 
$\lambda_k$ is null and $p_k=p(0)$ solves   \req{TR}. 
Otherwise,  the step $p_k=p(\lambda_k)$ is a Levenberg-Marquardt step. 
If $\|p_k\|=\Delta_k$, then the  trust-region is said to be active.

Starting from an arbitrary initial guess, 
trust-region methods generate a sequence of iterates such 
that the value of $\Phi$ in (\ref{mq})  is monotonically decreasing and 
this feature is enforced by an adaptive  choice of  the  radius $\Delta_k$.
Specifically,  let $p_k$ be the trust-region step and 
\begin{equation}\label{rho_tr}
\rho_k=\frac{ared(p_k)}{pred(p_k)},
\end{equation}
be  the ratio between the  achieved $ared(p_k)$ and predicted $pred(p_k$) reductions given by
\begin{eqnarray}
& & ared(p_k)=  \Phi(\xkd)-\Phi(\xkd+p_k), \label{ared}\\
& &  pred(p_k)= \Phi(\xkd)-\mktr(p_k) . \label{pred}
\end{eqnarray}
Then, the trust region radius is reduced if $\rho_k$ is below some small positive
threshold; otherwise it is left unchanged or enlarged \cite{cgt}.

Since trust-region steps  and Levenberg-Marquardt steps have the same
form  \req{system_1}, trust-region and Levenberg-Marquardt methods
fall into a single unifying framework which can be 
used for their   description and theoretical analysis 
\cite{cgt_ima,  more, t}. 
Due to such a strict connection, we elaborate on the regularizing   Levenberg-Marquardt 
described in the previous section,
and introduce a regularizing variant 
of trust-region  methods for  solving ill-posed problems.

The standard trust-region strategy is modified so that the nonlinear stepsize
control  enforces both the monotonic reduction of $\Phi$ and the  $q$-condition (\ref{RQ}).
To this end,  we first  characterize the parameters $\lambda$ such that $p(\lambda)$ satisfies (\ref{RQ}).
\vskip 5pt
\begin{lemma}\label{sol_q}
Assume $\|g_k\|\neq 0$. Let $p(\lambda)$ be the minimum norm solution of 
\req{system_1} with  $\lambda \geq 0$ and $P_k^\delta$ be the orthogonal projector onto $\mathcal{R}(\Jdk)^\bot$.
Then, equation \req{RQ} is satisfied for any $\lambda\ge 0$ 
whenever  
\begin{equation}\label{cond_lambda}
\|P_k^\delta (\Fdk)\| \ge  q\|\Fdk\|.
\end{equation} 
Otherwise, it 
is satisfied for any $\lambda\ge \lambda_k^q$ where $\lambda_k^q$ satisfies \req{bound_lambda}. 
\end{lemma}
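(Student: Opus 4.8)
The plan is to analyze the monotonic behavior of the residual norm $\|\Fdk + \Jdk p(\lambda)\|$ as a function of $\lambda$, using the explicit formula \req{Sigmanorms2} from Lemma \ref{Sigma_decomp}. The key observation is that the $q$-condition \req{RQ} asks for $\|\Fdk + \Jdk p(\lambda)\| \ge q\|\Fdk\|$, and since the left-hand side is monotonically increasing in $\lambda$ (as already established in the proof of Lemma \ref{sol_cond_q}), I would reduce the problem to comparing the limiting values of this residual at $\lambda = 0$ and $\lambda \to \infty$ against the threshold $q\|\Fdk\|$.

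First I would recall the two limits computed in part $(i)$ of Lemma \ref{sol_cond_q}: as $\lambda \to 0$ the residual tends to $\|P_k^\delta(\Fdk)\|$, and as $\lambda \to \infty$ it tends to $\|\Fdk\|$. Since $q \in (0,1)$, the upper limit $\|\Fdk\|$ always exceeds $q\|\Fdk\|$, so the $q$-condition is satisfied for all sufficiently large $\lambda$ in every case. The dichotomy then hinges entirely on the value at $\lambda = 0$. When \req{cond_lambda} holds, i.e. $\|P_k^\delta(\Fdk)\| \ge q\|\Fdk\|$, monotonicity immediately gives $\|\Fdk + \Jdk p(\lambda)\| \ge \|P_k^\delta(\Fdk)\| \ge q\|\Fdk\|$ for every $\lambda \ge 0$, which is the first assertion.

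For the complementary case, when $\|P_k^\delta(\Fdk)\| < q\|\Fdk\|$, the continuous monotone increasing function crosses the level $q\|\Fdk\|$ exactly once, at the unique root $\lambda_k^q$ of the secular equation \req{seculare_q}; for $\lambda \ge \lambda_k^q$ the residual is at least $q\|\Fdk\|$, establishing the second assertion. The existence, uniqueness, and the bound \req{bound_lambda} on this root are furnished directly by part $(ii)$ of Lemma \ref{sol_cond_q}, so I would simply invoke that result rather than rederive the estimate.

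The main obstacle, such as it is, lies in handling the boundary behavior cleanly: I must be careful that the limit at $\lambda = 0$ of the residual equals $\|P_k^\delta(\Fdk)\|$ rather than something larger, and that $p(\lambda)$ is understood as the \emph{minimum norm} solution of \req{system_1} so that the singular-value expansion \req{Sigmanorms2} applies verbatim even when $\Jdk$ is rank-deficient. Because $q < 1$ strictly, the strict-versus-nonstrict distinction in \req{cond_lambda} causes no difficulty at the threshold, and the whole argument is essentially a one-variable monotonicity statement built on top of Lemmas \ref{Sigma_decomp} and \ref{sol_cond_q}.
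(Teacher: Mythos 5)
Your proof is correct and takes essentially the same route as the paper: the paper's own proof of this lemma is the single line ``The claims easily follow from Lemma \ref{sol_cond_q}'', and your argument simply spells out that deduction via the monotonicity of $\|\Fdk+\Jdk p(\lambda)\|$, its limits $\|P_k^\delta(\Fdk)\|$ at $\lambda=0$ and $\|\Fdk\|$ as $\lambda\to\infty$, and the intermediate-value argument locating $\lambda_k^q$. One small point of rigor: since the hypothesis \req{gamma_libro} of Lemma \ref{sol_cond_q}(ii) is not literally available here, the bound \req{bound_lambda} should be cited as coming from the \emph{proof} of that part (which derives it using only the secular equation \req{seculare_q} itself), exactly the implicit reading the paper also relies on.
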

\begin{proof}
The claims easily follow from   Lemma \ref{sol_cond_q}.
\end{proof} 
\vskip 5pt
Now we are ready to characterize the size of the trust-region radius guaranteeing (\ref{RQ}). 
\vskip 5pt
\begin{lemma}\label{delta_qcond}
Let $p_k$ solve the trust-region problem (\ref{TR}). If 
\begin{equation}\label{radius}
\Delta_k\le \frac{1-q}{\|B_k\|}\|g_k\|,
\end{equation}
then $p_k$ satisfies the $q$-condition (\ref{RQ}).
\end{lemma}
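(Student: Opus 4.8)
The plan is to reduce the claim to the characterization already obtained in Lemma \ref{sol_q} by translating the bound \req{radius} on the radius into a bound on the Levenberg--Marquardt parameter associated with the trust-region step. Throughout I assume $\|g_k\|\neq 0$, which is in fact forced by \req{radius}: otherwise its right-hand side would vanish while $\Delta_k>0$. Recall that the trust-region solution can be written as $p_k=p(\lambda_k)$ for some $\lambda_k\geq 0$ satisfying \req{system_1} and \req{sol_tr}, and that it is feasible, so $\|p_k\|=\|p(\lambda_k)\|\leq\Delta_k$.

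First I would dispose of the easy case. If $\|P_k^\delta (\Fdk)\|\geq q\|\Fdk\|$, then Lemma \ref{sol_q} guarantees that \req{RQ} holds for \emph{every} $\lambda\geq 0$, in particular for $\lambda_k$, and there is nothing more to prove; note the radius restriction is not even needed here. So assume instead $\|P_k^\delta (\Fdk)\| < q\|\Fdk\|$. Then Lemma \ref{sol_q} provides the root $\lambda_k^q>0$ obeying \req{bound_lambda} and asserts that $p(\lambda)$ satisfies \req{RQ} precisely for $\lambda\geq\lambda_k^q$. Hence it suffices to establish $\lambda_k\geq\lambda_k^q$.

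The decisive step is a lower bound on $\|p(\lambda_k^q)\|$. Since $B_k=\Jdk^T\Jdk$ is symmetric positive semidefinite and $\lambda_k^q>0$, the matrix $B_k+\lambda_k^q I$ is invertible with $\|B_k+\lambda_k^q I\|=\|B_k\|+\lambda_k^q$; then from $g_k=-(B_k+\lambda_k^q I)\,p(\lambda_k^q)$ and the bound $\lambda_k^q\leq\frac{q}{1-q}\|B_k\|$ of \req{bound_lambda} I would obtain
\begin{equation*}
\|p(\lambda_k^q)\|\;\geq\;\frac{\|g_k\|}{\|B_k\|+\lambda_k^q}\;\geq\;\frac{\|g_k\|}{\|B_k\|+\frac{q}{1-q}\|B_k\|}\;=\;\frac{1-q}{\|B_k\|}\,\|g_k\|.
\end{equation*}
Combining this with the radius hypothesis \req{radius} and feasibility yields $\|p(\lambda_k)\|\leq\Delta_k\leq\|p(\lambda_k^q)\|$.

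To finish I would invoke the monotonicity recorded in Lemma \ref{Sigma_decomp}: by \req{Sigmanorms1} the map $\lambda\mapsto\|p(\lambda)\|$ is strictly decreasing on $[0,\infty)$ when $\|g_k\|\neq 0$, so $\|p(\lambda_k)\|\leq\|p(\lambda_k^q)\|$ forces $\lambda_k\geq\lambda_k^q$, which is exactly what \req{RQ} requires by Lemma \ref{sol_q}. Equivalently, one may bypass the comparison of parameters and use that \req{Sigmanorms2} makes the residual $\|\Fdk+\Jdk p(\lambda)\|$ an increasing function of $\lambda$, hence a decreasing function of $\|p(\lambda)\|$, so $\|p_k\|\leq\|p(\lambda_k^q)\|$ directly gives $\|\Fdk+\Jdk p_k\|\geq\|\Fdk+\Jdk p(\lambda_k^q)\|=q\|\Fdk\|$. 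The only mildly delicate points are setting up the case distinction driven by Lemma \ref{sol_q} and reading the monotonicity directions correctly; the norm estimate above is the essential ingredient and is routine once \req{bound_lambda} is in hand.
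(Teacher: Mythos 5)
Your proposal is correct and follows essentially the same route as the paper's own proof: the case split via Lemma \ref{sol_q}, the lower bound $\|p(\lambda_k^q)\|\ge \|g_k\|/\|B_k+\lambda_k^q I\| \ge (1-q)\|g_k\|/\|B_k\|$ obtained from \req{system_1} and \req{bound_lambda}, the chain $\|p_k\|\le\Delta_k\le\|p(\lambda_k^q)\|$, and the monotonicity of $\|p(\lambda)\|$ to conclude $\lambda_k\ge\lambda_k^q$. Your added touches (noting that \req{radius} forces $\|g_k\|\neq 0$, making explicit $\|B_k+\lambda_k^q I\|=\|B_k\|+\lambda_k^q$, and the alternative finish via the monotone residual in \req{Sigmanorms2}) are fine refinements but do not change the argument.
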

\begin{proof}
By Lemma \ref{sol_q} we know that 
the $q$-condition  is satisfied either for $\lambda\ge 0$, or 
for  any $\lambda\ge \lambda_k^q$.
In the former case, the claim trivially holds. In the latter case, by
\req{system_1}   it follows
$$
\|p(\lambda_k^q)\| \ge  \frac{\|g_k\|}{\|B_k+\lambda_k^qI\|},
$$
and  by \req{bound_lambda} it holds
$$
\|B_k+\lambda_k^qI\| \le \frac{\|B_k\|}{1-q}.
$$
 By construction  $\|p_k\|\le \Delta_k$, and  if (\ref{radius}) holds then we obtain
$$
\|p_k\|=\|p(\lambda_k)\|\le \frac{1-q}{\|B_k\|}\|g_k\| \le  \frac{\|g_k\|}{\|B_k+\lambda_k^qI\|} \le  
\|p(\lambda_k^q)\|.
$$
Since $\|p(\lambda)\|$ is monotonically decreasing, it follows
$\lambda_k\ge\lambda_k^q$ and condition \req{RQ} is satisfied. 
\end{proof}
\vskip 5pt
We stress that the bound \req{radius} provides a practical rule for choosing the trust-region radius that guarantees the satisfaction of  
the q-condition \req{RQ}. Conversely,  in papers  \cite{wy} and \cite{zw}, where  trust-region  for ill-posed problems are studied,
such a condition is respectively assumed to be satisfied and  explicitely enforced rejecting the step whenever it does not hold.

The   result in Lemma \ref{delta_qcond} suggests the trust-region iteration described in Algorithm \ref{algoTR}.
We distinguish between the parameters 
needed in the case of exact-data and the parameters required with noisy-data.

\algo{TRalgo}{$k$th iteration of the regularizing Trust-Region method for problem  \req{prob_noise}}{
Given $\xkd$,    $\eta\in (0,1)$, $\gamma\in (0,1)$, $0<C_{\min}<C_{\max}$. 
\vskip 1pt
Exact-data:  given $\delta=0$, $q\in (0,1)$.
\vskip 1pt
Noisy-data: given $\delta>0$, $\tau>1$, $q>1/\tau$.
\\[1ex]
\begin{description}
\item[1.]  Compute $B_k=\Jdk^T\Jdk$ and $g_k=\Jdk^T(\Fdk)$.
\item[2.]  Choose $\displaystyle\Delta_k\in \left.\left [C_{\min}\|g_k\|, 
\min\left\{ C_{\max},  \frac{1-q}{\|B_k\|}  \right\}\|g_k\|\right]\right.$.
\item[3.]  Repeat\\ 
 3.1 Compute the solution $p_k$ of the trust-region  problem (\ref{TR}).
 \\
 3.2  Compute $\rho_k$ given in \req{rho_tr}--\req{pred}.
\\
3.3 If $\rho_k< \eta$, then set $\Delta_{k}=\gamma \Delta_k$.\\
 \hspace*{-25pt} Until $\rho_k\ge \eta$.
\item [4.] Set $\xk+1d=\xkd+p_k$.
\end{description}
}\label{algoTR}

Due to well know properties of trust-region methods,   
Algorithms \ref{algoTR} is  well-defined, i.e. if $\|g_k\|\ne 0$ 
the step $p_k$ is found  within a finite number of attempts, provided that the following Assumption is met \cite{cgt}.  
\vskip 5pt
\begin{ipotesi}\label{A2} 
There exists a positive constant $\kappa_J$ such that
$$
\|J(x)\|\le \kappa_J,
$$
 for any  $x$ belonging to the level set 
${\cal L}=\{ x\in \mathbb{R}^n \, \mbox{ s.t. }\, \Phi(x)\le \Phi(x_0)\}$.
\end{ipotesi}
\vskip 5pt

Global convergence  of the trust-region method is stated in the following theorem;
we refer to \cite[Theorem 11.9]{nw}  for the proof. 
\begin{theorem} 
Suppose that Assumption \ref{A2} holds and $J$ is Lipschitz continuous on $\mathbb{R}^n$. 
Then, the sequence $\{\xkd\}$ generated by  Algorithm \ref{algoTR} satisfies
\begin{equation}\label{conv_grad}
\lim_{k\rightarrow \infty} \nabla\Phi(\xkd)=\lim_{k\rightarrow \infty}\|\Jdk^T(\Fdk)\|=0.
\end{equation} 
\end{theorem}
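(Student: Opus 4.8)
The plan is to run the classical global-convergence argument for trust-region methods, tailored to the radius management of Algorithm~\ref{algoTR}, in which the inner \emph{Repeat} loop guarantees that every outer iteration ends with an accepted step satisfying $\rho_k\ge\eta$ (well-definedness of this loop is already granted before Assumption~\ref{A2}). The backbone is the Cauchy-decrease estimate: since $p_k$ solves \req{TR} exactly, it produces at least the reduction attainable along the steepest-descent direction inside the trust region, so that
\[
pred(p_k)\ge \tfrac{1}{2}\|g_k\|\min\left\{\Delta_k,\frac{\|g_k\|}{\|B_k\|}\right\}.
\]
First I would record the consequences of the hypotheses. Acceptance forces $ared(p_k)=\rho_k\,pred(p_k)>0$, so $\{\Phi(\xkd)\}$ is monotonically decreasing and every iterate lies in the level set $\mathcal{L}$; Assumption~\ref{A2} then yields $\|\Jdk\|\le\kappa_J$, $\|B_k\|\le\kappa_J^2$, and $\|\Fdk\|=\sqrt{2\Phi(\xkd)}\le\sqrt{2\Phi(x_0)}$. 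Combined with Lipschitz continuity of $J$, this makes $\nabla\Phi(x)=J(x)^T(F(x)-y^\delta)$ Lipschitz on $\mathcal{L}$ with some constant $L$.

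Next I would quantify the gap between model and function. Writing $\mktr(p)=\Phi(\xkd)+g_k^Tp+\tfrac{1}{2}p^TB_kp$ and using $|\Phi(\xkd+p)-\Phi(\xkd)-g_k^Tp|\le\tfrac{L}{2}\|p\|^2$, I get
\[
|ared(p_k)-pred(p_k)|=|\mktr(p_k)-\Phi(\xkd+p_k)|\le\tfrac{1}{2}(L+\kappa_J^2)\|p_k\|^2\le\tfrac{1}{2}(L+\kappa_J^2)\Delta_k^2.
\]

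The two main steps then follow. Summing the accepted decreases $\Phi(\xkd)-\Phi(\xk+1d)=ared(p_k)\ge\eta\,pred(p_k)$ and using that $\{\Phi(\xkd)\}$ is bounded below and convergent gives $\sum_k\|g_k\|\min\{\Delta_k,\|g_k\|/\|B_k\|\}<\infty$, so this product tends to zero. To reach \req{conv_grad} I argue by contradiction: if $\|g_k\|\ge\varepsilon>0$ along a subsequence, I must bound the accepted radius away from zero there. This is the crux. If the inner loop did not shrink, then $\Delta_k\ge C_{\min}\|g_k\|\ge C_{\min}\varepsilon$ by the lower endpoint in Step~2. If it did, the radius $\hat\Delta=\Delta_k/\gamma$ rejected just before acceptance has $\rho_k<\eta$; splitting on whether $\hat\Delta\ge\|g_k\|/\|B_k\|$, in the first case $\hat\Delta\ge\varepsilon/\kappa_J^2$, while in the second the Cauchy and discrepancy bounds give $1-\eta<1-\rho_k\le(L+\kappa_J^2)\hat\Delta/\|g_k\|$, whence $\hat\Delta>(1-\eta)\varepsilon/(L+\kappa_J^2)$. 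Either way $\Delta_k=\gamma\hat\Delta$ exceeds a fixed positive multiple of $\varepsilon$, so $\|g_k\|\min\{\Delta_k,\|g_k\|/\|B_k\|\}$ is bounded below by a positive constant along the subsequence, contradicting convergence of the series. Hence $\|g_k\|=\|\Jdk^T(\Fdk)\|\to0$.

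The main obstacle I anticipate is precisely this radius lower bound: one must keep the case analysis on $\min\{\Delta_k,\|g_k\|/\|B_k\|\}$ clean and ensure that the resulting constant depends only on $\eta,\gamma,C_{\min},\kappa_J,L$ and not on $k$. A secondary point requiring care is that the upper endpoint of the Step~2 interval involves $(1-q)/\|B_k\|$ rather than a fixed constant, so I would verify that this nonstandard initialization does not interfere with the lower-bound argument, which only uses the lower endpoint $C_{\min}\|g_k\|$ together with the contraction factor $\gamma$.
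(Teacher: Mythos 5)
Your proof is correct, but it does genuinely more than the paper, because the paper offers no proof at all here: it simply refers to \cite[Theorem 11.9]{nw}. That cited theorem covers the standard radius management (the radius is carried over between iterations and possibly enlarged), whereas Algorithm \ref{algoTR} re-initializes $\Delta_k$ in Step 2 at every outer iteration, so the citation strictly speaking requires exactly the kind of adaptation you carry out. Your argument isolates the only three features of the algorithm that matter: the lower endpoint $C_{\min}\|g_k\|$ of the Step 2 interval, the geometric contraction by $\gamma$ in the repeat loop, and the acceptance test $\rho_k\ge\eta$; and you correctly note that the nonstandard upper endpoint $(1-q)\|g_k\|/\|B_k\|$ plays no role. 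A pleasant byproduct of the per-iteration reset is that your case analysis yields a pointwise bound of the form $\Delta_k\ge c\,\|g_k\|$ with $c$ depending only on $\eta,\gamma,C_{\min},\kappa_J,L$, so you obtain the full limit \req{conv_grad} directly, without the additional work that separates the $\liminf$ result from the $\lim$ result in the classical theory (Theorems 4.5 versus 4.6 in \cite{nw}); this is in fact the same radius lower bound that the paper itself derives later, in part (iii) of Theorem \ref{lambdaTR_bound}, via \cite[Theorem 6.3.1]{cgt}. One point to tighten: you assert that $\nabla\Phi$ is Lipschitz ``on $\mathcal{L}$'', but your model--function gap estimate must be applied along segments $[\xkd,\xkd+\hat p]$ whose endpoints are \emph{rejected} trial points, which need not lie in $\mathcal{L}$; since every trial step satisfies $\|\hat p\|\le C_{\max}\|g_k\|\le C_{\max}\kappa_J\sqrt{2\Phi(x_0)}$, all such segments stay in a fixed bounded enlargement of $\mathcal{L}$, on which Lipschitz continuity of $J$ on $\mathbb{R}^n$ together with Assumption \ref{A2} gives uniform bounds on $\|J\|$ and $\|F-y^\delta\|$, hence a uniform Lipschitz constant $L$ for $\nabla\Phi$ there. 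This is a one-line repair, not a gap in the approach.
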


We observe that assumption on   Lipschitz continuity of $J$ is made  also in the paper \cite{k}.
By construction, the  sequence $ \{\|\Fdk\|\}$ is monotonically decreasing
and bounded below by zero; hence it is convergent. 
Equation \req{conv_grad} implies that any accumulation point of the sequence $\{\xkd\}$ 
is a stationary point  of $\Phi$. 
As for exact data, we conclude that  if there exists an accumulation point of $\{x_k\}$ solving \req{prob}, 
then any  accumulation point of the sequence solves \req{prob}. 
In the case of noisy data, if the value of $\Phi$ at some accumulation point of  $\{\xkd\}$ 
is below the scalar $\tau \delta$, then there exists an iterate   $x_{k _*}^\delta$ 
such that the discrepancy principle is met.

It remains to show the behaviour of the  sequences generated
by   Algorithm \ref{algoTR} when, for some $k$,  
$\xkd$ is sufficiently close to a solution $x^\dagger$ of \req{prob}.
For instance,
this occurs with exact data when 
the accumulation points of  $\{x_k\}$ solve \req{prob} 
and $k$ is sufficiently large. In the next section we show that
the trust-region method described in  Algorithm \ref{algoTR} shares
the same  local regularizing properties as the regularizing Levenberg-Marquardt method.


\section{Local behaviour of the trust-region method}\label{local}
We analyze the local properties of the  trust-region method 
under the same assumptions made for the Levenberg-Marquardt method.
Hence, we suppose that there exists an iteration index $\bar k$ such that the iterate $x_{\bar k}^\delta$ satisfies the following assumptions
that are the counterpart of  Assumptions \ref{ALM} and \ref{A5} for the Levenberg-Marquardt method.

\begin{ipotesi}\label{ATR} 
 Suppose that for  some iteration index $\bar k$
there exist  positive $\rho$  and $c$ such that  
system (\ref{prob}) is solvable in  $ B_{\rho} (x_{\bar k}^\delta)$, and 
\begin{equation} \label{condfond_TR}
\quad  \|F(x)-F(\tilde x)-J(x)(x-\tilde x)\|\le c \|x-\tilde x\|\, \|F(x)-F(\tilde x)\|, \;\;\; x,\tilde x \in B_{2\rho} (x_{\bar k}^\delta).
\end{equation}
Moreover,  letting  $x^\dagger$ be a solution of \req{prob}, and  $\tau>1/q$ if the data are noisy,  
suppose that $x_{\bar k}^\delta$  satisfies 
\begin{eqnarray}
\|x_{\bar k}-x^\dagger\|&<& \min\left\{\frac{q}{c},  \rho\right \},   \hspace*{43pt}\mbox{if}\;\;\;\;\delta=0,\label{locTR1}\\
\|x_{\bar k}^\delta-x^\dagger\|& <&  \min\left\{\frac{q\tau-1}{c(1+\tau)},  \rho \right\}, \;\;\;\;\mbox{if}\;\;\;\;\delta>0\label{locTR2}.
\end{eqnarray}
\end{ipotesi}

To our knowledge,  except for paper \cite{wy,zw},
local convergence properties of trust-region  strategies 
have been  analyzed under assumptions 
which involve the inverse of $J$ and its upper bound in a neighbourhood of a solution,
and thus are stronger than \req{condfond_TR}.

The following theorems  show the local behaviour of the regularizing trust-region method. 
We prove that  locally the trust-region is active, the iterates $x_k^\delta$ with $k>\bar k$ remain into the  ball  $B_{\rho} (x_{\bar k}^\delta)$ and the resulting algorithm is regularizing. We remark that in standard trust-region methods,
the trust-region becomes eventually inactive. On the other hand, regularization requires strictly positive scalars $\lambda_k$,
and consequently an active trust-region in all iterations.
First we focus on the noise-free case and we show that the error 
$\| x_{k}- x^\dagger\|$ decreases in a monotonic way for $k\ge\bar k$, and the sequence 
$\{x_{k}\}$ converges to a solution of (\ref{prob}). 
 \vskip 5pt

\begin{theorem}\label{lambdaTR_bound}
Suppose that  Assumptions \ref{A2} and \ref{ATR}  hold and $\delta=0$. Then,  Algorithm \ref{algoTR} generates a sequence 
$\{x_{k}\}$  such that, for $k>\bar k$,
\begin{description}
\item{(i)}   the trust-region is active, i.e. $\lambda_k>0$ and 
$x_k$ belongs to $B_{\rho}(x_{\bar k})$;
\item{(ii)} $\| x_{k+1}- x^\dagger\|<\| x_{k}- x^\dagger\|$;
\item{(iii)} there exists a constant $\bar \lambda > 0$ such that $ \lambda_k\le \bar \lambda$.
\end{description}
Moreover, 
\begin{description}
\item{(iv)} the sequence $\{x_k\}$  converges to a solution of \req{prob}.
\end{description}
\end{theorem}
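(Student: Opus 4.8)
The plan is to carry a single induction on $k\ge\bar k$ that establishes (i) and (ii) simultaneously, then to read off (iii) from the trust-region mechanics and finally to prove (iv). Since $\delta=0$ I write $F_k:=F(x_k)-y=F(x_k)-F(x^\dagger)$, $J_k:=J(x_k)$, $s_k:=p_k$, $e_k:=x_k-x^\dagger$ and $\tilde r_k:=F_k+J_ks_k$; the inductive hypothesis is $\|e_k\|<q/c$, which holds at $k=\bar k$ by \req{locTR1}. Under it, \req{condfond_TR} with $x=x_k$, $\tilde x=x^\dagger$ gives $\|F_k-J_ke_k\|\le c\|e_k\|\,\|F_k\|<q\|F_k\|$. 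To get (i), note $J_ke_k\in\mathcal R(J_k)$, so $\|P_k^\delta F_k\|=\|P_k^\delta(F_k-J_ke_k)\|\le\|F_k-J_ke_k\|<q\|F_k\|$; in particular $g_k\neq0$ (otherwise $F_k\perp\mathcal R(J_k)$ and $\|P_k^\delta F_k\|=\|F_k\|$). Thus we are in the second alternative of Lemma \ref{sol_q}, where \req{RQ} forces $\lambda\ge\lambda_k^q>0$; since Step 2 of Algorithm \ref{algoTR} keeps $\Delta_k\le\frac{1-q}{\|B_k\|}\|g_k\|$, Lemma \ref{delta_qcond} yields $\lambda_k\ge\lambda_k^q>0$, i.e. the trust region is active.

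For (ii) I would use Hanke's energy identity. From \req{system_1}, $J_k^T\tilde r_k=-\lambda_ks_k$, hence $s_k=-\lambda_k^{-1}J_k^T\tilde r_k$ and $\langle\tilde r_k,J_ks_k\rangle=-\lambda_k\|s_k\|^2$, so that $\langle\tilde r_k,F_k\rangle=\|\tilde r_k\|^2+\lambda_k\|s_k\|^2$. Writing $J_ke_k=F_k-w_k$ with $\|w_k\|\le c\|e_k\|\,\|F_k\|$ and substituting, a short computation gives
\[
\|e_{k+1}\|^2-\|e_k\|^2=2\langle e_k,s_k\rangle+\|s_k\|^2\le-\|s_k\|^2-\frac{2}{\lambda_k}\|\tilde r_k\|\bigl(\|\tilde r_k\|-c\|e_k\|\,\|F_k\|\bigr).
\]
By the $q$-condition \req{RQ}, $\|\tilde r_k\|\ge q\|F_k\|$, and since $c\|e_k\|<q$ the parenthesis is at least $(q-c\|e_k\|)\|F_k\|>0$; thus the right-hand side is strictly negative and $\|e_{k+1}\|<\|e_k\|<q/c$. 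This closes the induction and, by monotonicity of $\|x_k-x^\dagger\|$, confines the iterates to a ball about $x^\dagger$ of radius $\|e_{\bar k}\|<\rho$, hence inside $B_{2\rho}(x_{\bar k})$ where \req{condfond_TR} remains valid — the containment asserted in (i).

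Part (iii) is the genuinely trust-region-specific step, and I expect it to be the main obstacle, because activity of the region (a small step) pushes $\lambda_k$ up, so an upper bound on $\lambda_k$ must be wrung out of a lower bound on the accepted radius. Since the region is active, $\|s_k\|=\Delta_k$, and $(B_k+\lambda_kI)s_k=-g_k$ gives $\lambda_k\le\|g_k\|/\Delta_k$; hence it suffices to show $\Delta_k\ge\hat C\|g_k\|$ for the accepted radius. The initial choice in Step 2 already gives $\Delta_k\ge C_{\min}\|g_k\|$, but the inner loop (Step 3.3) may shrink it, so I would invoke the standard trust-region estimate under Assumption \ref{A2} and Lipschitz continuity of $J$: $|ared(p_k)-pred(p_k)|=O(\Delta_k^2)$ while $pred(p_k)\ge\frac12\|g_k\|\min\{\Delta_k,\|g_k\|/\|B_k\|\}$, so that $\rho_k\ge\eta$ is guaranteed once $\Delta_k$ drops below a threshold proportional to $\|g_k\|$ (with constant depending on $\kappa_J$ and the Lipschitz modulus). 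The last reduction therefore cannot carry $\Delta_k$ below $\gamma$ times this threshold, giving $\Delta_k\ge\hat C\|g_k\|$ and $\lambda_k\le1/\hat C=:\bar\lambda$.

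Finally, for (iv) I would feed (iii) back into the energy estimate. With $\lambda_k\le\bar\lambda$ and $\|e_k\|\le\|e_{\bar k}\|$, setting $\beta:=q-c\|e_{\bar k}\|>0$ the estimate sharpens to $\|e_{k+1}\|^2\le\|e_k\|^2-\frac{2q\beta}{\bar\lambda}\|F_k\|^2$; telescoping yields $\sum_{k\ge\bar k}\|F_k\|^2<\infty$, so $\|F(x_k)-y\|\to0$. The bounded sequence $\{x_k\}$ then has an accumulation point $x^*$, which by continuity of $F$ solves \req{prob} and lies in $B_{2\rho}(x_{\bar k})$. Because $x^*$ is itself a solution, the monotonicity argument of (ii) applies verbatim with $x^\dagger$ replaced by $x^*$ starting from any index where $\|x_k-x^*\|<q/c$; thus $\{\|x_k-x^*\|\}$ is eventually monotone decreasing and has a subsequence tending to $0$, which forces $\|x_k-x^*\|\to0$ and hence $x_k\to x^*$.
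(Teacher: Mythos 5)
Your parts (i)--(ii) follow the same route as the paper's proof: activity of the trust region comes from Lemma \ref{sol_cond_q} (via \req{gamma_libro}, which your inductive hypothesis $c\|e_k\|<q$ supplies) together with the radius bound of Lemma \ref{delta_qcond}, and the monotone error decay comes from the ``energy'' inequality. The only difference is that the paper cites this inequality from \cite[Proposition 4.1]{kns} as \req{monoton_1_new}, whereas you re-derive it from $J_k^T\tilde r_k=-\lambda_k s_k$; your bound is exactly \req{monoton_1_new} with $\theta_k=q/(c\|x_k-x^\dagger\|)$, so this part is correct and self-contained. (Both you and the paper really only establish containment in $B_{2\rho}(x_{\bar k})$ rather than $B_{\rho}(x_{\bar k})$; that is all the induction needs, since \req{condfond_TR} holds on the larger ball.)

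Part (iii) has the one genuine gap: you invoke the standard estimate $|ared(p_k)-pred(p_k)|=O(\Delta_k^2)$ ``under Assumption \ref{A2} and Lipschitz continuity of $J$'', but Lipschitz continuity of $J$ is \emph{not} among the hypotheses of Theorem \ref{lambdaTR_bound} --- it is assumed only in the global convergence theorem of \S\ref{defTR}, and the point of the local analysis is to avoid such extra smoothness/invertibility assumptions. The paper obtains the needed quadratic bound without it: by \req{condfond_TR} and the mean value theorem, $\|F(x_k+p_k)-F(x_k)-J(x_k)p_k\|\le c\|p_k\|\,\|F(x_k+p_k)-F(x_k)\|\le c\kappa_J\|p_k\|^2$ (this is \req{ineq_p2}), which together with the Cauchy-decrease bound of \cite[Theorem 6.3.1]{cgt} yields the radius threshold \req{omega} and hence $\lambda_k\le\max\{1/(\gamma\omega),1/C_{\min}\}$. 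Since you already used precisely this Taylor-remainder bound in part (ii), the repair is immediate; but as written, your step (iii) rests on an assumption you do not have.

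Your part (iv) takes a genuinely different route from the paper, and it is valid. The paper first shows $\{x_k\}$ is Cauchy ``following the lines of \cite[Theorem 4.2]{kns}'' and only then identifies the limit as a solution via $\|F(x_k)-y\|\to 0$. You instead telescope the energy estimate to get $\sum_k\|F(x_k)-y\|^2<\infty$, extract an accumulation point $x^*$ (which solves \req{prob} by continuity), and then rerun the monotonicity argument of (ii) with $x^*$ in place of $x^\dagger$; this is legitimate because the $q$-condition and $\lambda_k>0$ are properties of the step itself, not of the reference solution, and \req{condfond_TR} applies to the pair $(x_k,x^*)$ since $x^*$ lies in $B_{2\rho}(x_{\bar k})$. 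Eventual monotonicity of $\|x_k-x^*\|$ plus a null subsequence then forces $x_k\to x^*$. This avoids the Cauchy-sequence machinery of \cite{kns} entirely and makes the convergence proof more elementary and self-contained, at no loss of generality.
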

\begin{proof} 
{\it (i)-(ii)}  The scalar $\lambda_{\bar k}$ in Algorithm \ref{algoTR} is  such that 
$\lambda_{\bar k}\ge \lambda_{\bar k}^q$.
From  Assumption \ref{ATR},     condition 
\req{gamma_libro} is satisfied at 
$k=\bar k$ with $\theta_{\bar k}= \displaystyle \frac{q}{c\|x_{\bar k}-x^\dagger\|}$, 
and consequently  by Lemma \ref{sol_cond_q}  $\lambda_{\bar k}^q$ is strictly positive.
Hence, the trust-region is active.
Further,  by  a straightforward adaptation of   \cite[Proposition 4.1]{kns}\footnote{cfr.  equation  (4.6)  in  \cite[Proposition 4.1]{kns}} 
it follows
$$ 
\|x_{\bar k}-x^\dagger\|^2-\|x_{\bar k+1}-x^\dagger\|^2\ge  \frac{2(\theta_{\bar k}-1)}{\theta_{\bar k} \lambda_{\bar k}}\|F(x_{\bar k})-y+J(x_{\bar k}) p_{\bar k}\|^2,
$$
and this implies 
$\|x_{\bar k+1}-x^\dagger\|<\|x_{\bar k}-x^\dagger\|$ as  (\ref{locTR1}) guarantees  $\theta_{\bar k}>1$. Consequently $x_{\bar k+1}\in B_{\rho} (x_{\bar k})$.  
Repeating the above  arguments,  by induction we can prove that condition \req{gamma_libro} holds     for $k\ge \bar k$, $\lambda_k>0$, and the following inequality holds
\begin{equation}\label{monoton_1_new}
\qquad \ 
\|x_k-x^\dagger\|^2-\|x_{k+1}-x^\dagger\|^2\ge  \frac{2(\theta_k-1)}{\theta_k \lambda_k}\|F(x_{k})-y+J(x_{ k}) p_k\|^2,
\end{equation}
with
$$\theta_k=\displaystyle\frac{q}{c\|x^\dagger-x_k\|}>1.$$
Thus, by induction, the sequence $\{\|x_k-x^\dagger\|\}_{k=\bar k}^\infty$ is monotonic decreasing  and    the sequence $\{\theta_k\}_{k=\bar k}^\infty$  is monotonic increasing.
 
{\it (iii)} Since  the trust-region is active, by \req{system_1}
\begin{equation}\label{bound_supl}
\Delta_k=\|p_k\|=\|(B_k+\lambda_kI)^{-1}g_k\|\le \frac{\|g_k\|}{\lambda_k}.
\end{equation}
Thus  our claim follows if  $\Delta_k/\|g_k\|$ is larger than a suitable threshold, independent from $k$.
Let us provide such a bound by estimating the value of $\Delta_k$
which guarantees condition  $\rho_k\ge\eta$. 
If this condition  is fulfilled for the value of $\Delta_k$
fixed in Step 2 of Algorithm \ref{algoTR}, then $\Delta_k/\|g_k\|\ge C_{\min}$;
otherwise, the trust-region radius is progressively reduced, and we provide a bound
for the value of $\Delta_k$ at termination of Step 3 of  Algorithm \ref{algoTR}
in the case where  $\Phi(x_k+p_k)>\mktr(p_k)$. This occurrence represents the 
most adverse case; in fact if  $\Phi(x_k+p_k)\le\mktr(p_k)$
then $\rho_k\ge 1>\eta$ and the repeat loop  terminate for a trust-readius greater than 
or equal to the one estimated below.
Trivially, 
$$
1-\rho_k = \frac{\Phi(x_k+p_k)-\mktr(p_k)}{\Phi(x_k)-\mktr(p_k)},
$$
and 
\begin{eqnarray*}
\Phi(x_k+p_k)-\mktr(p_k)&\le & \frac{1}{2}\|F(x_k+p_k)-F(x_k)-J(x_k) p_k\|^2   \\
&&+\|F(x_k+p_k)-F(x_k)-J(x_k) p_k\|\|F(x_k)-y+J(x_k) p_k\| .
\end{eqnarray*}
By \req{condfond_TR} and the mean value \cite[Theorem 11.1]{nw}, it holds
\begin{equation}\label{ineq_p2} \ \ \
\|F(x_k+p_k)-F(x_k)-J(x_k) p_k\|\le c \|p_k\| \|F(x_k+p_k)-F(x_k)\|\le c\kappa_J \|p_k\|^2.
\end{equation}
Consequently, as $\Delta_k\le C_{\max}\|g_k\|$, 
$$
\Phi(x_k+p_k)-\mktr(p_k)  \le \frac{1}{2}  c\kappa_J \Delta_k^2\|F(x_{0})-y\| (c\kappa_J^3C_{\max}^2\|F(x_{0})-y\|+2).
$$
Theorem  6.3.1 in \cite{cgt} shows that 
$$
\Phi(x_k) -\mktr(p_k)
\geq  \frac{1}{2}\|g_k\|
\min \left\{ \Delta_k,\frac{\|g_k\|}{\|B_k\|} \right\}.
$$
Then, 
$$
\Phi(x_k)-\mktr(p_k)\geq \frac{1}{2}\Delta_k \|g_k\|,
$$
whenever  $\Delta_k\le \displaystyle \frac{\|g_k\|}{\kappa_J^2}$
and this implies
$$
1-\rho_k\le \frac{c\kappa_J\Delta_k \|F(x_{0})-y\| (c \kappa_J^3C_{\max}^2\|F(x_{0})-y\|+2)}{  \|g_k\|}.
$$
Namely, termination of the repeat loop occurs with
$$
 \Delta_k\le \|g_k\| \omega,\nonumber
$$
and
\begin{equation}\label{omega}
\omega=\min \left\{\frac{1}{\kappa_J^2},\frac{1-\eta}{c\kappa_J\|F(x_{0})-y\|  (c  \kappa_J^3C_{\max}^2\|F(x_{0})-y\|+2)} \right\}.
\end{equation}
Taking into account Step 2 and  the updating rule at Step 3.3, we can conclude that,
at termination of Step 3, the trust-region radius $\Delta_k$ satisfies
$$
\Delta_k\ge \min\left \{C_{\min} , \, \gamma \omega
 \right \}\|g_k\| .
$$
Finally, by \req{bound_supl} $\lambda_k\le \bar \lambda$ as   
\begin{equation} \label{bound_lambda_1}
\lambda_k\le \frac{\|g_k\|}{\Delta_k} \le \max \left \{  \frac{1}{ \gamma  \omega},\, \frac{1}{C_{\min}}
\right \}.
\end{equation}

{\it (iv)} Since both the function $(\theta-1)/\theta$ and the sequence $\{\theta_k\}_{k=\bar k}^\infty$
are monotonic increasing,  it follows 
$(\theta_k-1)/\theta_k>(\theta_{\bar k}-1)/\theta_{\bar k}$ and by \req{monoton_1_new} 
\begin{equation}\label{monoton_3}
\|x_k-x^\dagger\|^2-\|x_{k+1}-x^\dagger\|^2\ge\frac{2(\theta_{\bar k}-1)}{\theta_{\bar k} \lambda_k}\|F(x_k)-y+J(x_k) p_k\|^2.
\end{equation}
Therefore, following the lines of the proof of  Theorem 4.2 in \cite{kns},
we can conclude that $\{x_k\}$ is a Cauchy sequence, i.e., it is convergent.

Finally,  by  \req{monoton_3},   $\lambda_k\le \bar \lambda$ 
and \req{RQ}  
$$
\|x_k-x^\dagger\|^2-\|x_{k+1}-x^\dagger\|^2\ge\frac{2(\theta_{\bar k}-1)q^2}{\theta_{\bar k}  \bar \lambda} \|F(x_k)-y\|^2,
$$
Hence  $ \|F(x_k)-y\|$ tends to zero and the limit of $x_k$ has to be a solution of (\ref{prob}). 

\end{proof}

A similar  result can be given for the noisy case. In the following theorem we prove that for $\bar k<k<k_*$, where
  $k_*$ is defined in \req{discrepance}, the trust region is active and therefore $\lambda_k>0$.
Moreover, the stopping criterion 
\req{discrepance} is satisfied after a finite number  of iterations and the method is regularizing as  the error decreases monotonically and 
 the sequence  $\{x_{k_*}^\delta\}$ converges to a solution of \req{prob} whenever $\delta$ goes to zero.

\begin{theorem}\label{lambdaTR_bound_noise}
Suppose that  Assumptions \ref{A2} and \ref{ATR}  hold and $\delta>0$. Moreover, 
 suppose  $\tau>1/q$ and  $\bar k<k_*$, where $\tau$ and $k_*$ are defined in \req{discrepance}.
Then,  Algorithm \ref{algoTR} generates a sequence  $\xkd$    such that, for $\bar k\le k<k_*$,
\begin{description}
\item{(i)} the trust-region is active, i.e. $\lambda_k>0$ and 
$\xkd$ belongs to $B_{\rho}(x_{\bar k}^\delta)$; 
\item{(ii)} $\| x_{k+1}^\delta- x^\dagger\|<\| \xkd- x^\dagger\|$;
\item{(iii)} there exists a constant $\bar \lambda > 0$ such that $ \lambda_k\le \bar \lambda$.
\end{description}
Moreover,
\begin{description}
\item{(iv)} the stopping criterion 
\req{discrepance} is satisfied after a finite number $k_*$ of iterations and  
 the sequence  $\{x_{k_*}^\delta\}$ converges to a solution of \req{prob} whenever $\delta$ goes to zero.
\end{description}
\end{theorem}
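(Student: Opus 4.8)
The plan is to mirror the exact-data argument of Theorem \ref{lambdaTR_bound}, replacing the condition \req{gamma_libro} by its noisy counterpart and carefully tracking the noise level $\delta$ throughout. First I would verify that \req{locTR2} forces the noisy analogue of \req{gamma_libro} to hold at $k=\bar k$. Applying \req{condfond_TR} with $x=x_{\bar k}^\delta$ and $\tilde x=x^\dagger$ (so that $F(\tilde x)=y$), then splitting $F(x_{\bar k}^\delta)-y^\delta=(F(x_{\bar k}^\delta)-y)+(y-y^\delta)$ and using $\|y-y^\delta\|\le\delta$ together with the pre-stopping bound $\|F(x_{\bar k}^\delta)-y^\delta\|>\tau\delta$, one obtains
\[
\|F(x_{\bar k}^\delta)-y^\delta+J(x_{\bar k}^\delta)(x^\dagger-x_{\bar k}^\delta)\|\le \Big(c\,\tfrac{\tau+1}{\tau}\|x_{\bar k}^\delta-x^\dagger\|+\tfrac1\tau\Big)\|F(x_{\bar k}^\delta)-y^\delta\|.
\]
The factor in parentheses equals $q/\theta_{\bar k}$ with $\theta_{\bar k}=q\tau/\big(1+c(\tau+1)\|x_{\bar k}^\delta-x^\dagger\|\big)$, and \req{locTR2} is precisely the inequality that makes $\theta_{\bar k}>1$. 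Hence \req{gamma_libro} holds, Lemma \ref{sol_cond_q} gives $\lambda_{\bar k}^q>0$, and the radius rule of Step 2 together with Lemma \ref{delta_qcond} yields $\lambda_{\bar k}\ge\lambda_{\bar k}^q>0$, i.e. the trust region is active.

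For the monotonic error decrease I would establish the noisy counterpart of \req{monoton_1_new} by adapting \cite[Proposition 4.1]{kns}. Writing the step through $(B_k+\lambda_k I)p_k=-g_k$, expanding $\|x_{k+1}^\delta-x^\dagger\|^2$, and using the same splitting of $F(x_k^\delta)-y^\delta$ as above together with the $q$-condition \req{RQ}, the computation yields
\[
\|x_k^\delta-x^\dagger\|^2-\|x_{k+1}^\delta-x^\dagger\|^2\ge \frac{2q(\theta_k-1)}{\theta_k\lambda_k}\,\|\Fdk+\Jdk p_k\|\,\|\Fdk\|,
\]
with $\theta_k=q\tau/\big(1+c(\tau+1)\|x_k^\delta-x^\dagger\|\big)$. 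As long as $\bar k\le k<k_*$ one has $\theta_k>1$, so the right-hand side is strictly positive and (ii) holds; the decrease of $\|x_k^\delta-x^\dagger\|$ keeps \req{locTR2} valid at the next index, so by induction the trust region stays active, $\{\theta_k\}$ is monotone increasing, and all iterates remain in the ball where \req{condfond_TR} applies, completing (i) and (ii).

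Part (iii) requires no new idea: the bound $\lambda_k\le\bar\lambda$ is obtained exactly as in the proof of Theorem \ref{lambdaTR_bound}(iii). That argument only estimates, via $\rho_k\ge\eta$ at termination of the repeat loop, the quantities $\mktr(p_k)$ and $\Phi(x_k^\delta+p_k)-\mktr(p_k)$ through \req{condfond_TR}, Assumption \ref{A2} and Theorem 6.3.1 in \cite{cgt}; none of these steps uses $\delta=0$, so it transfers verbatim, yielding $\Delta_k\ge\min\{C_{\min},\gamma\omega\}\|g_k\|$ with $\omega$ as in \req{omega} (now with $\|F(x_0)-y^\delta\|$ in place of $\|F(x_0)-y\|$, the global monotone residual bound) and hence $\lambda_k\le\|g_k\|/\Delta_k\le\bar\lambda$ as in \req{bound_lambda_1}.

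Finally, for (iv) I would sum the monotonicity inequality. Using that $(\theta-1)/\theta$ and $\{\theta_k\}$ are increasing, $\lambda_k\le\bar\lambda$, the $q$-condition $\|\Fdk+\Jdk p_k\|\ge q\|\Fdk\|$, and the pre-stopping bound $\|\Fdk\|>\tau\delta$, I obtain
\[
\|x_{\bar k}^\delta-x^\dagger\|^2\ge\!\!\sum_{k=\bar k}^{k_*-1}\!\big(\|x_k^\delta-x^\dagger\|^2-\|x_{k+1}^\delta-x^\dagger\|^2\big)\ge \frac{2q^2(\theta_{\bar k}-1)}{\theta_{\bar k}\bar\lambda}\,(k_*-\bar k)\,(\tau\delta)^2,
\]
so $k_*-\bar k$ is finite, proving the discrepancy principle is met in finitely many steps. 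The convergence $x_{k_*}^\delta\to$ a solution of \req{prob} as $\delta\to0$ then follows by the regularization argument already used for the Levenberg--Marquardt method (Theorem \ref{conv_lm}, from \cite{hanke}; see also \cite{kns}), distinguishing whether $k_*(\delta)$ stays bounded or tends to infinity. The main obstacle is the bookkeeping of the noise term in the monotonicity inequality: it is the combination of the $q$-condition with the pre-stopping bound $\|\Fdk\|>\tau\delta$ that lets the $\delta$-contributions be absorbed into a strictly positive decrease, and this is exactly what dictates the sharp form $\|x_{\bar k}^\delta-x^\dagger\|<(q\tau-1)/(c(1+\tau))$ of \req{locTR2}.
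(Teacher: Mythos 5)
Your proposal is correct and follows essentially the same route as the paper: the same splitting of $F(x_{\bar k}^\delta)-y^\delta$ combined with the pre-stopping bound $\|\Fdk\|>\tau\delta$ to obtain \req{gamma_libro} with $\theta_{\bar k}=q\tau/\bigl(1+c(1+\tau)\|x^\dagger-x_{\bar k}^\delta\|\bigr)>1$, the same adaptation of \cite[Proposition 4.1]{kns} for the induction giving (i)--(ii) (your form of the monotonicity inequality, with $2q(\theta_k-1)\|\Fdk+\Jdk p_k\|\,\|\Fdk\|/(\theta_k\lambda_k)$ on the right, is equivalent to the paper's via the $q$-condition), the same verbatim transfer of the exact-data argument for (iii) with $y$ replaced by $y^\delta$ in \req{omega}, and the same summation plus citation of \cite[Theorem 2.3]{hanke} for (iv).
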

\begin{proof} 
{\it (i)-(ii)} By \req{condfond_TR}  and \req{yd} we get
\begin{eqnarray*}
\|y^\delta- F(x_{\bar k}^\delta)-J(x_{\bar k}^\delta)(x^\dagger-x_{\bar k}^\delta)\|&\le& \delta+ 
\|y -F(x_{\bar k}^\delta)-J(x_{\bar k}^\delta)(x^\dagger-x_{\bar k}^\delta)\| \\
& \le& \delta+c\|x^\dagger-x_{\bar k}^\delta\|\, \|y-F(x_{\bar k}^\delta)\| \\
&\le & (1+ c\|x^\dagger-x_{\bar k}^\delta\|)\delta+ c\|x^\dagger-x_{\bar k}^\delta\|\, \|y^\delta-F(x_{\bar k}^\delta)\| .
\end{eqnarray*}
Then, at  iteration $\bar k$, condition \req{discrepance}   gives
\begin{eqnarray*}
\|y^\delta- F(x_{\bar k}^\delta)-J(x_{\bar k}^\delta)(x^\dagger-x_{\bar k}^\delta)\|& \le & 
\left( \frac{1+ c\|x^\dagger-x_{\bar k}^\delta\|}{\tau}+ c\|x^\dagger-x_{\bar k}^\delta\| \right) \|y^\delta-F(x_{\bar k}^\delta)\| ,
\end{eqnarray*}
and (\ref{locTR2}) yields   \req{gamma_libro} with
$
\theta_k=\displaystyle \frac{q\tau}{1+c(1+\tau)\|x^\dagger-x_{\bar k}^\delta\|}>1.
$
Then, Lemma \ref{sol_cond_q} yields $\lambda_{\bar k}^q>0$ and therefore $\lambda_{\bar k}\ge \lambda_{\bar k}^q$ is strictly positive.
Further,  by  a straightforward adaptation of   \cite[Proposition 4.1]{kns}, it follows
$$ 
\|x_{\bar k}^\delta-x^\dagger\|^2-\|x_{\bar k+1}^\delta-x^\dagger\|^2\ge  \frac{2(\theta_{\bar k}-1)}{\theta_{\bar k} \lambda_{\bar k}}\|F(x_{\bar k}^\delta)-y^\delta+J(x_{\bar k}^\delta) p_{\bar k}\|^2,
$$
and this implies 
$\|x_{\bar k+1}^\delta-x^\dagger\|<\|x_{\bar k}^\delta-x^\dagger\|$ and consequently $x_{\bar k+1}^\delta\in B_{\rho} (x_{\bar k}^\delta)$.  
Repeating the above  arguments,  by induction, we can prove that,   for $ \bar k<k<k_*$, condition \req{gamma_libro} holds,
$\lambda_k>0$, and 
\begin{equation}\label{monoton_1_new_noise}\qquad \ 
\|x_k^\delta-x^\dagger\|^2-\|x_{k+1}^\delta-x^\dagger\|^2\ge  \frac{2(\theta_k-1)}{\theta_k \lambda_k}\|F(x_{k}^\delta)-y^\delta+J(x_{ k}^\delta) p_k\|^2,
\end{equation}
with
$$\theta_k=\displaystyle \frac{q\tau}{1+c(1+\tau)\|x^\dagger-x_{ k}^\delta\|}.
$$
Thus $\|x_{k+1}^\delta-x^\dagger\|<\|x_k^\delta-x^\dagger\|$   
and     $\theta_{k+1}>\theta_k$ for $\bar k\le k<k_*$. 

{\it ({iii})} Proceeding as in the proof of point {\it (iii)} of Theorem \ref{lambdaTR_bound}, just replacing $x_k$ with $x_k^\delta$, we get that for  $ \bar k<k<k_*$, 
$\lambda_k<\bar \lambda$ with 
$$
\bar \lambda\le  \max \left \{  \frac{1}{ \gamma \omega },\, \frac{1}{C_{\min}}
\right \}.
$$
where $ \omega$ is obtained replacing $y$ with $y^{\delta}$ in (\ref{omega}).

{\it (iv)}  Summing up from $\bar k$ to $k_*-1$,  by (\ref{RQ}) and   (\ref{monoton_1_new_noise})  it follows 
$$
(k_*-\bar k)\tau^2 \delta^2 \le \sum_{k=\bar k}^{k_*-1} \|\Fdk\|^2\le\frac{\theta_{\bar k} \bar \lambda} {2(\theta_{\bar k}-1)q^2}\|x_{\bar k}^\delta-x^{\dagger}\|^2.
$$
Thus, $k_*$ is finite for $\delta>0$, and  convergence of $x^\delta_{k_*}$ to a solution of (\ref{prob}) as $\delta$ goes to 0 is shown in  \cite[Theorem 2.3]{hanke}.  
\end{proof}

\section{Numerical results} \label{exps}
In this section we  report on the performance of the regularizing 
trust-region   method
and make comparisons  with  the regularizing Levenberg-Marquardt method and 
a standard  version  of the trust-region  method.
The test problems are  ill-posed and with noisy data, and arise from the discretization of  
nonlinear Fredholm  integral equations of the first  kind
\begin{equation}\label{int_eq}
\int _0^1 k(t,s,x(s))ds=y(t), \quad t\in [0,1].
\end{equation}

 The integral equations considered  model  inverse problems from groundwater hydrology and geophysics.
 Their kernel is of the form
 \begin{equation}\label{kernel1}
 k(t,s,x(s))=\mbox{log}\left(\frac{(t-s)^2+H^2}{(t-s)^2+(H-x(s))^2}\right),
 \end{equation}
 see \cite[\S 3]{vogel}, or
 \begin{equation}\label{kernel2}
 k(t,s,x(s))=\frac{1}{\sqrt{1+(t-s)^2+x(s)^2}},
 \end{equation}
 see \cite[\S 6]{k}.
 The interval $[0,1]$ was discretized with  $n=64$ equidistant grid 
 points $t_i=(i-1) h$, $h=1/(n-1)$, $i=1, \ldots, n$. 
 Function $x(s)$ was approximated from the $n$-dimensional subspace of $H_0^1(0,1)$
 spanned by  standard piecewise linear functions. Specifically, we let 
 $s_j=(j-1) h$, $h=1/(n-1)$, $j=1, \ldots, n$, and looked for an approximation 
 $\hat x(s)=\sum_{j=1}^n \hat x_j \phi_j(s)$ where 
 $$
 \phi_1(s)=
 \left\{
 \begin{array}{ll}
 \frac{s_2-s}{h} & \mbox{if} \quad s_1\le s\le s_2 \\
 0   & \mbox{otherwise}
 \end{array}
 \right .,
 \qquad   
 \phi_n(s)=
 \left\{
 \begin{array}{ll}
 \frac{s-s_{n-1}}{h} & \mbox{if} \quad s_{n-1}\le s\le s_n \\
 0   & \mbox{otherwise}
 \end{array}
 \right .  ,
 $$
 and 
 $$
 \phi_j(s)=
 \left\{
 \begin{array}{ll}
 \frac{s-s_{j-1}}{h} & \mbox{if} \quad s_{j-1}\le s\le s_j,\\
 \frac{s_{j+1}-s}{h} & \mbox{if} \quad s_{j}\le s\le s_{j+1},\\
 0   & \mbox{otherwise}
 \end{array}
 \right . \qquad j=2,\ldots n-1.
 $$
Finally, the integrals $\int _0^1 k(t_i,s,\hat x(s))ds$, $1\le i\le n$, were  approximated
by the composite trapezoidal rule on the points $s_j$, $1\le j\le n$. 
The resulting discrete problems are square nonlinear systems \req{prob} with unknown 
$x=(\hat x_1, \ldots, \hat x_n)^T$.  We observe that $\hat x(s_j)=\hat x_j$; 
thus, the $j$-th component of $x$  approximates a solution of \req{int_eq} at $s_j$.
 
Two problems with kernel \req{kernel1} and two problems with kernel \req{kernel2} were considered
and built so that   solutions (later denoted as true solutions) are known. 
Concerning kernel \req{kernel1}, the first problem is given  in \cite[p. 46]{vogel}; it
admits as true continuous solutions the functions
$x_{true}(s)=c_1e^{d_1(s+p_1)^2}+c_2e^{d_2(s-p_2)^2}+c_3+c_4$ and 
$x_{true}(s)=2H-c_1e^{d_1(s+p_1)^2}-c_2e^{d_2(s-p_2)^2}-c_3-c_4$ 
where $H=0.2, c_1=-0.1, c_2=-0.075, d_1=-40, d_2=-60, p_1=0.4, p_2=0.67, c_3$ and $c_4$ 
are chosen such that $x_{true}(0)=x_{true}(1)=0$. The second problem was given in \cite[p. 835]{wy}
and it has true continuous solutions $x_{true}(s)=1.3s(1-s)+0.2$ and $x_{true}(s)=1.3s(s-1)$.

The third and fourth problems have   kernel  \req{kernel2};
the former  has  solutions $x_{true}(s)=1$ and $x_{true}(s)=-1,\;s\in [0,1]$,  
see \cite[p. 660]{k}, while the latter has the discontinuous  functions
\begin{equation}
x_{true}(s) = \begin{cases}   1 & \text{ if } 0 \leq s \leq \frac{1}{2} \\
0 & \text{ if } \frac{1}{2}< s \leq 1 \end{cases},\quad 
x_{true}(s) = \begin{cases}  - 1 & \text{ if } 0 \leq s \leq \frac{1}{2} \\
0 & \text{ if } \frac{1}{2}< s \leq 1 \end{cases}
\end{equation}
as the true solutions, \cite[p. 662]{k}.

The nonlinear systems arising  from the  discretizations of the four  problems 
are denoted as {\tt P1},  {\tt P2}, {\tt P3} and {\tt P4} respectively, while  $x^\dagger\in \mathbb{R}^n$
denotes    a solution of the discretized problems.
Given the error level $\delta$, the exact data $y$ was perturbed by 
normally distributed values with mean $0$ and variance $\delta$  using
the {\sc Matlab} function {\tt randn}.

All procedures were implemented in {\sc Matlab} and run using {\sc Matlab  2014}b on an Intel Core(TM) i7-4510U
2.6 GHz, 8 GB RAM; the  machine precision is $\epsilon_m\approx 2\cdot 10^{-16}$. 
The Jacobian of the  nonlinear function $F$   was computed by finite differences. 
The parameter $q$ used in \req{seculare_q} and in \req{RQ} 
was set equal to $1.1/\tau$ and the discrepancy principle \req{discrepance}  
with $\tau=1.5$ was used as the stopping criterion. A maximum number of 300 iterations was allowed and a failure  
was declared when this limit was exceeded.

In the implementation of the  regularizing  trust-region method, 
Step 3 in Algorithm \ref{algoTR} was performed setting $\eta=\frac{1}{4}$, $\gamma= \frac{1}{6}$.
Then,  in Step 2 the trust-region radius  was updated 
as follows 
\begin{eqnarray}
& \Delta_0  =\mu_0\|F_0\|,   &  \   \mu_0=10^{-1},\label{tr_algo1}\\
&  \Delta_{k+1} =\mu_{k+1}\vert\vert F(x_{k+1}^\delta)\vert\vert ,  \qquad & \ \mu_{k+1} = \begin{cases} \displaystyle \frac{1}{6}\mu_k & \text{ if } q_k<q, \label{tr_algo2}\\
  2\mu_k & \text{ if } q_k>\nu q, \end{cases},
\end{eqnarray}
with $q_k=\displaystyle \frac{\vert\vert J(\xkd)p_k+F(\xkd)\vert\vert}{\vert\vert F(\xkd)\vert\vert }$, and $\nu=1.1$.
The maximum and minimum values for $\Delta_k$ were set to
$\Delta_{\max}=10^4$ and  $\Delta_{\min}=10^{-12}$.
This updating strategy turned out to be efficient 
in practice and  was based on the following considerations.
Clearly, $\Delta_k$ is cheaper to compute than the upper bound in (\ref{radius})  and 
preserves the property of converging to 
zero as $\delta$ tends to zero and a solution of problem (\ref{prob_noise}) is approached. Further,
$\Delta_k$ is adjusted taking into account the  $q$-condition and  by monitoring  the value $q_k$;
therefore, if the $q$-condition was not satisfied at the last computed iterate $\xkd$, 
it is reasonable to take a smaller
radius than in the case where the $q$-condition was fulfilled.

The computation of the parameter $\lambda_k$ was performed  applying Newton method to the equation
\begin{equation}\label{sec_TR}
\psi(\lambda)=\frac{1}{\|p(\lambda)\|}-\frac{1}{\Delta_k}=0,
\end{equation}
and each  Newton iteration requires the Cholesky factorization of a shifted  
matrix of the form $B_k+\lambda I$  \cite{cgt}.
Typically high accuracy in the solution of 
the above scalar equations is not needed \cite{bcgmt, cgt} and this 
fact was  experimentally verified   also for our test problems. 
Hence, after extensive numerical 
experience, we decided to terminated the Newton process   
as soon as  $|\Delta_k-\|p(\lambda)\||\le 10^{-2} \Delta_k$.

In our implementation of the standard trust-region method, we chose 
the trust-region radius  accordingly to technicalities well-known in literatures, see 
e.g. \cite[\S 6.1]{cgt} and \cite[\S 11.2]{nw}.   In particular, we set $\Delta_0=1$,
$$
\Delta_{k+1}=\left \{ 
\begin{array}{lll}
&\displaystyle \frac{\|p_k\|}{4}, & \quad \mbox{ if } \rho_k<\frac{1}{4},\\
&\Delta_k,  & \quad \mbox{ if }  \frac{1}{4}\le \rho_k\le\frac{3}{4},\\
& \min\{2\Delta_k,\Delta_{\max}\},& \quad \mbox{otherwise},
\end{array}
\right.
$$
with $\Delta_{\max}=10^4$ and chose $\Delta_{\min}=10^{-12}$
as the   minimum values for $\Delta_k$.

Finally the Levenberg-Marquardt approach was implemented 
imposing condition (\ref{seculare_q}) and solving 
\req{sec_new}  by Newton method. In order to find an accurate solution for (\ref{seculare_q})
it was necessary to use  a tighter  tolerance, equal to $10^{-5}$, than that used in the trust-region algorithm.

Our  experiments were made   varying the noise level $\delta$ on the data $\yd$.
Tables \ref{table1} and  \ref{table2} display  the results obtained by the regularizing
trust-region algorithm with noise $\delta=10^{-4}$ and  $\delta= 10^{-2}$ respectively.
Runs for four different initial guesses $ x_0 $ are reported in the tables. 
For problems P1 and P2 the initial guesses are $x_0=0e,-0.5e,-e,-2e$ and $x_0=0e,0.5e,e,2e$ respectively, where
$e$ denotes the vector $e=(1,\ldots,1)^T$.  
For problem P3 the initial guess was chosen as the vector $x_0(\alpha)$ with $j$-th component given 
by $(x_0(\alpha))_j=g_{\alpha}(s_j)$  for $j=1,\dots,n$, where $g_\alpha(s)=(-4\alpha+4)s^2+(4\alpha-4)s+1$, 
and $s_j$ being the grid points in $[0,1]$. We have used the following values of $\alpha$, 
$\alpha=1.25,1.5,1.75,2$. 
For problem P4 the initial guess $x_0(\beta, \chi)$   has components 
$(x_0(\beta,\chi))_j=g_{\beta,\chi}(s_j)$ for $j=1,\dots,n$  with $g_{\beta,\chi}=\beta-\chi s$ and $(\beta,\chi)=(1,1),(0.5,0),(1.5,1),(1.5,0)$. 
In the tables we report: the initial guesses (for increasing distance from the true solutions) the number of iterations 
{\tt it} performed; the final norm of function $F$;
the number of function evaluations {\tt nf} performed; the rounded
average number {\tt cf} of Cholesky factorizations per iteration. 
To assess the quality of the results obtained, we measured  the distance 
between the final iterate $x_{k^*}^\delta$ and the  true solution approached; in particular 
${\tt e_I}=\max_{2\le j\le n-1} |x_{true}(s_j)-(x_{k^*}^\delta)_j|$ is the maximum absolute value  of the difference between the 
components associated to internal points $s_j\in(0,1)$, while 
${\tt e_T}=\max_{1\le j\le n} |x_{true}(s_j)-(x_{k^*}^\delta)_j|$  is the maximum absolute value  of the difference between the 
components associated to points $s_j$ including  the end-points of the interval $[0,1]$.
The symbol $``*''$ indicates that either the procedure failed to satisfy the discrepancy principle within the 
prescribed maximum number of iteration, or the final $x_{k^*}^\delta$ was not an approximation of one of the
true solutions described above.
 \begin{center}		
 	\begin{table}
 		{\small
 			\begin{tabular}{lr|cccccc|ll}
 				\hline\hline
 				\mbox{ Problem} &	&  \multicolumn{6}{c|}{RTR}  &    \multicolumn{2}{c}{RLM}  \\ 
 				&  $x_0$  & {\tt it}  & ${\tt \vert\vert F \vert\vert}$  & {\tt nf} & {\tt cf} & ${\tt e_I}$  & ${\tt e_T}$ & ${\tt e_I}$& ${\tt e_T}$\\ \hline\hline
 				{\tt P1} & $0\,e$    &  \z43   &      1.3e$-$4  &   \z44     &  5  & 5.5e$-3$ &  5.5e$-3$ & 4.5e$-3$ & 4.5e$-3$ \\
 				&		$-0.5\, e$ &   \z 63  &  	  1.2e$-$4 		&  \z71  &   5  & 3.2e$-2$ & 7.9e$-2$& 3.0e$-2$ & 7.1e$-2$  \\  
 				&		$-1\, e$    &  \z 82   &  1.4e$-$4  		&  \z94    &    4 & 3.4e$-2$  & 8.4e$-2$ & 4.0e$-2$ & 7.2e$-2$\\  
 				&		$-2\, e$   &  115   &  1.5e$-$4   		&   138   &   4 & 3.4e$-2$ & 8.6e$-2$  & 2.9e$-2$ & 6.1e$-2$\\
 			 \hline 
 				
 				{\tt P2} &$0\, e$ &   \z54  &   1.2e$-$4     &  \z55  &  5  & 7.4e$-3$ &  7.4e$-3$ & * & * \\
 				& $0.5\,e$    &  \z56   &      1.4e$-$4 &    \z59   & 5   & 1.1e$-2$ & 1.3e$-2$ & * & *   \\
 				&$1\, e$ & \z73  &  	 1.4e$-$4 		&  \z84    &  4 &   1.0e$-2$ & 1.3e$-2$  & 7.3e$-3$ & 8.3e$-3$  \\  
 				&$2\, e$   &  118   &  1.4e$-$4 	&   138   &  4  &  9.3e$-3$ & 1.1e$-2$  & 4.8e$-3$ & 4.8e$-3$  \\  
 			\hline
 				
\hline
 				
 					{\tt P3}&$x_0(1.25)$     & \z35   &  1.4e$-$4   & \z36    & 3  &  1.2e$-2$ &  1.2e$-2$ & 3.1e$-3$  & 3.1e$-3$  \\
 					&$x_0(1.5) $  &  \z43  &  	 1.4e$-$4 	&  \z44  &   3   &  5.1e$-2$ & 5.1e$-2$ & 6.2e$-2$  & 6.2e$-$2\\  
 					&$x_0(1.75) $    &   \z45   &  1.3e$-$4   &    \z46  &  3  &  3.2e$-1$ & 3.2e$-1$  & 3.1e$-1$ & 3.1e$-$1  \\  
 					& $x_0(2)$   &  \z65   & 1.4e$-$4 &   \z 71   &  3   &  4.6e$-1$ & 4.6e$-1$  & 3.8e$-1$ & 3.8e$-1$ \\
 				
 				\hline
 				
 					{\tt P4}&$x_0(1,1)$   &\z68  &  1.5e$-$4   &\z82    & 3   &  4.8e$-1$ & 4.8e$-1$  & * & *   \\
 					&$x_0(0.5,0)$  & \z64   &  1.5e$-$4 	&  \z75  &  3 &  4.9e$-1$  &  4.9e$-1$  & 4.7e$-1$  & 4.7e$-1$ \\
 					&$x_0(1.5,1)$  &  \z69   & 1.5e$-$4 	& \z78  &   3  &  5.1e$-1$ & 5.1e$-1$  & 4.8e$-1$ & 4.8e$-1$  \\  
 					
 					&$x_0(1.5,0)$   &  \z68  &   1.5e$-$4 &  \z78  & 4  &  5.2e$-1$ & 7.1e$-1$  & 5.1e$-1$ & 6.3e$-1$ \\ \hline\hline

 			\end{tabular}
 		}
 		\caption{Results obtained by the regularizing trust-region method and the regularizing
 			Levenberg-Marquardt method with noise  $\delta=10^{-4}$ and varying initial guesses.}\label{table1}
 	\end{table}
 \end{center}
 \begin{center}		
 	\begin{table}
 		{\small
 			\begin{tabular}{lr|cccccc|ll}
 				\hline\hline
 				\mbox{Problem} &	&  \multicolumn{6}{c|}{RTR}  &    \multicolumn{2}{c}{RLM}  \\ 
 				&   $x_0$  & {\tt it}  & {\tt $\vert\vert F \vert\vert$}  & nf & {\tt cf} & {\tt $e_I$}  & {\tt $e_T$} & {\tt $e_I$}& {\tt $e_T$}\\ \hline\hline
 				{\tt P1}    & 0$\,e$  & \z20  &   1.5e$-$2  &  {\z21}    & {6} &  1.9e$-2$ &  1.9e$-2$ & 1.8e$-2$ & 1.8e$-2$   \\
 				&$-0.5\, e$ & \z 29  & { 1.0e$-$2}& {\z 30}  &  {6} &  2.2e$-2$ & 3.1e$-1$  & 2.1e$-2$ & 3.1e$-1$\\  
 				&$-1\, e$   & {\z35}   & { 1.4e$-$2} &  {\z 36}  &  {5}  &  3.6e$-2$ &  6.1e$-1$  & 3.3e$-2$ & 6.1e$-1$ \\  
 				&$-2\, e$   & {\z 40}   & { 1.3e$-$2}  		&  {\z  41}   &  {5}  &  4.9e$-2$ & 1.2e$+$0  & 4.5e$-2$ & 1.2e$+$0 \\
 				 \hline

 				{\tt P2} & $0\,e$   & {\z 30}  &     { 1.4e$-$2} &  {\z  31}   &{5} &  6.9e$-3$&  1.3e$-2$ & * & *     \\
 				&$0.5\, e$ & {\z 25} & {  1.4e$-$2}		& {\z 26}  &  {5} &  1.7e$-2$&  2.1e$-1$ & * & *  \\  
 				&$1\, e$  & {\z 29}   & {  1.4e$-$2} &  {\z 30}  &   {5} &  3.8e$-2$&  5.4e$-1$  & 1.3e$-1$ & 5.2e$-1$   \\  
 				&$2\, e$  & {\z 37}   & { 1.4e$-$2}  &  {\z  39}   &  {5} &  5.5e$-2$ &  1.2e$+$0  & 2.2e$-1$ & 1.1e$+$0   \\
 				\hline
 				
%
 					{\tt P3}&$x_0(1.25)$  &{\z 15}  &  {1.2e$-$2}   &{\z 16}   & {4} & 1.5e$-1$ &  1.5e$-1$ & 1.5e$-1$  & 1.5e$-1$    \\
 					&$x_0(1.5)$ & {\z 17} &  	{  1.4e$-$2}		& {\z  18}  &  {4} &  3.2e$-1$ &  3.2e$-1$  & 3.2e$-1$ & 3.2e$-1$\\  
 					&$x_0(1.75)$ & {\z 19}   & { 1.4e$-$2} 	&  {\z  20}  &   {4} &  5.0e$-1$&  5.0e$-1$  & 5.1e$-1$ & 5.1e$-1$  \\  
 					&$x_0(2)$ & {\z 22}   & {  1.5e$-$2}  		&  {\z  23}   &  {4} &  6.9e$-1$ &  6.9e$-1$ & 7.0e$-1$ & 7.0e$-1$ \\
 				\hline
 				
	{\tt P4}&$x_0(1,1)$&{\z 17}  &  {  1.4e$-$2}   &{\z 18}   & {5} &  5.7e$-1$ &  5.7e$-1$ & 5.4e$-1$    & 5.4e$-1$    \\
&$x_0(0.5,0)$  & {\z20}   & { 1.3e$-$2}  		&  {\z 21}   &  {4}&  5.5e$-1$ &  5.5e$-1$  & * & *    \\
	&$x_0(1.5,1)$  & {\z 22}   & { 1.4e$-$2} 	&  {\z 23}  &   {4}  &  5.1e$-1$ &  5.1e$-$1  & 5.0e$-1$ & 5.0e$-1$  \\  
	
	& $x_0(1.5,0)$&  {\z 26}  &   { 1.5e$-$2}    & {\z 27} & {4} & 5.2e$-1$  & 8.8e$-1$   & * & *  \\ \hline\hline 	
		\end{tabular}
 		}
 		\caption{Results obtained by the regularizing trust-region method and the regularizing
 			Levenberg-Marquardt method with noise $\delta=10^{-2}$ and varying initial guesses.}\label{table2}
 	\end{table}
 \end{center}
 
Tables \ref{table1} and \ref{table2} show that the regularizing trust-region method solves all
the tests. By Step 3 of our Algorithm \ref{algoTR}, the difference between the
number of function evaluations and the number of trust-region iterations,
if greater than one, indicates the number of trial iterates that were rejected
because a sufficient reduction on $\Phi$ was not achieved. 
We observe that in 20 out of 32 runs, all the iterates generated were accepted;
this occurrence seems to indicate that the trust-region updating rule works well in practice. 

Further insight on the trust-region updating rule (\ref{tr_algo1})-(\ref{tr_algo2})
can be gained analyzing the regularizing properties 
of the implemented trust-region strategy. First, we verified numerically that, 
though not explicitly enforced, the $q$-condition is satisfied in most of the iterations.
As an illustrative example, we consider problem P2 with 
$\delta=10^{-4}$ and $x_0=0e$  and,  in the left plot in Figure \ref{fig:figure9},
we display the values 
$q_k=\frac{\vert\vert J(\xkd)p_k+F(\xkd)\vert\vert}{\vert\vert F(\xkd)\vert\vert }$ 
at the trust-region iterations, marked by an asterisk,  and the value 
$q=1.1/\tau\approx0.733$  fixed in our experiments, depicted by a solid line.  
We observe that, even if we have not imposed the $q$-condition, it is satisfied 
at  most of the iterations. 
The plot on the right of  Figure \ref{fig:figure9}  shows
a monotone decay of the error between $\xkd$ and $x^\dagger$ through the iterations,
which results to  be in accordance with the theoretical results in Theorem \ref{lambdaTR_bound_noise}.
The regularizing properties of the implemented trust-region scheme are also shown in 
Figure \ref{fig:figure10} where, for each test problem we plot the error    
 $\vert\vert x_{k^*}^\delta-x^{\dagger}\vert\vert$ for decreasing noise levels;
 it is evident that, in accordance with theory,
the error decays as the noise level decreases.

\begin{figure}[h]
\centering
\includegraphics[width=2.4in,height=2in]{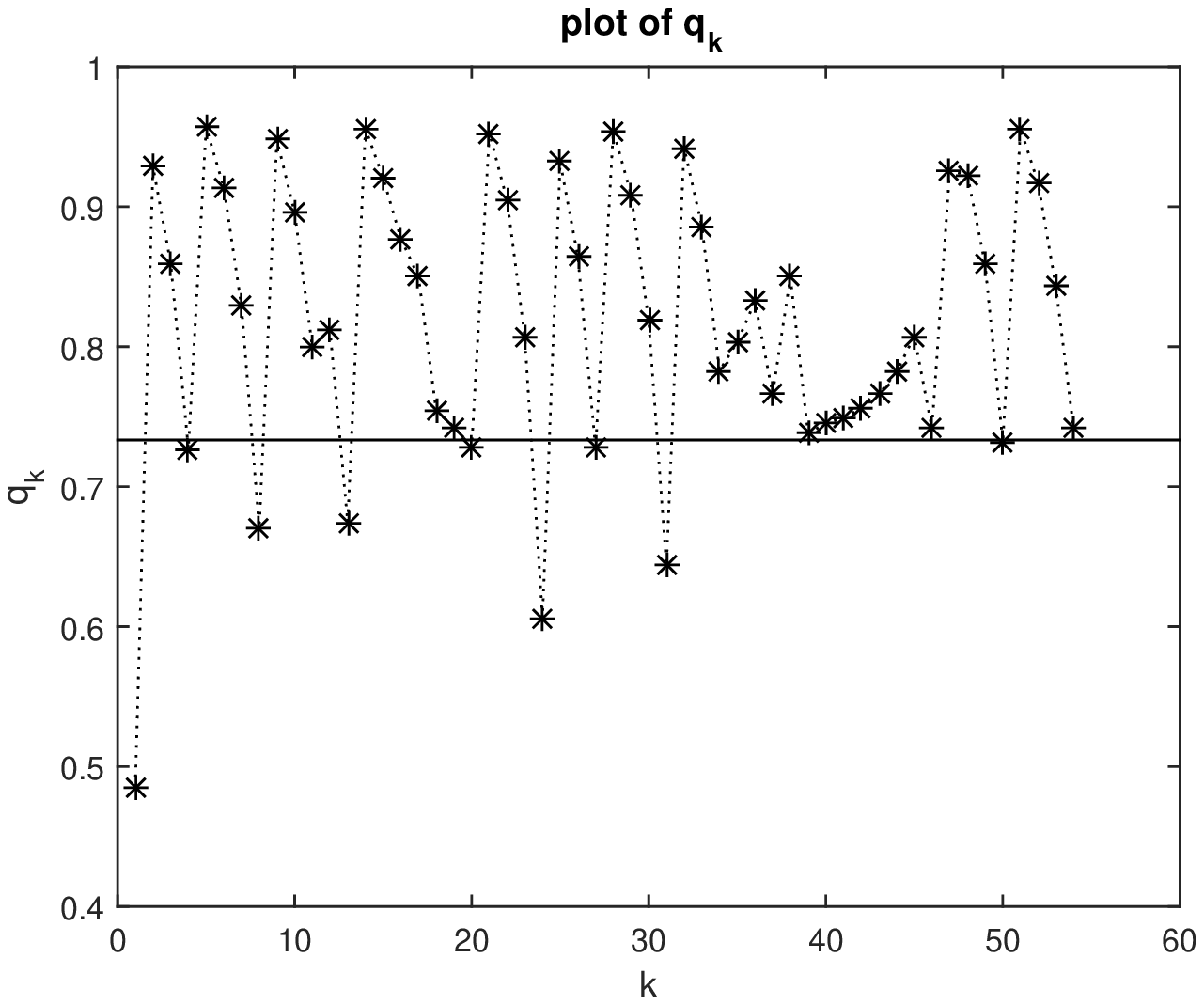}
\includegraphics[width=2.4in,height=2in]{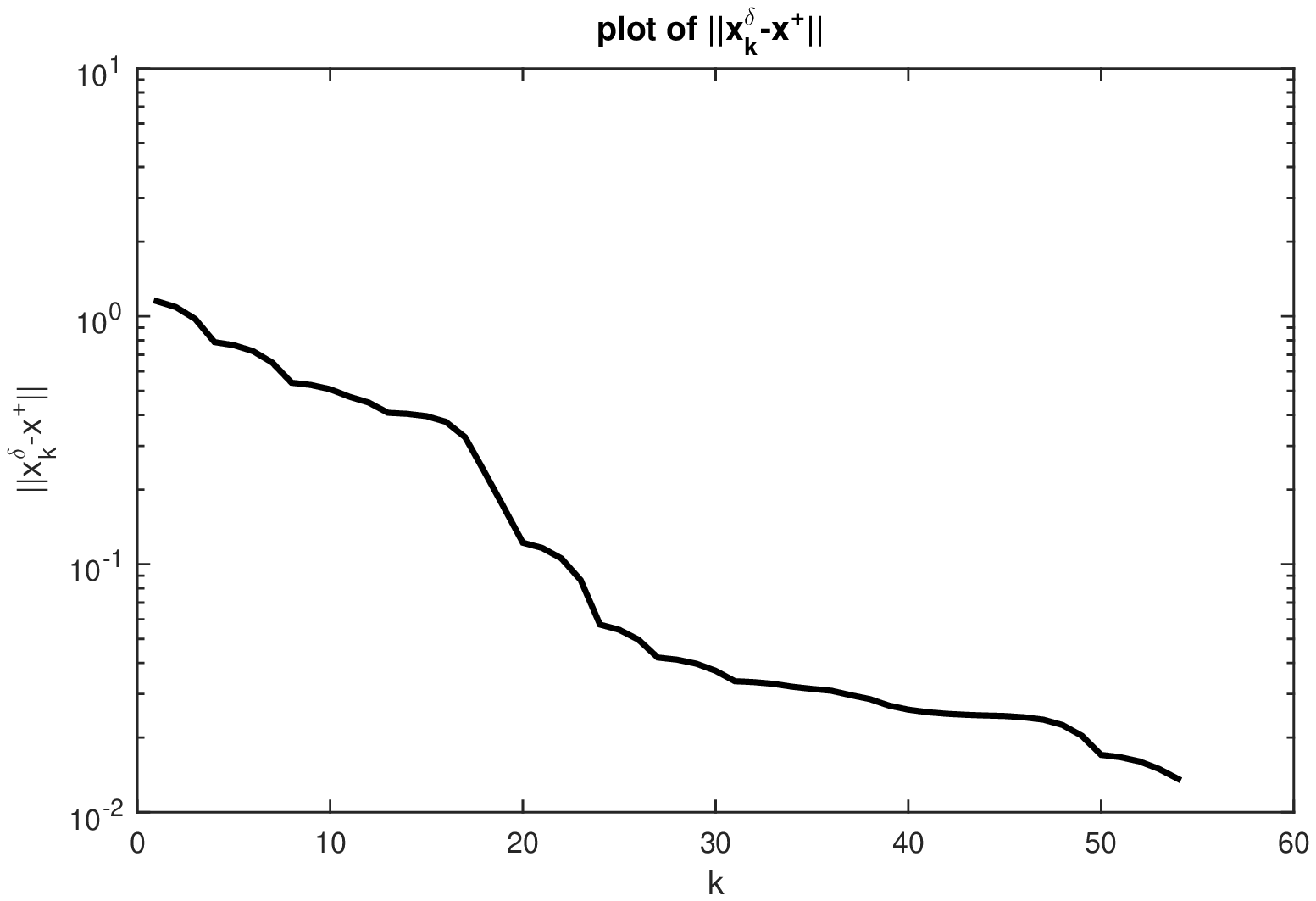}
\caption{Regularizing trust-region applied to P2, $x_0=0e$, $\delta=10^{-4}$: values $q_k=\frac{\vert\vert J(\xkd)p_k+F(\xkd)\vert\vert}{\vert\vert F(\xkd)\vert\vert }$ (marked by an asterisk) and value of $q=1.1/\tau$ (solid line) versus the  iterations (on the left);  semilog plot  of the error  $\vert\vert x_k^{\delta}-x^{\dagger}\vert\vert$  versus the  iterations (on the right).}
\label{fig:figure9}
\end{figure}

\begin{figure}[h]
\centering
\includegraphics[width=2.4in,height=2in]{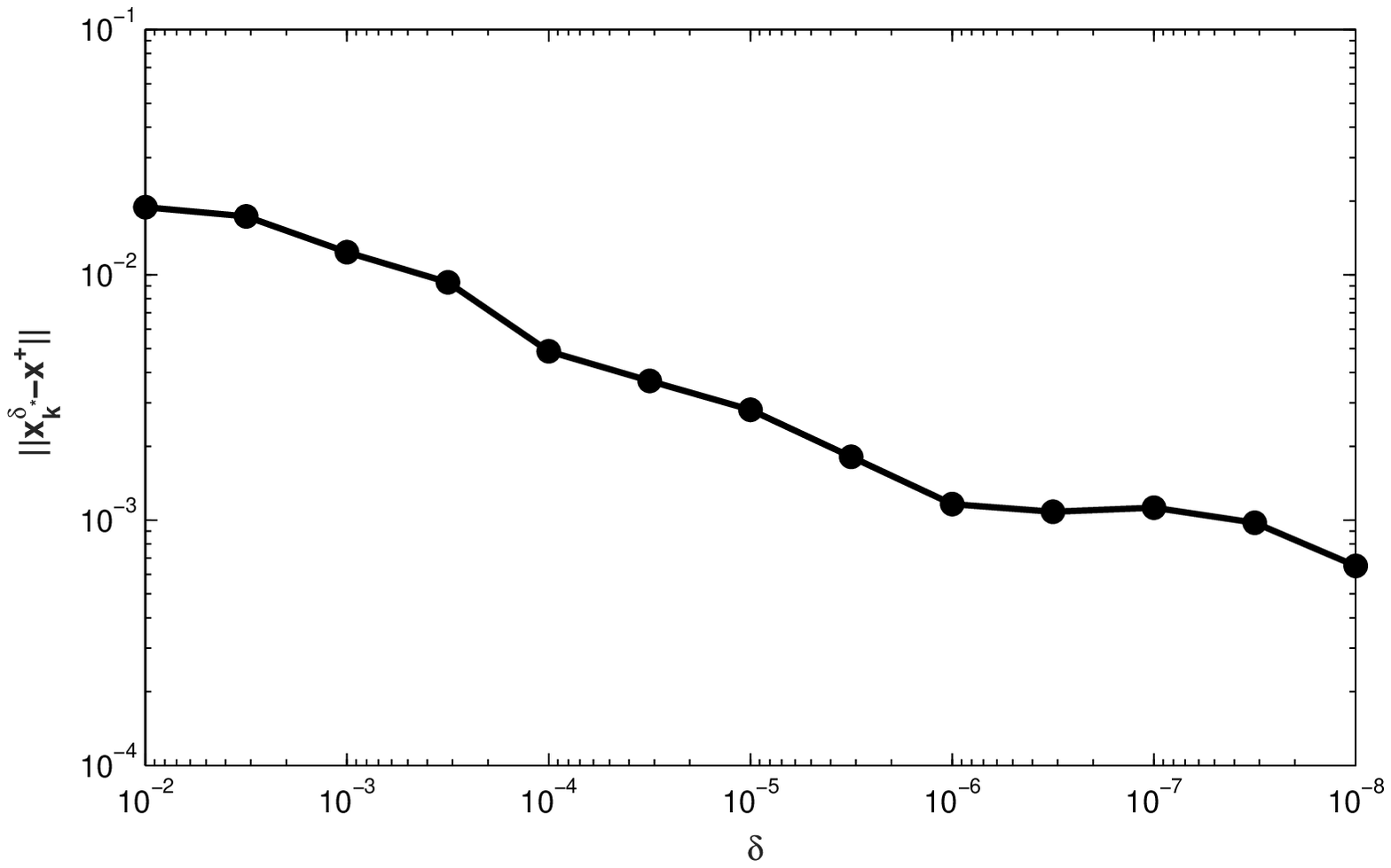}
\includegraphics[width=2.4in,height=2in]{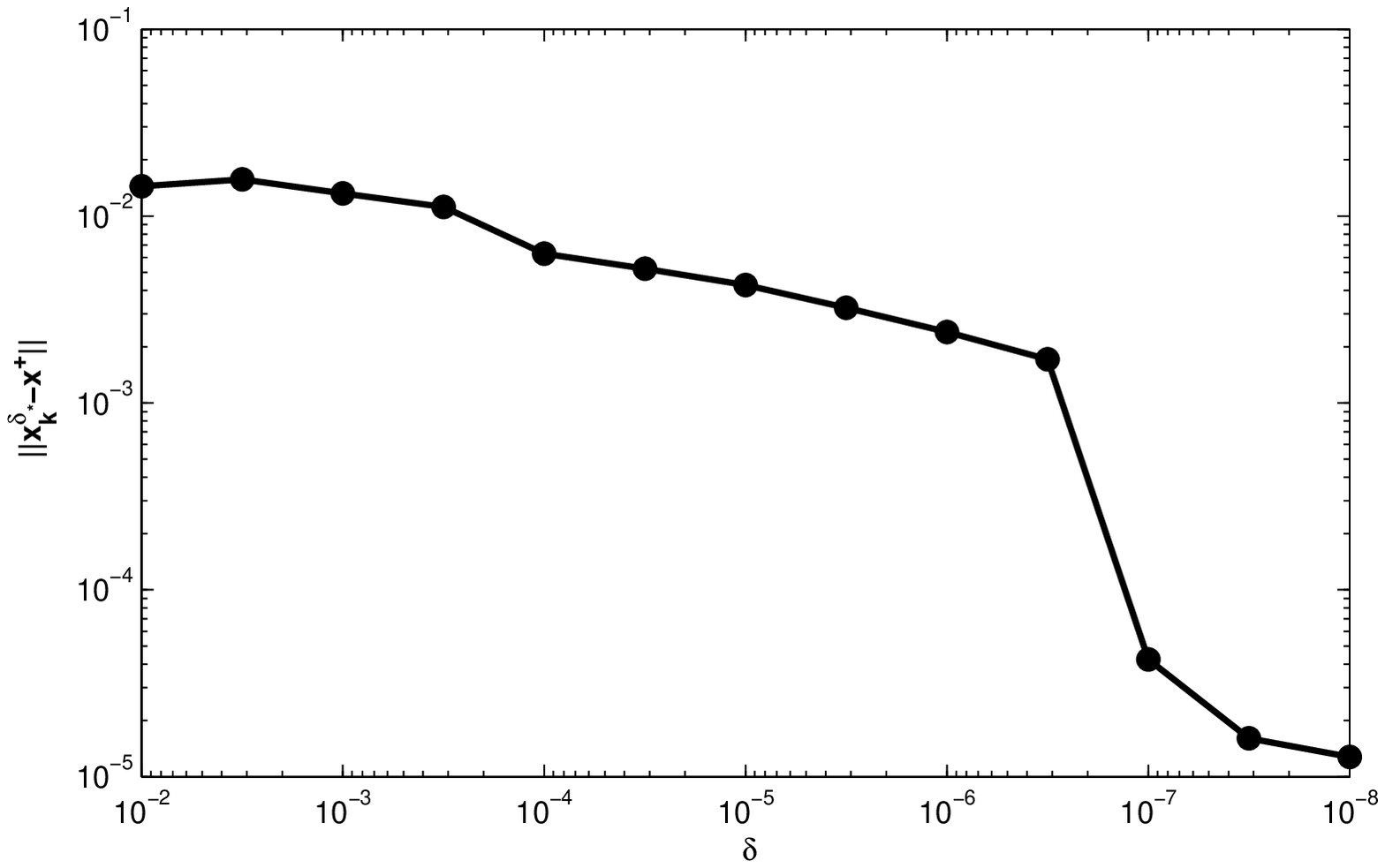}
\includegraphics[width=2.4in,height=2in]{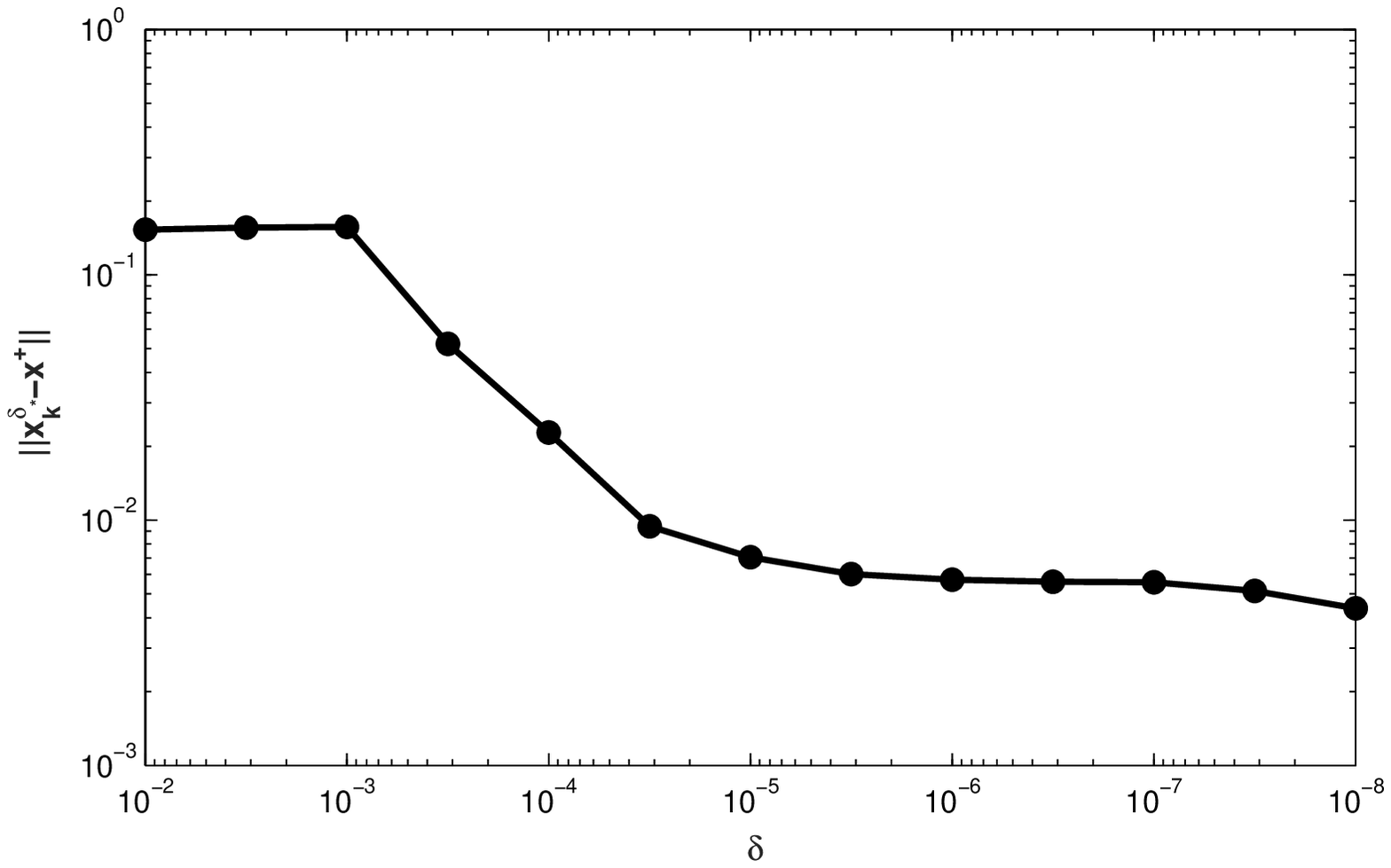}
\includegraphics[width=2.4in,height=2in]{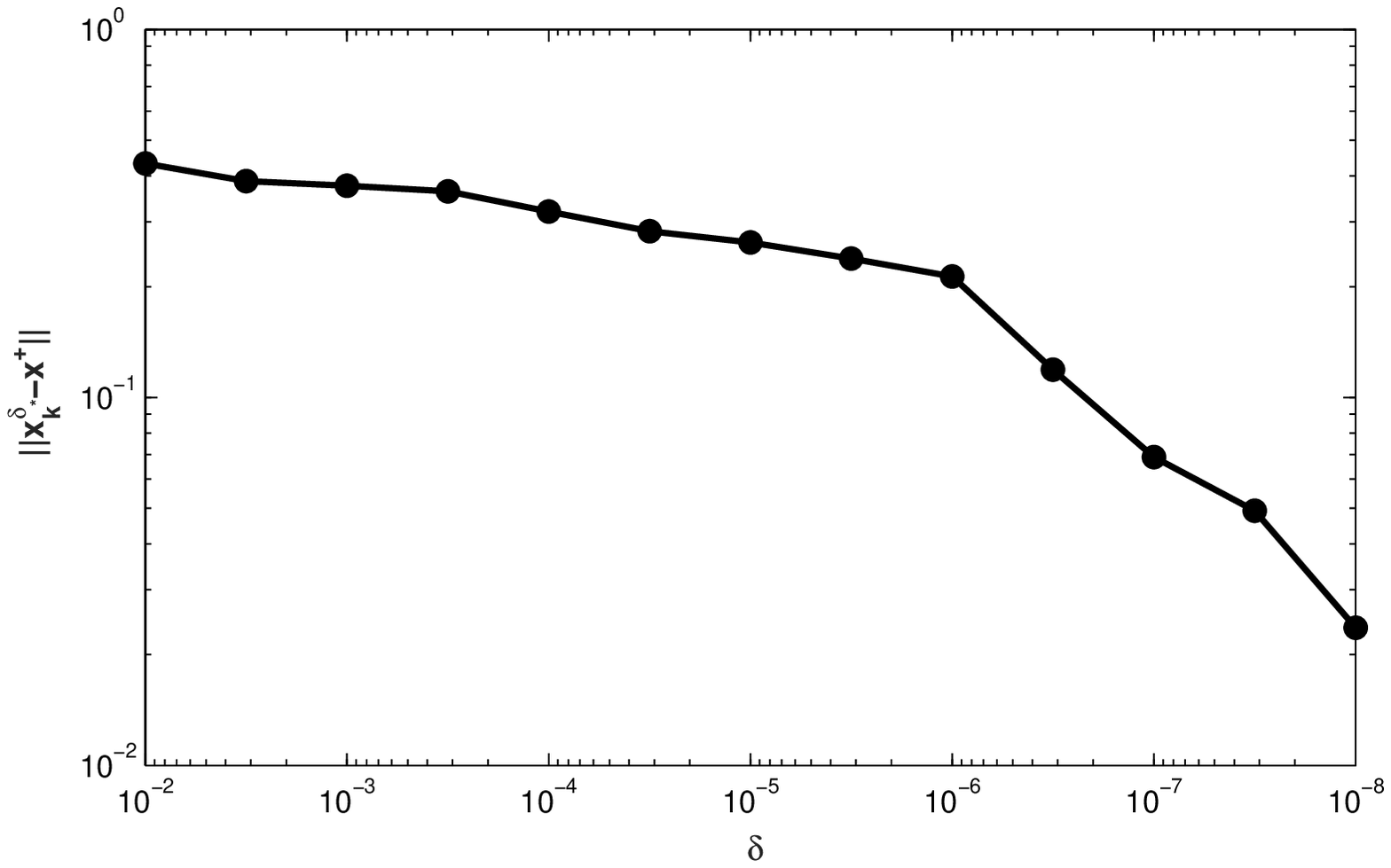}
\caption{Regularizing trust-region applied to P1, $x_0=0e$ (top left), P2, $x_0=0e$ (top right), P3, 
$x_0=x_0(\alpha)=x_0(1.25)$ (lower left) and to P4, $x_0=x_0(\beta, \chi)=x_0(0.5,0)$ (lower right):
log plot of the error  $\vert\vert x_{k^*}^{\delta}-x^{\dagger}\vert\vert$  
versus the noise $\delta$.}
\label{fig:figure10}
\end{figure}

Let now compare the regularizing trust-region and  Levenberg-Marquardt procedures. 
On runs successful for both methods, the two methods provide solutions
of similar accuracy and such an accuracy increases with the vicinity 
of the  initial guess to the true solution; as an example Figure \ref{fig:figure20} shows the solutions computed by 
the two methods for   problems P1 and  P3  for $\delta=10^{-2}$. On the other hand, 
for  large noise $\delta$ and initial guesses farther from the true solution,  for both methods the accuracy at the endpoints of the interval
$[0,1]$ may deteriorate; for this occurrence we refer to Table \ref{table2} and runs on 
problems P1 and P2.
Concerning failures,  in 7 runs out of 32 the Levenberg-Marquardt algorithm does not act as a regularizing method as the generated sequence  
approaches a solution of the noisy problem.
In Figure \ref{fig:figure4} we illustrate two unsuccessful runs of the Levenberg-Marquardt method;
approximated solution computed by the regularizing trust-region and  Levenberg-Marquardt procedures are shown for 
runs on problems P2 and P4.

\begin{figure}[h]
	\centering
	\includegraphics[width=2.4in,height=2in]{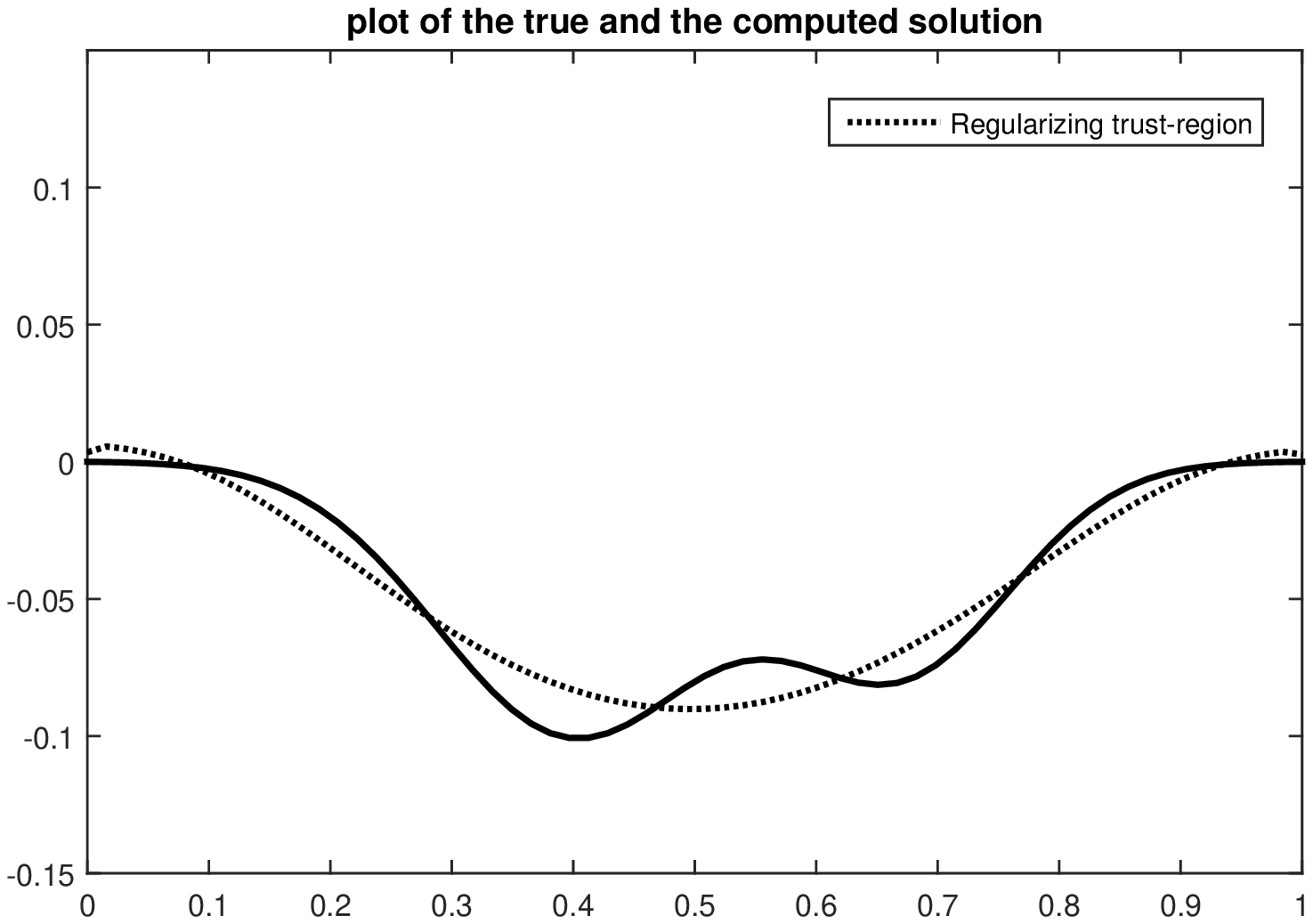}
	\includegraphics[width=2.4in,height=2in]{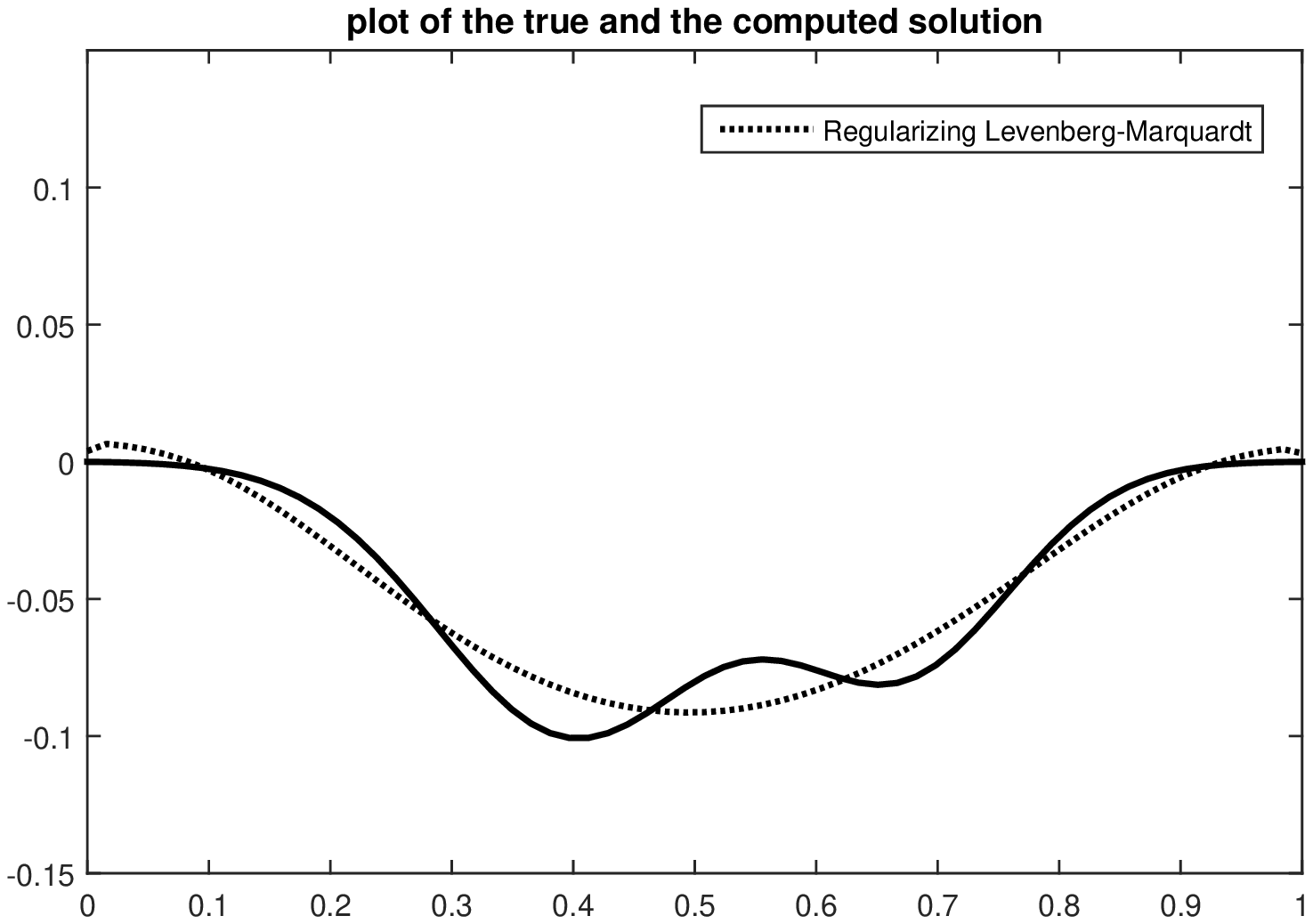}
	\includegraphics[width=2.4in,height=2in]{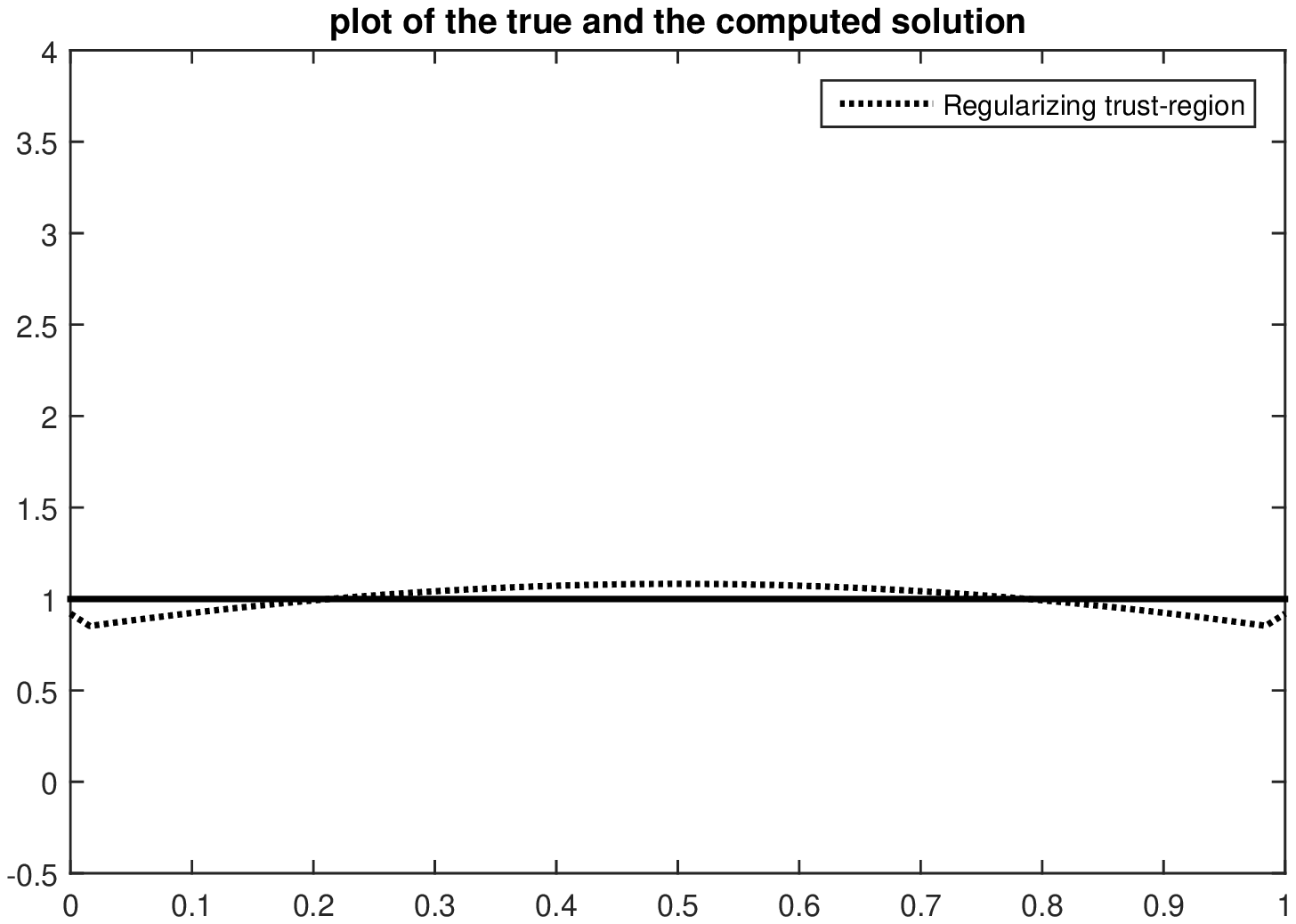}
	\includegraphics[width=2.4in,height=2in]{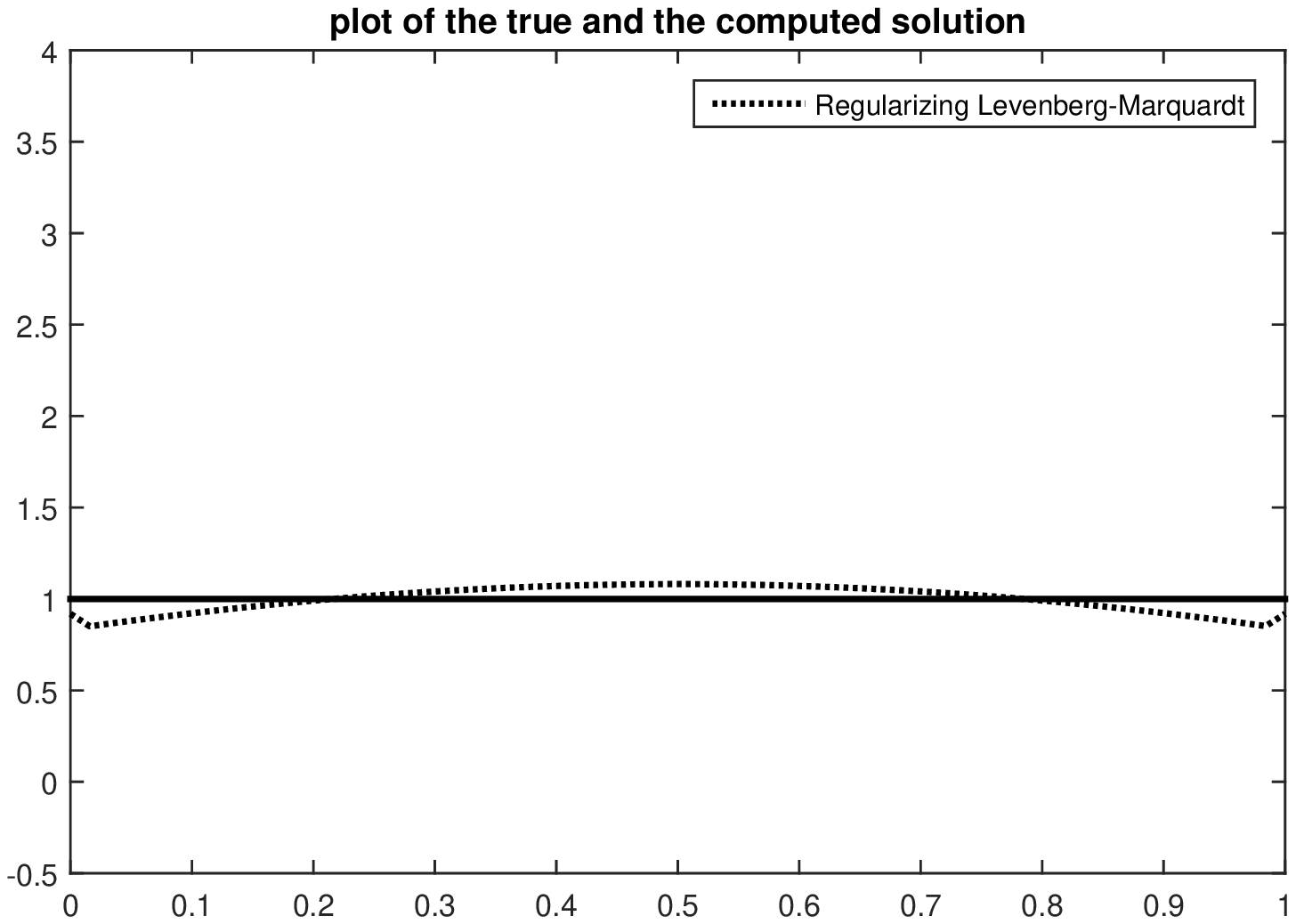}
	\caption{Regularizing trust-region (left) and regularizing Levenberg-Marquardt (right), true solution (solid line) and approximate solutions (dotted line). Upper part: P1, $\delta=10^{-2}$, $x_0=0e$; lower part: P3, $\delta=10^{-2}$, $x_0=x_0(\alpha)=x_0(1.25)$.}
	\label{fig:figure20}
\end{figure}

\begin{figure}[h]
	\centering
	\includegraphics[width=2.4in,height=2in]{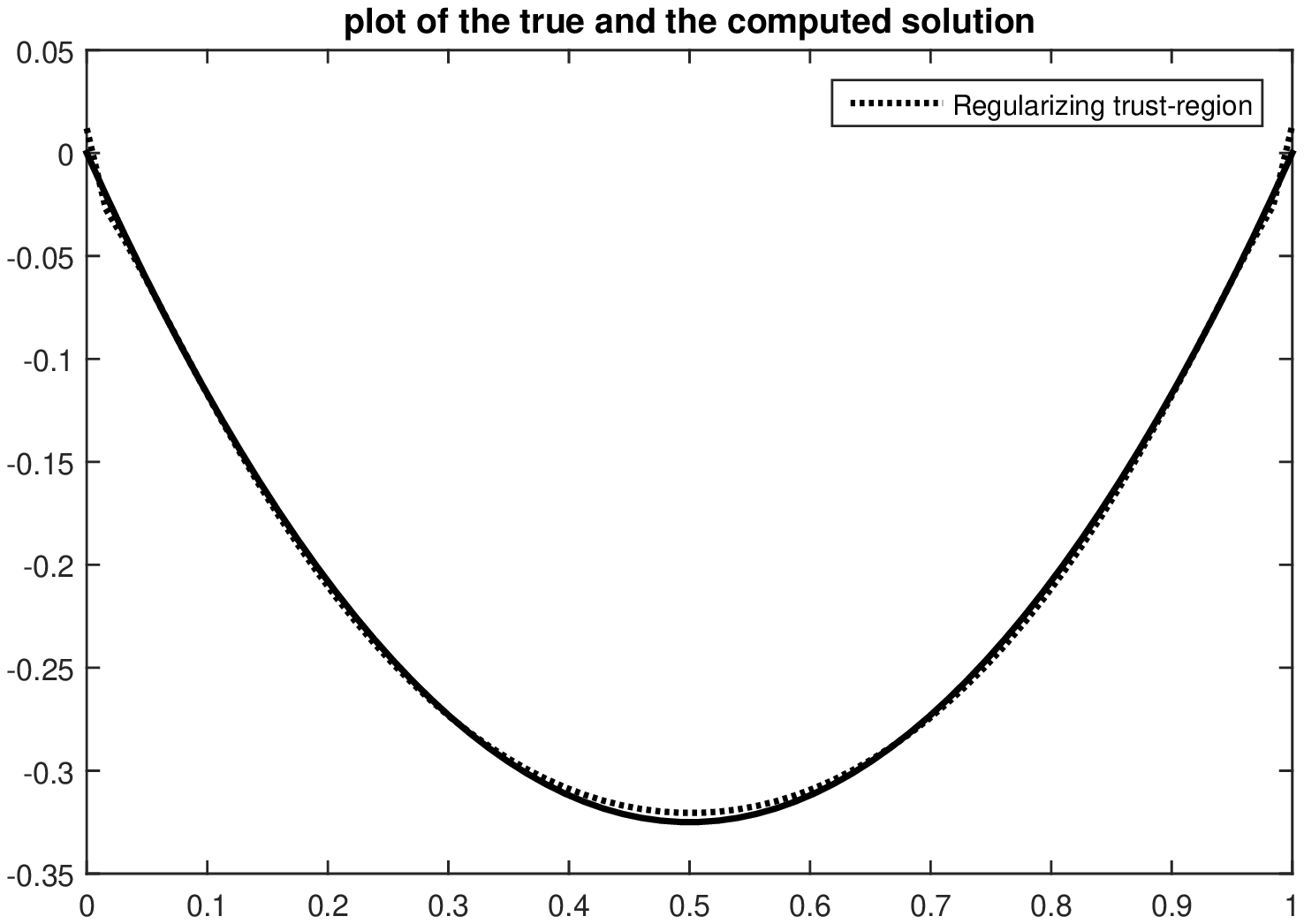}
	\includegraphics[width=2.4in,height=2in]{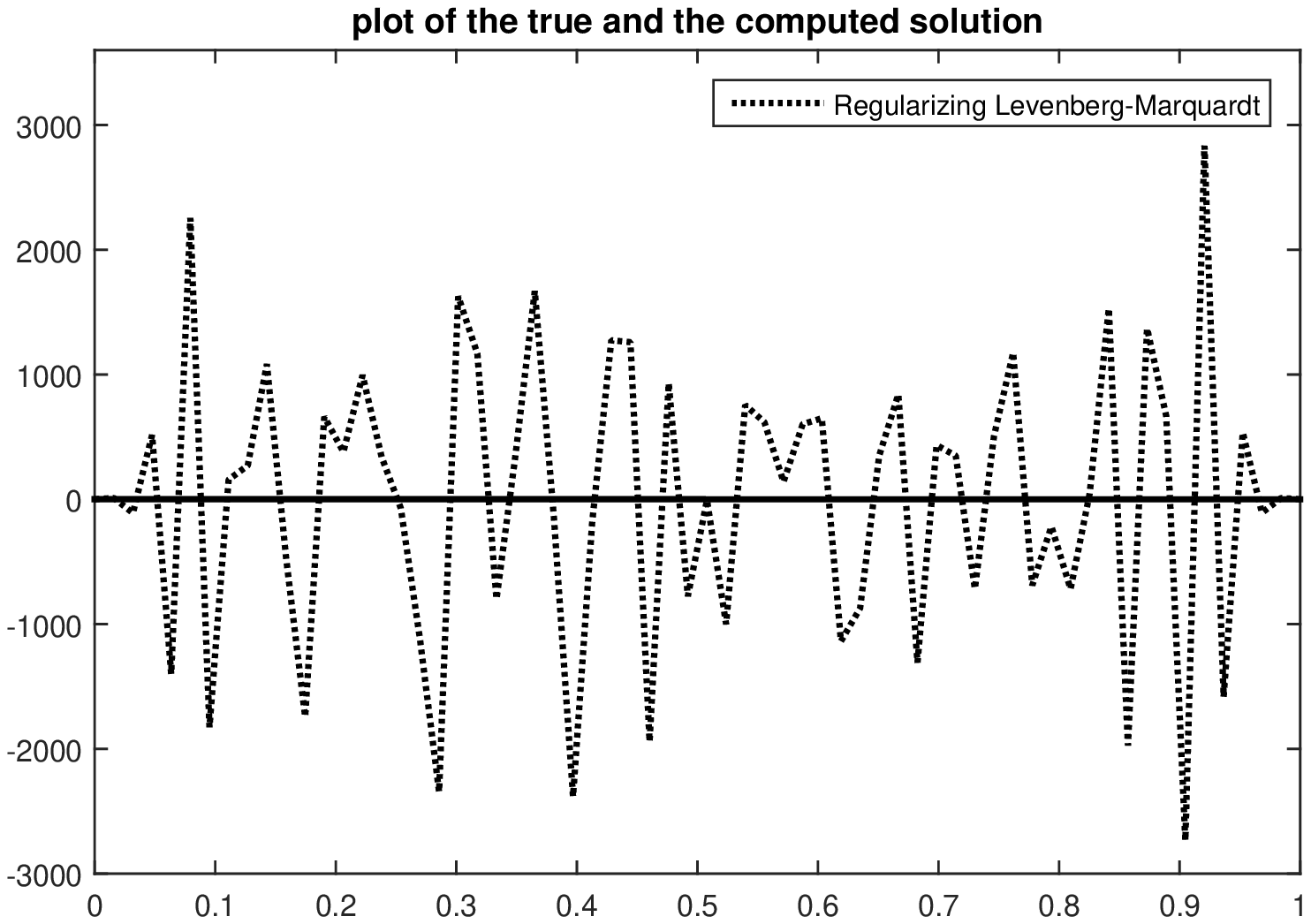}
	\includegraphics[width=2.4in,height=2in]{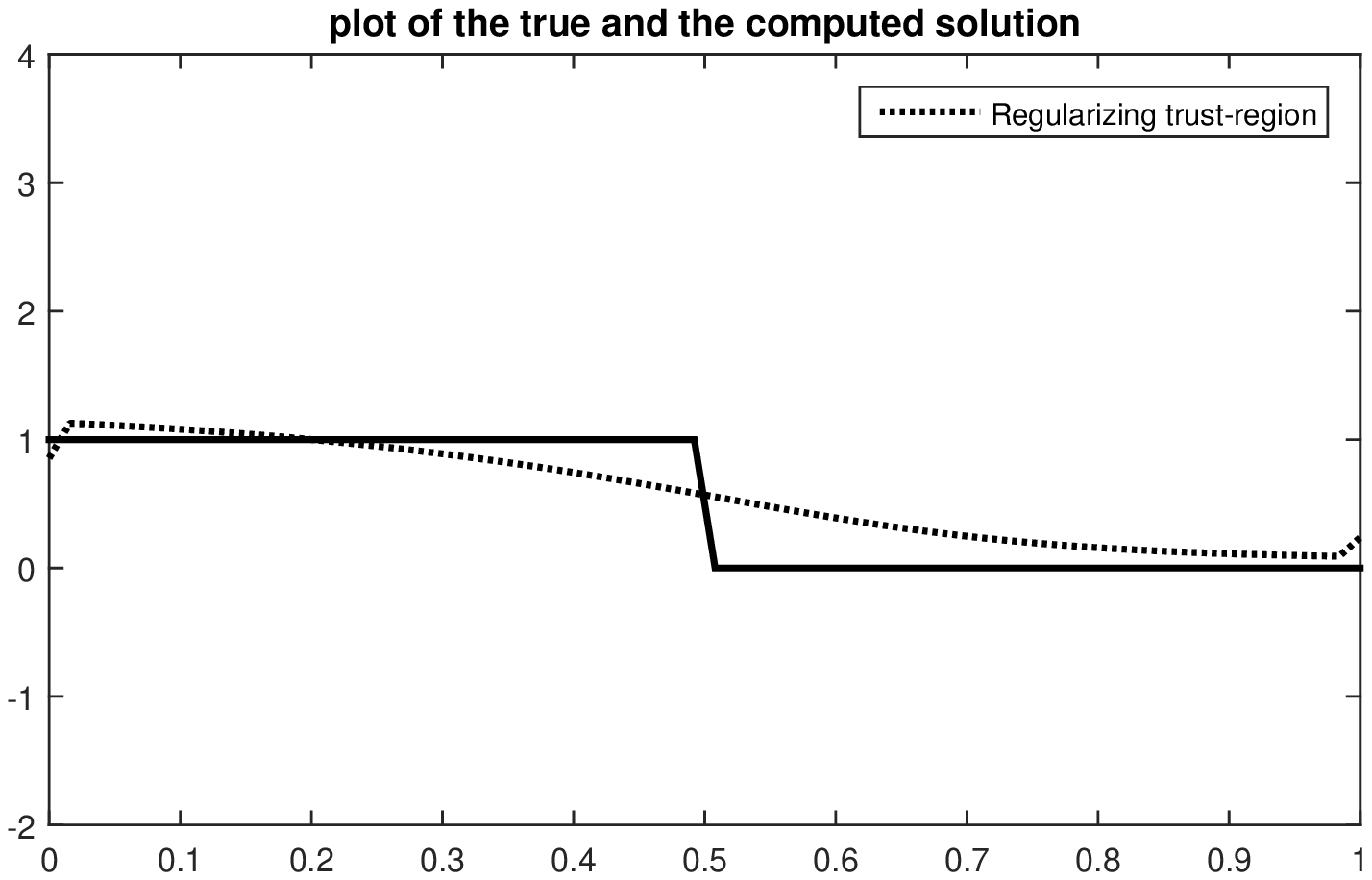}
	\includegraphics[width=2.4in,height=2in]{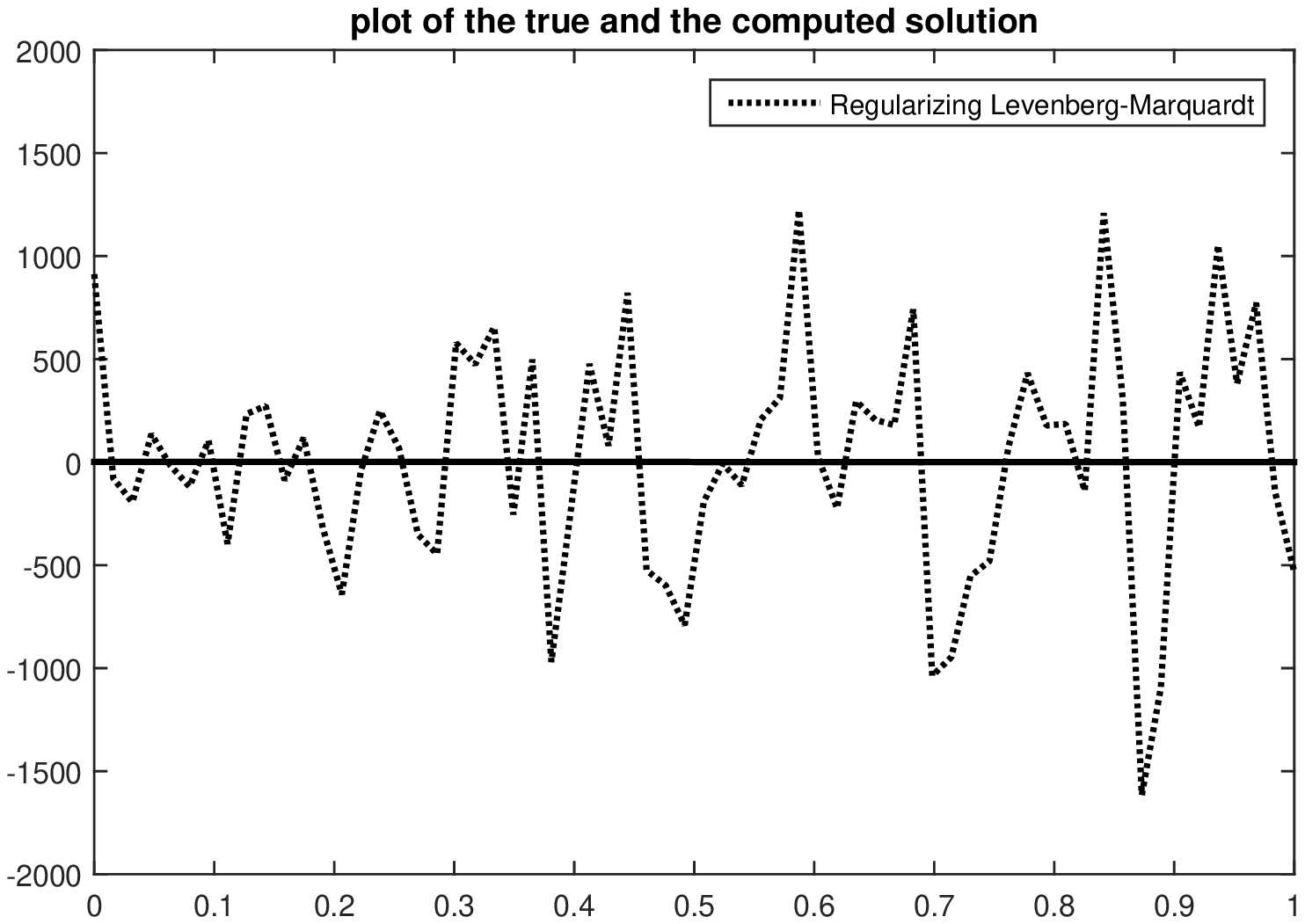}
	\caption{True solution (solid line) and approximate solutions (dotted line) computed by the regularizing  trust-region method (on the left) and the regularizing Levenberg-Marquardt method (on the right). Upper part: problem P2, $\delta=10^{-2}, x_0=0e$; 
	lower part:    problem P4,  $\delta=10^{-2}, x_0=x_0(\beta, \chi)=x_0(0.5,0)$.
	}
	\label{fig:figure4}
\end{figure}

The overall experience on  the Levenberg-Marquardt algorithm seems to indicate that 
the  use of the $q$-condition is more flexible  than condition (\ref{seculare_q})
and provides stronger regularizing properties. In order to
support this claim, in Figure \ref{fig:LM_q} we  report 
four solution approximations computed by the Levenberg-Marquardt algorithm
for varying values of $q$, i.e.  $q=0.67,0.70,0.73,0.87$.  It is evident that
the method is highly sensitive to the choice of the parameter $q$ and the quality of the solution approximation does not steadily improves as $q$ increases.

\begin{figure}[h]
	\centering
	\includegraphics[width=2.4in,height=2in]{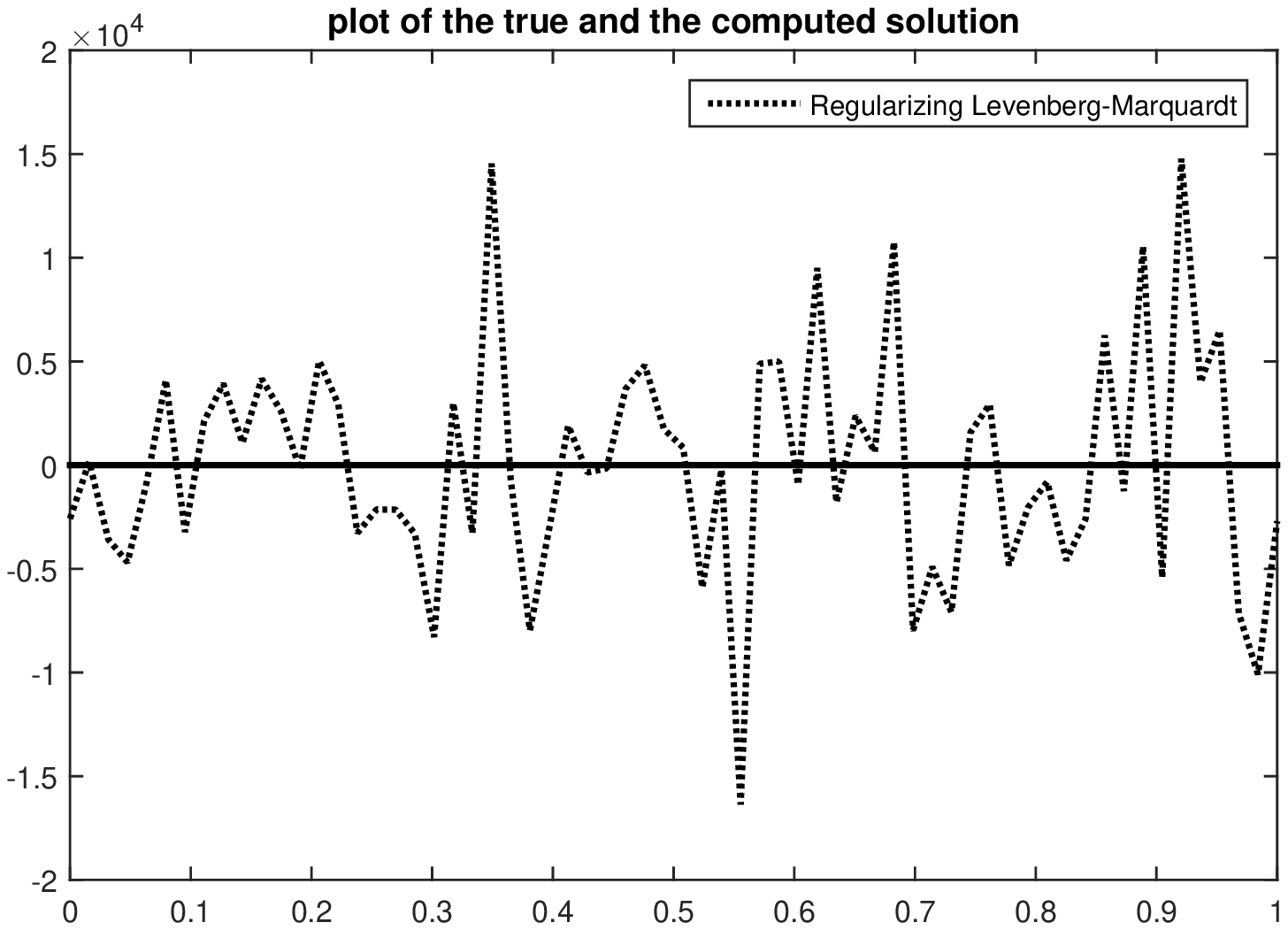}
	\includegraphics[width=2.4in,height=2in]{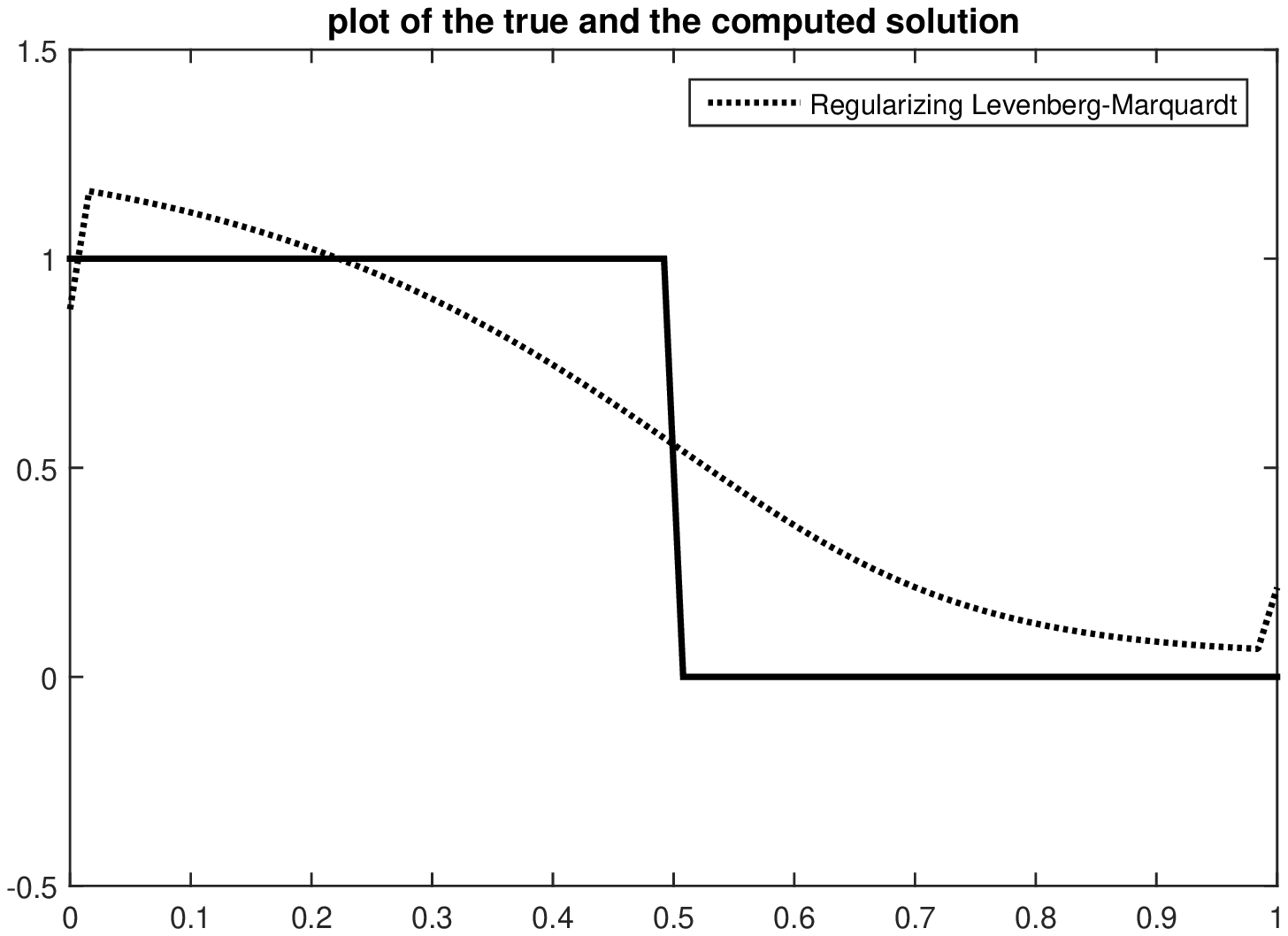}
	\includegraphics[width=2.4in,height=2in]{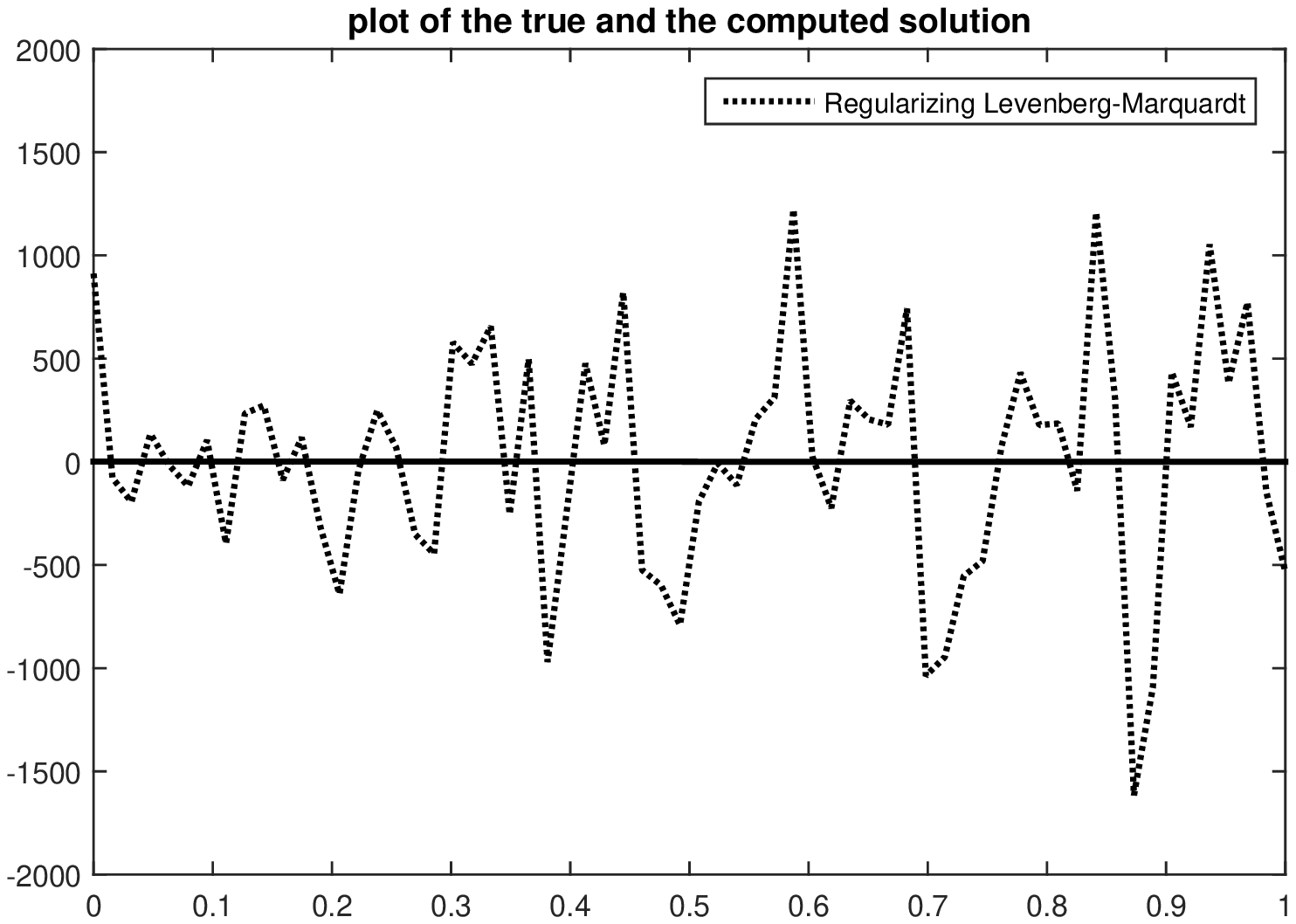}
	\includegraphics[width=2.4in,height=2in]{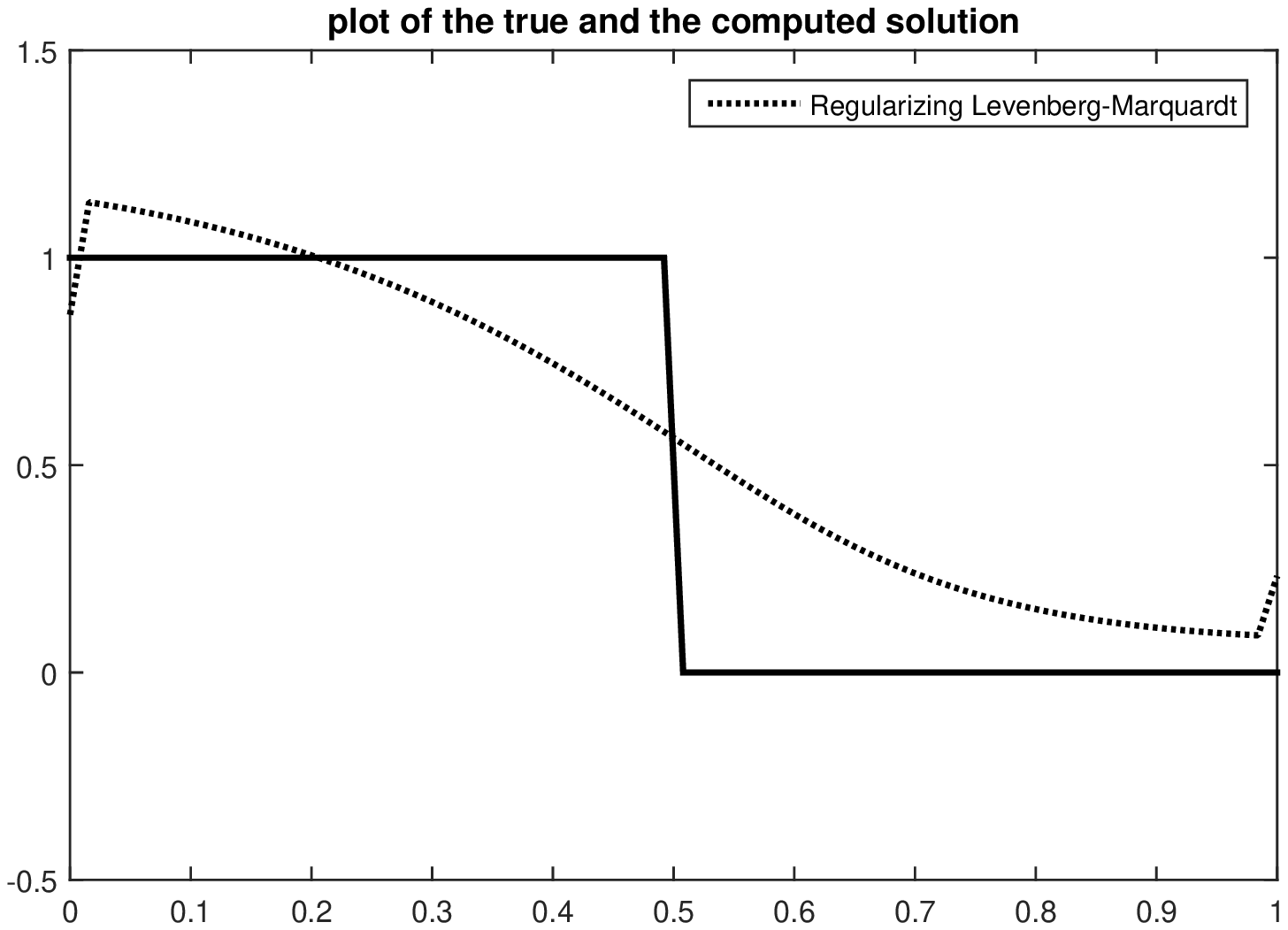}
	\caption{Problem P4, $\delta=10^{-2}$, $x_0=x_0(\beta,\chi)=x_0(1.5,0)$:
	approximate solution  computed by the regularizing Levenberg-Marquardt  
	method for values of $q=0.67,\, 0.70,\, 0.73,\, 0.87$. }
	\label{fig:LM_q}
\end{figure}

%
%

  \begin{figure}
  	\subfigure[]{\includegraphics[width=2.4in,height=2in]{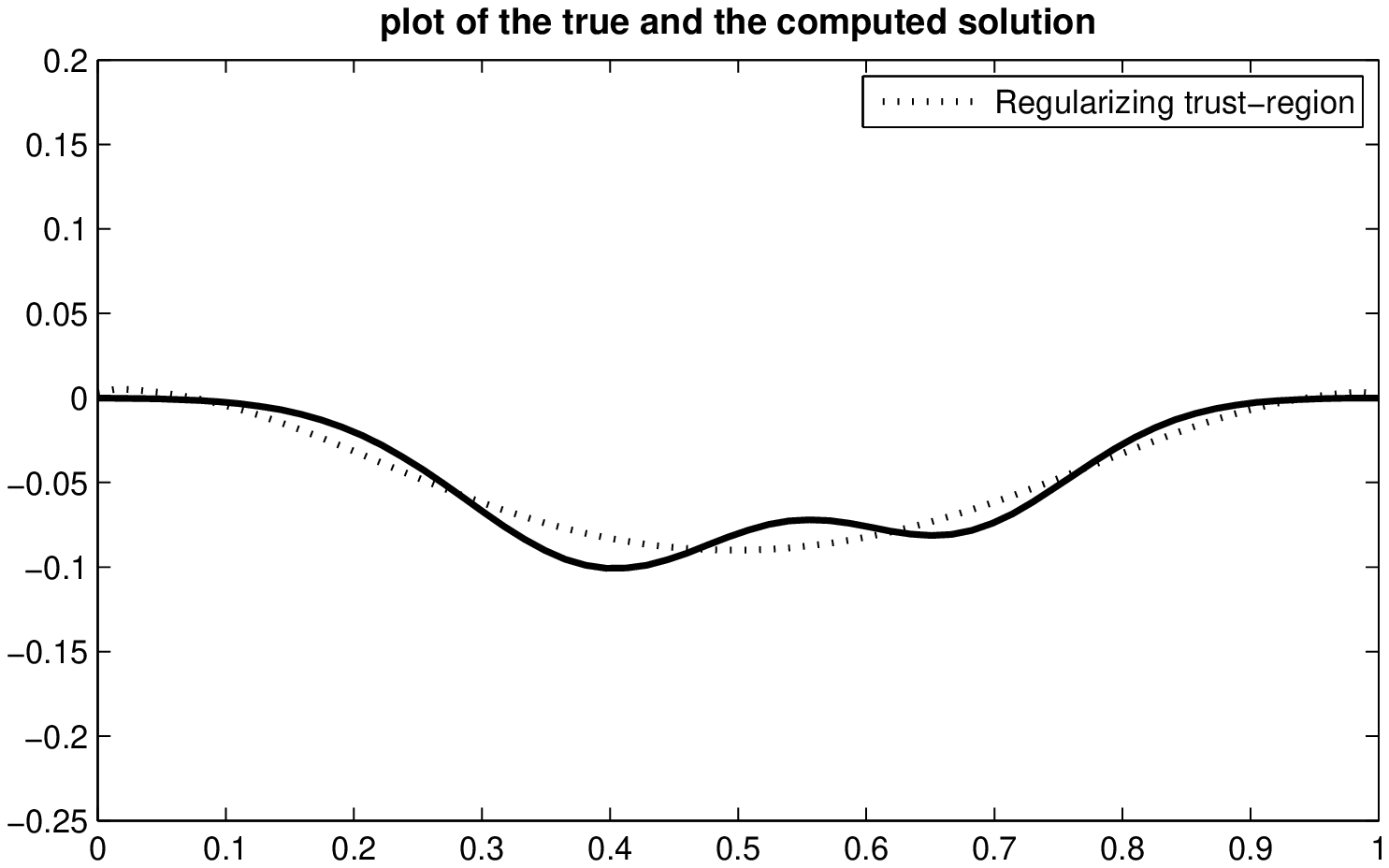}}
  	\subfigure[]{\includegraphics[width=2.4in,height=2in]{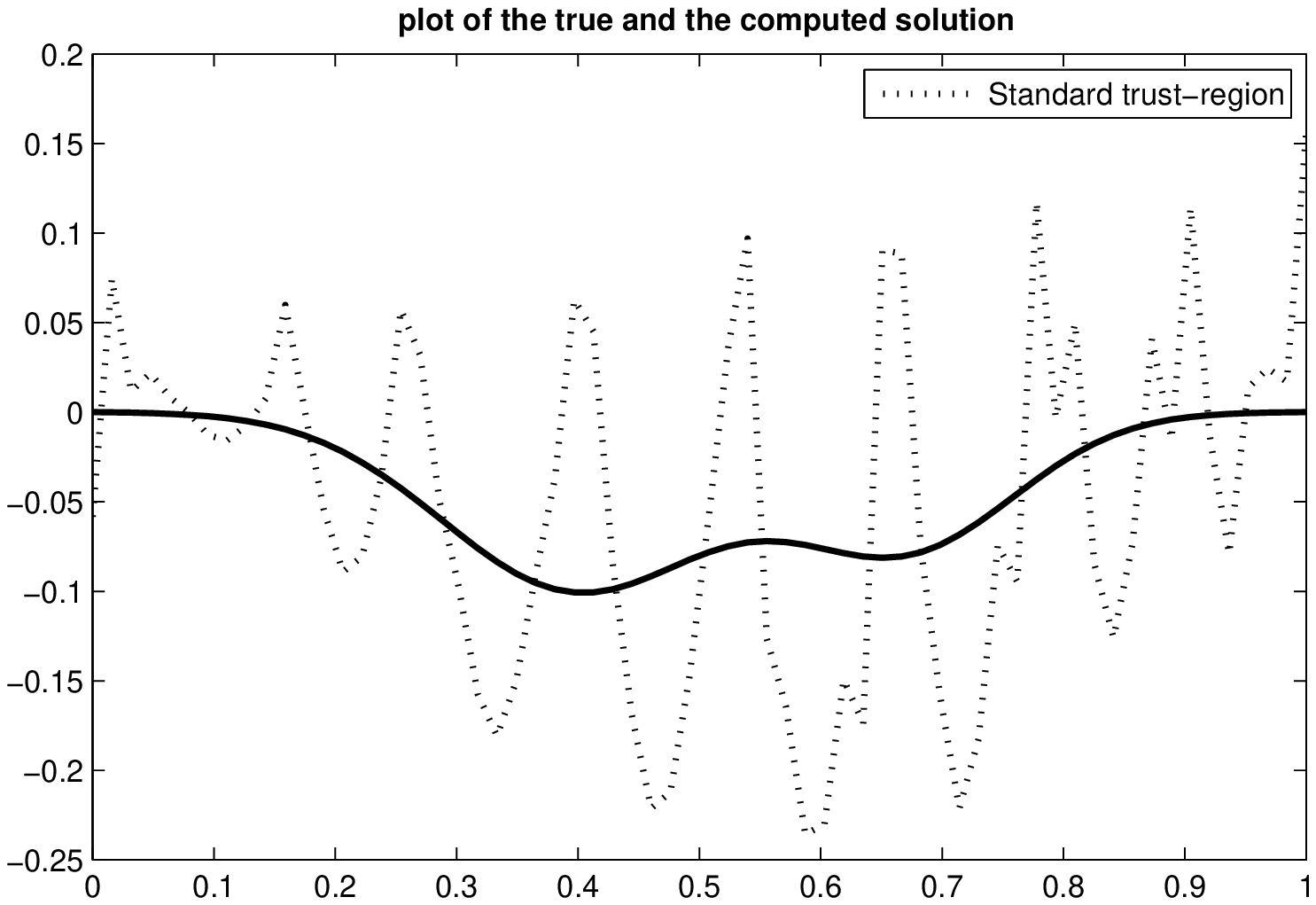}}
  	\subfigure[]{\includegraphics[width=2.4in,height=2in]{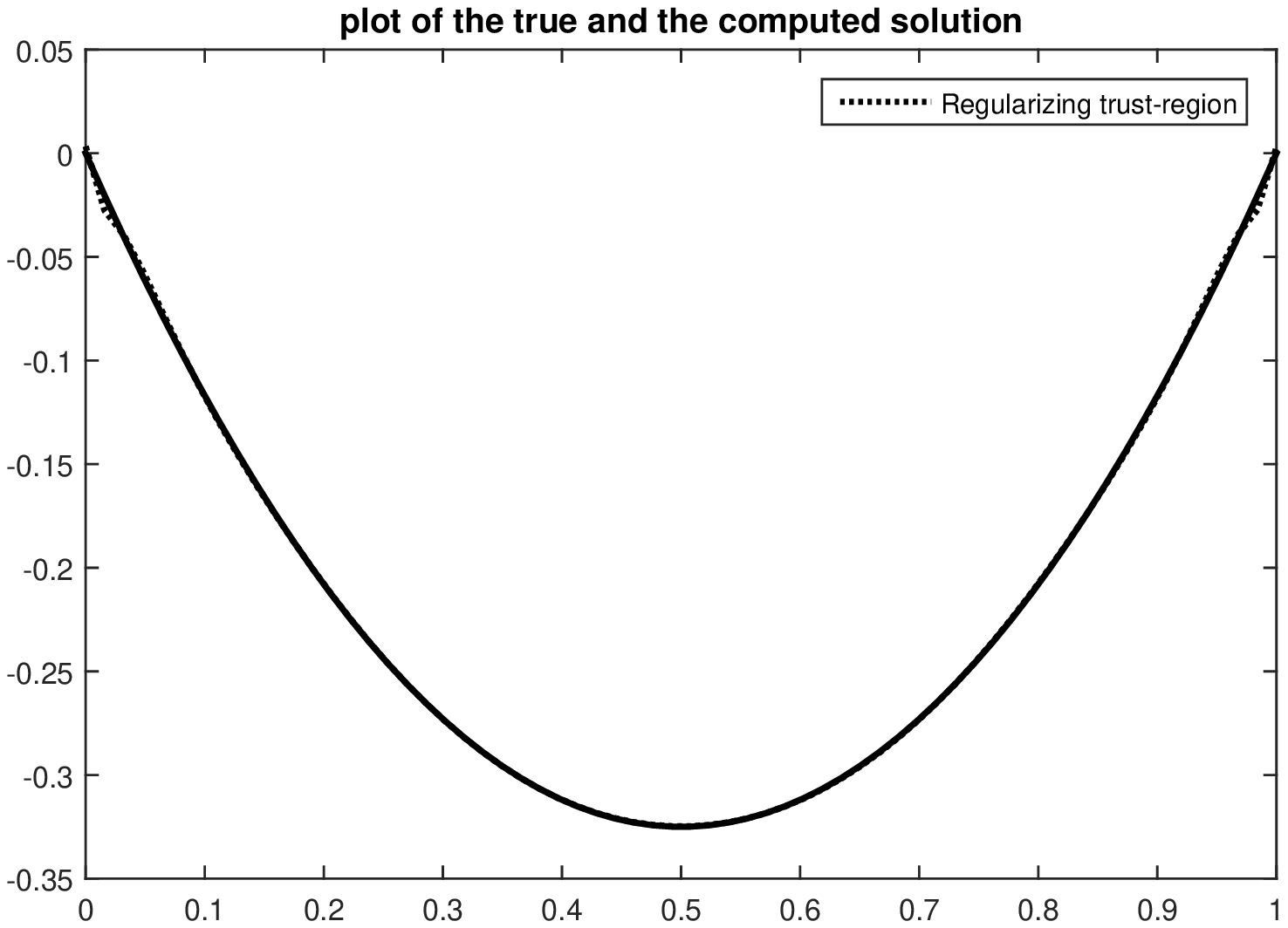}}
  	\subfigure[]{\includegraphics[width=2.4in,height=2in]{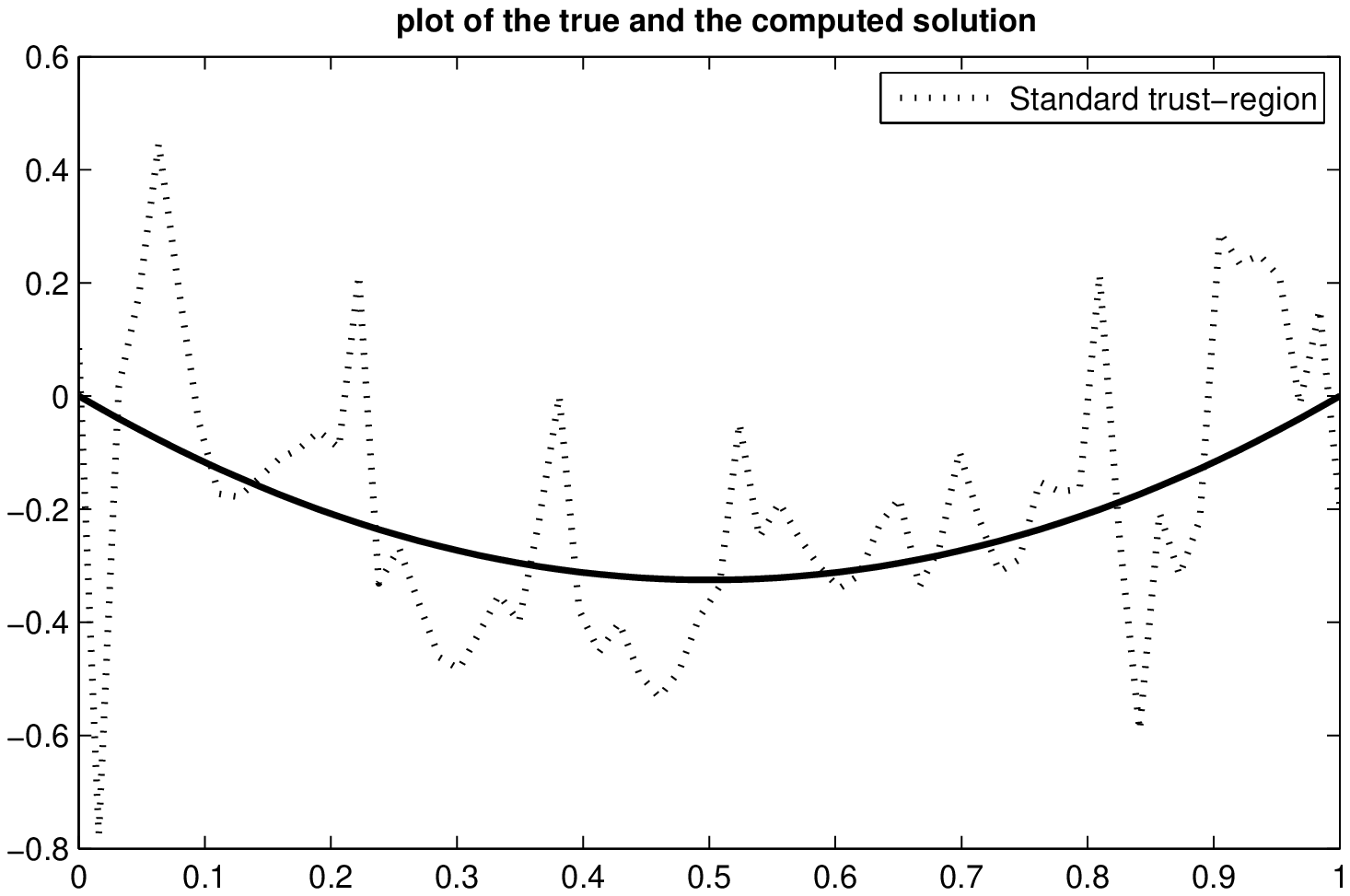}}
  	\subfigure[]{\includegraphics[width=2.4in,height=2in]{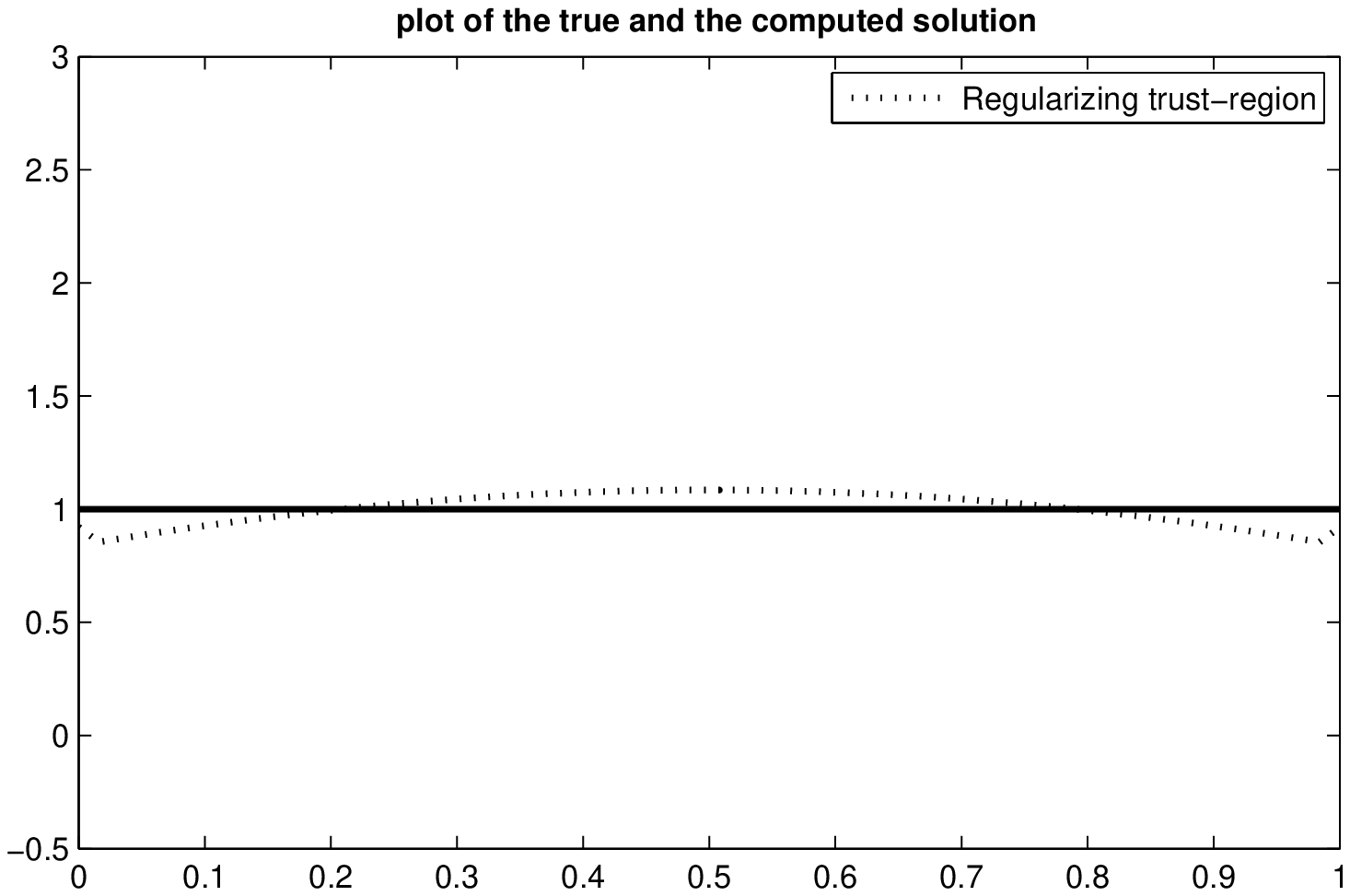}}
  	\subfigure[]{\includegraphics[width=2.4in,height=2in]{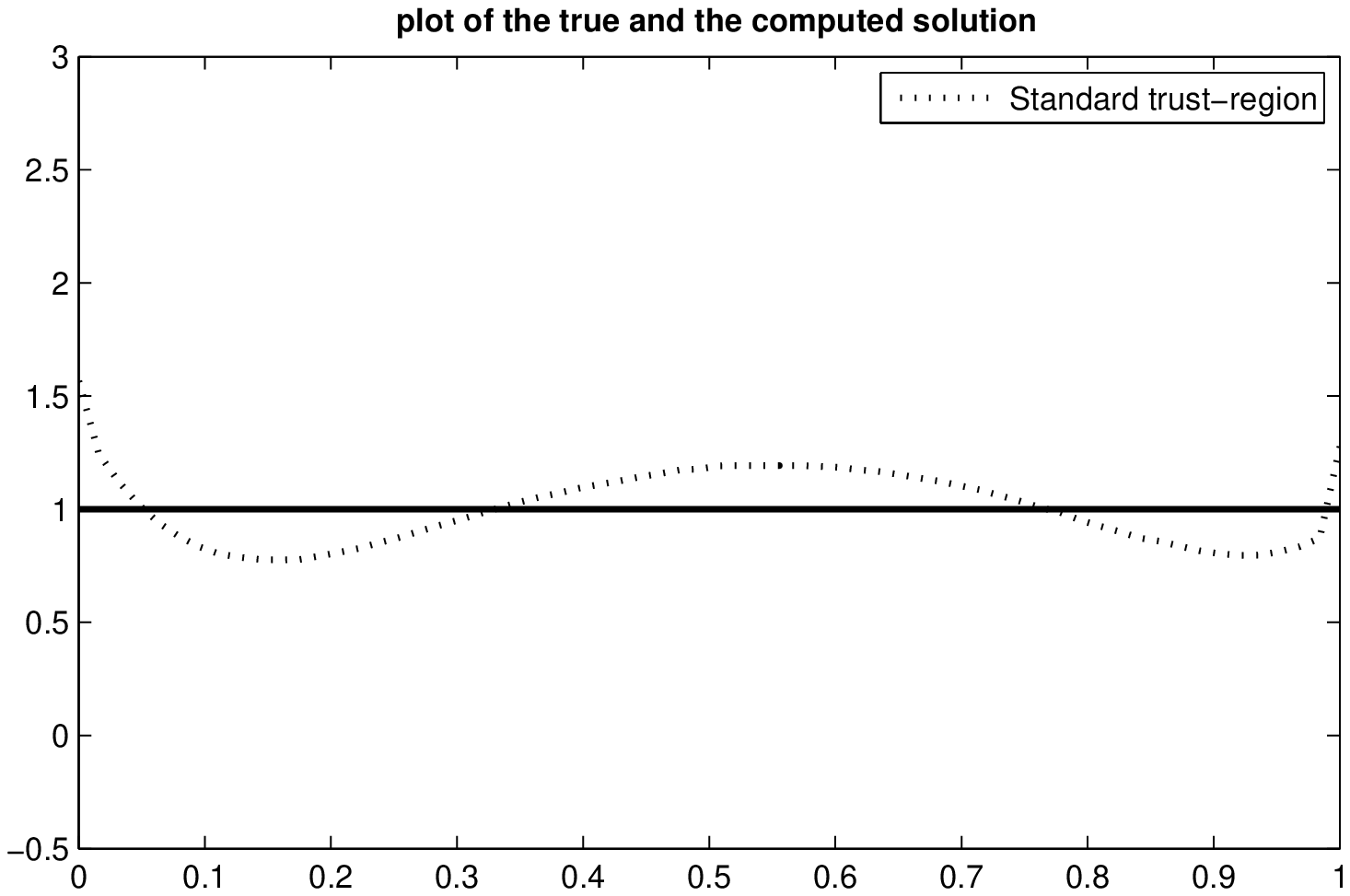}}
  	\subfigure[]{\includegraphics[width=2.4in,height=2in]{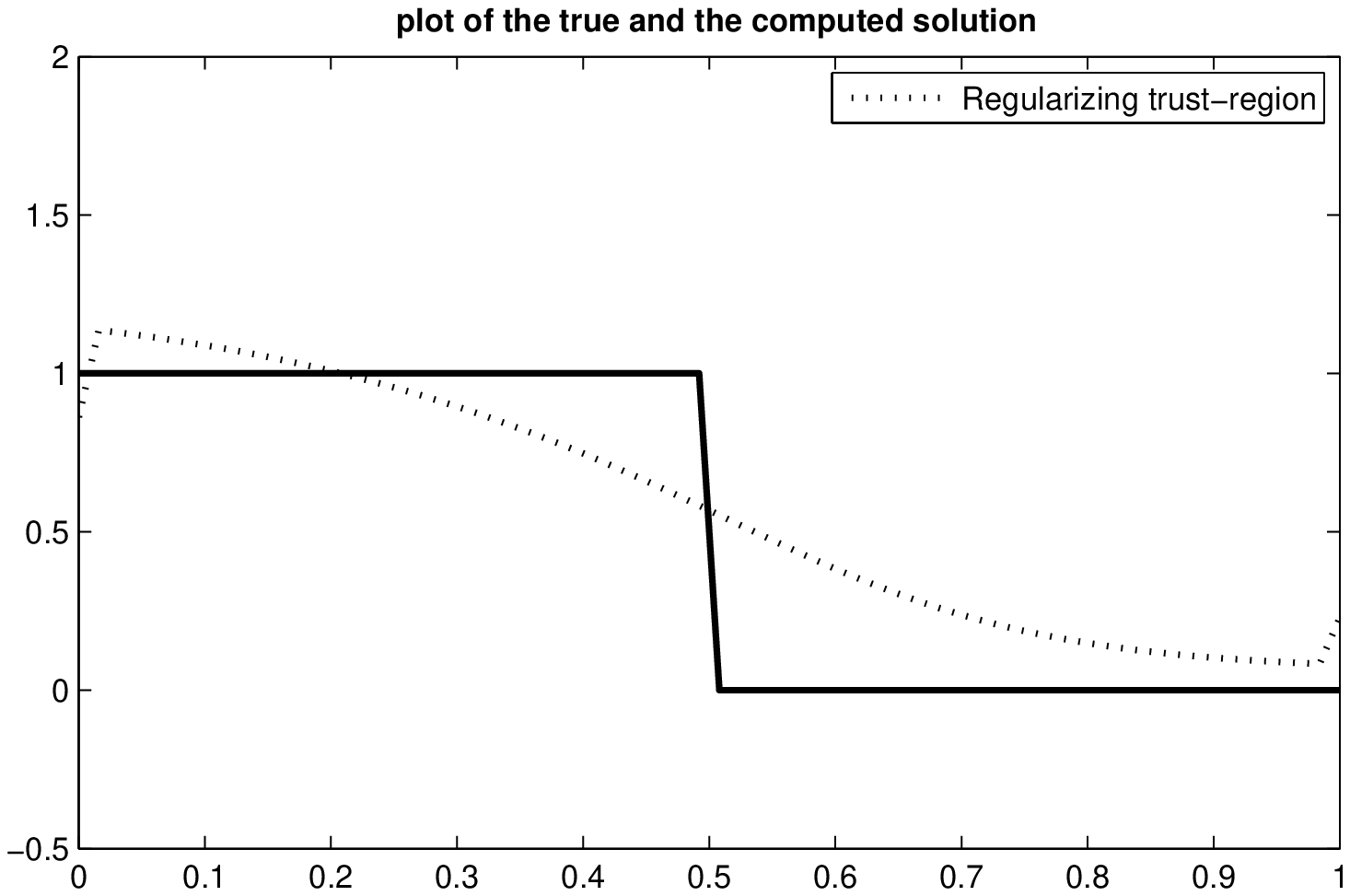}}
  	\subfigure[]{\includegraphics[width=2.4in,height=2in]{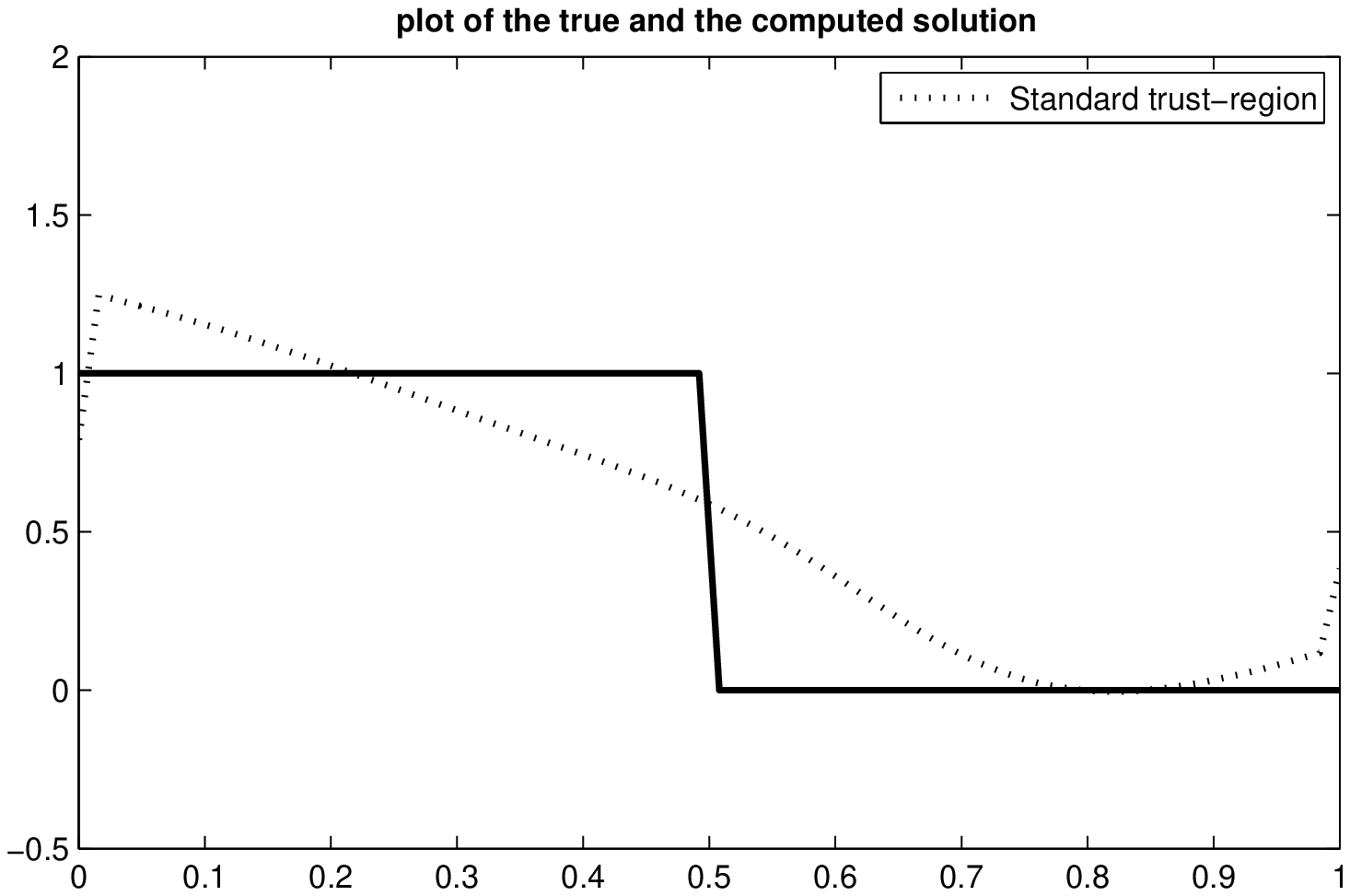}}
  	\caption{True solution (solid line) and approximate solutions (dotted line) computed by the regularizing  trust-region method (on the left) and the standard trust-region method (on the right). (a)-(b) problem P1,  $\delta=10^{-2}, x_0=0e$; (c)-(d) problem P2, $\delta=10^{-2}, x_0=0e$; (e)-(f) problem P3, $\delta=10^{-2}$, $x_0=x_0(1.25)$; (g)-(h) problem P4, $\delta=10^{-2}$, $x_0=x_0(0.5,0)$.}
  	
\label{fig:figure8}
  \end{figure}

We conclude this section considering the standard trust-region strategy.
It is well known that the standard  updating rule promotes
the use of inactive trust-regions, at least in the late stage of the procedure.
Clearly, this can adversely affect the solution of our test problems
and our experiments confirmed this fact. In particular, for $\delta=10^{-2}$ and 
 problems P1 and P2, the sequences computed by the standard trust-region method approach  solutions of the noisy problem. The same behaviour occurs in  most of the runs with P1 and P2 and noise level  $\delta=10^{-4}$. Conversely, the approximations provided by the regularizing trust-region procedure are  accurate approximations of true solutions in all the tests. 
Moreover, the approximations computed by the standard trust-region applied to problems P3 and P4   are less accurate than those computed by the regularizing trust-region although they do not show the  strong oscillatory behavior arising in problems P1 and P2. In problem P4, this behavior is evident when the second, third and fourth starting guesses are used, while the approximation computed starting from the first initial guess is as accurate as the one computed by the regularizing trust-region. This good result of the standard trust-region on problem P4 with $x_0=x_0(1,1)$ is  due to the fact that the trust-region is active in all iterations and therefore a regularizing behaviour is implicitely provided. As an example in Figure \ref{fig:figure8} we compare some solution approximations computed by the regularizing trust-region (left) and by the standard trust-region (right)  with  $\delta=10^{-2}$  applied to  problem P1  (figures (a)-(b)), P2 (figures (c)-(d)), P3 (figures (e)-(f)) and P4 (figures  (g)-(h)). 

\section{Conclusions}
 We have presented a trust-region  method for nonlinear ill-posed systems, possibly  with noisy data, where the regularizing behaviour is guaranteed by a suitable choice of the trust-region radius.
The proposed approach  shares the same local convergence properties as the regularizing Levenberg-Marquardt method
proposed by Hanke  in \cite{hanke} but it is more likely to satisfy th discrepancy principle irrespective of  the closeness of the initial guess  to 
a solution of (\ref{prob}).
The numerical experience presented confirms the effectiveness of the
trust-region radius adopted and the regularizing properties of  the resulting trust-region method. It also enlights that the new approach  is less sensitive than the  regularizing Levenberg-Marquardt method to the choice of the parameter $q$ in (\ref{seculare_q}) and (\ref{RQ}). Finally, as expected the numerical results show that the solution of the noisy problems may be misinterpreted by the  standard trust-region method.

\end{document}